\newdimen\bibspace
\renewenvironment{thebibliography}[1]{%
 \section*{\refname 
       \@mkboth{\MakeUppercase\refname}{\MakeUppercase\refname}}%
     \list{\@biblabel{\@arabic\c@enumiv}}%
          {\settowidth\labelwidth{\@biblabel{#1}}%
           \leftmargin\labelwidth
           \advance\leftmargin\labelsep
           \itemsep\bibspace
           \parsep\z@skip     %
           \@openbib@code
           \usecounter{enumiv}%
           \let\p@enumiv\@empty
           \renewcommand\theenumiv{\@arabic\c@enumiv}}%
     \sloppy\clubpenalty4000\widowpenalty4000%
     \sfcode`\.\@m}
    {\def\@noitemerr
      {\@latex@warning{Empty `thebibliography' environment}}%
     \endlist}
\newtheorem{thm}{Theorem}[section]
\newtheorem{lem}{Lemma}[section]
\newtheorem{prop}{Proposition}[section]
\newtheorem{cor}{Corollary}[section]
\newtheorem{rem}{Remark}[section]
\theoremstyle{definition}
\newtheorem*{notations*}{Notations}
\def\Xint#1{\mathchoice
  {\XXint\displaystyle\textstyle{#1}}%
  {\XXint\textstyle\scriptstyle{#1}}%
  {\XXint\scriptstyle\scriptscriptstyle{#1}}%
  {\XXint\scriptscriptstyle\scriptscriptstyle{#1}}%
  \!\int}
\def\XXint#1#2#3{{\setbox0=\hbox{$#1{#2#3}{\int}$}
  \vcenter{\hbox{$#2#3$}}\kern-.5\wd0}}
\def\dashint{\Xint-}
\newcommand{\al}{\alpha}                \newcommand{\lda}{\lambda}
\newcommand{\om}{\Omega}                \newcommand{\pa}{\partial}
\newcommand{\va}{\varepsilon}           \newcommand{\ud}{\mathrm{d}}
\newcommand{\be}{\begin{equation}}      \newcommand{\ee}{\end{equation}}
\newcommand{\Lda}{\Lambda}              \newcommand{\B}{\mathcal{B}}
\newcommand{\R}{\mathbb{R}}
\newcommand{\dlim}{\displaystyle\lim}
\title{\textbf{Sharp constants in weighted trace inequalities on Riemannian manifolds}}
\author{\medskip Tianling Jin and Jingang Xiong}
\begin{document}

\maketitle

\begin{abstract}
We establish some sharp weighted trace inequalities $W^{1,2}(\rho^{1-2\sigma}, M)\hookrightarrow L^{\frac{2n}{n-2\sigma}}(\pa M)$
on $n+1$ dimensional compact smooth manifolds with smooth boundaries, where $\rho$ is a defining function of $M$ and $\sigma\in (0,1)$. This is stimulated by some recent work on fractional (conformal) Laplacians and related problems in conformal geometry, and also motivated by a conjecture of Aubin.
\end{abstract}

\section{Introduction}

Let $\om$ be an open set in $\R^n$, $n\geq 1$,  and $\rho(x)=\mathrm{dist}(x,\pa \om)$ for $x\in \om$.   There have been much work devoted to the structures of weighted
Sobolev spaces of the type $W^{k,p}(\rho^{\al}, \om)$ where $\al\in \R$, $k\in \mathbb{N}$ and $1\leq p\leq \infty$, 
as well as to their applications in different areas such as (stochastic) partial differential equations and Riemannian manifolds with fractal boundaries or boundary singularities. We refer to the book \cite{Maz} of Maz'ya and references therein for these topics. 

In this paper, we would like to study sharp constants in weighted trace type inequalities $W^{1,2}(\rho^{1-2\sigma})\hookrightarrow L^{\frac{2n}{n-2\sigma}}(\pa M)$ on Riemannian manifolds $M$ with boundaries $\pa M$. Let us start from Euclidean spaces. Denote $\dot{H}^{\sigma}(\R^n)$ as the $\sigma$-order homogeneous Sobolev space on $\R^n$, $n\geq 2$, which is the closure of $C^\infty_c(\R^n)$ under the norm
\[
\|f\|_{\dot{H}^{\sigma}(\R^n)}=\left(\int_{\R^n}|(-\Delta)^{\sigma/2}f(x)|^2\,\ud x\right)^{1/2}.
\]
The sharp $\sigma$-order Sobolev inequality asserts that 
\[
 \|f\|^2_{L^{\frac{2n}{n-2\sigma}}(\R^n)}\leq c(n,\sigma)\|f\|^2_{\dot{H}^{\sigma}(\R^n)}
\]
for all $f\in  \dot{H}^{\sigma}(\R^n)$, where
\[
 c(n,\sigma)=2^{-2\sigma}\pi^{-\sigma}\left(\frac{\Gamma((n-2\sigma)/2)}{\Gamma((n+2\sigma)/2)}\right)
\left(\frac{\Gamma(n)}{\Gamma(n/2)}\right)^{\frac{2\sigma}{n}},
\]
and the equality holds if and only if $f(x)$ takes the form
\[
 c\left(\frac{\lda}{1+\lda^2|x-x_0|^2}\right)^{\frac{n-2\sigma}{2}}
\]
for some $c\in \R$, $\lda >0$ and $x_0\in \R^n$. These have been proved by Lieb in \cite{Lie83}. Set $x=(x',x_{n+1})\in \R^{n+1}_+:= \R^n\times (0,\infty)$ and
\[
 F(x',x_{n+1})=\int_{\R^n}\mathcal{P}_{\sigma}(x'-\xi, x_{n+1})f(\xi)\,\ud \xi,
\]
where
\be\label{eq:poisson}
 \mathcal{P}_{\sigma}(x', x_{n+1})=\beta(n,\sigma)\frac{x_{n+1}^{2\sigma}}{(|x'|^2+x_{n+1}^2)^{\frac{n+2\sigma}{2}}}
\ee
with the normalization constant $\beta(n,\sigma)>0$ such that $\int_{\R^n} \mathcal{P}_{\sigma}(x', 1)\,\ud x'=1$.
Then one has (see, e.g., \cite{CS})
\[
N_\sigma  \int_{\R^{n+1}_+}x_{n+1}^{1-2\sigma}|\nabla F(x',x_{n+1})|^2\,\ud x=\|f\|^2_{\dot{H}^{\sigma}(\R^n)},
\]
where $N_\sigma=2^{2\sigma-1}\Gamma(\sigma)/\Gamma(1-\sigma)$. Hence, we have
\be\label{sharp trace-1}
 \|f\|^2_{L^{\frac{2n}{n-2\sigma}}(\R^n)}\leq S(n,\sigma)\int_{\R^{n+1}_+}x_{n+1}^{1-2\sigma}|\nabla F(x',x_{n+1})|^2\,\ud x
 \ee
for all $f\in  \dot{H}^{\sigma}(\R^n)$, where $S(n,\sigma)=N_\sigma\cdot c(n,\sigma)$.
Consequently, one can show (see, e.g., Proposition \ref{prop: 1} below together with a density argument) that
\be\label{sharp trace}
 \|U(\cdot, 0)\|^2_{L^{\frac{2n}{n-2\sigma}}(\R^n)}\leq S(n,\sigma)\int_{\R^{n+1}_+}x_{n+1}^{1-2\sigma}|\nabla U(x',x_{n+1})|^2\,\ud x
 \ee
for all $U\in  W^{1,2}(x_{n+1}^{1-2\sigma},\R_+^{n+1})$, which is the closure of $C_c^{\infty}(\overline \R^{n+1}_+)$ under the norm
\[
 \|U\|_{W^{1,2}(x_{n+1}^{1-2\sigma}, \R_+^{n+1})}=\sqrt{\int_{\R^{n+1}_+}x_{n+1}^{1-2\sigma}(|U|^2+|\nabla U|^2)\,\ud x}.
\]

Stimulated by several recent work on fractional (conformal) Laplacians and related problems in conformal geometry (see, e.g., \cite{GZ, CG, GQ, JLX1}) and a conjecture of Aubin \cite{Aubin76}, we study weighted Sobolev trace inequalities of type \eqref{sharp trace} on Riemannian manifolds with boundaries. 
For $n\geq 2$,  let $(M,g)$ be an $n+1$ dimensional, compact, smooth Riemannian manifold with smooth boundary $\pa M$.
We say a function $\rho\in C^\infty(\overline{M})$ is a \emph{defining function} of $M$ if
\[
 \rho>0 \quad \mbox{in }M, \quad \rho=0 \mbox{ and } \nabla_g \rho \neq 0\quad \mbox{on }\pa M.
\]
Since $\rho^{1-2\sigma}$, where $\sigma\in (0,1)$ is a constant, belongs to the
Muckenhoupt $A_2$ class, we define the weighted Sobolev space $H^{1}(\rho^{1-2\sigma}, M)$ as the closure of $C^\infty(\overline{M})$ under the norm
\[
 \|u\|_{H^{1}(\rho^{1-2\sigma}, M)}=\left(\int_{M}\rho^{1-2\sigma}(|u|^2+|\nabla u|^2)\,\ud v_g\right)^{\frac 12},
\]
where $\ud v_g$ denote the volume form of $(M,g)$. $H^{1}(\rho^{1-2\sigma}, M)$ is a Hilbert space and it has a well-defined
\emph{trace operator} $T$ (see, e.g., \cite{Maz} or \cite{N}) which continuously maps  $H^1(\rho^{1-2\sigma}, M)$ to $H^\sigma(\pa M)$, where $H^\sigma(\pa M)$ is the
$\sigma$-order Sobolev space on $\pa M$.

\begin{thm}\label{thm: main thm A}
For $n\geq 2$,  let $(M,g)$ be an $n+1$ dimensional, compact, smooth Riemannian manifold with smooth boundary $\pa M$.
Let $\sigma\in (0,\frac{1}{2}]$, and $\rho$ be a defining function of
$M$ satisfying $|\nabla_g \rho|=1$ on $\pa M$. Then there exists a positive constant $A=A(M,g,n,\rho,\sigma)$ such that
\be\label{main ineq}
\left(\int_{\pa M}|u|^{\frac{2n}{n-2\sigma}}\,\ud s_g\right)^\frac{n-2\sigma}{n}\leq S(n,\sigma)\int_{M}
\rho^{1-2\sigma}|\nabla_g u|^2\,\ud v_g+A \int_{\pa M}u^2\,\ud s_g,
\ee
for all $u\in H^1(\rho^{1-2\sigma}, M)$, where $\ud s_g$ denotes the induced volume form on $\pa M$.
\end{thm}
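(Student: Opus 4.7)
The plan is to prove \eqref{main ineq} in two stages. First, a partition-of-unity localization reduces matters to the sharp Euclidean trace inequality \eqref{sharp trace} but with constant $S(n,\sigma)+\varepsilon$ in place of $S(n,\sigma)$, for every $\varepsilon>0$; second, this $\varepsilon$ is removed by a concentration-compactness argument.

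By density (cf.~Proposition~\ref{prop: 1}) it suffices to treat $u\in C^\infty(\overline M)$. Because $|\nabla_g\rho|=1$ on $\pa M$, I cover $\pa M$ by finitely many small Fermi-coordinate neighborhoods in which $\rho$ itself is the normal coordinate and $g=d\rho^2+h_\rho$ with $h_\rho$ close to a flat metric as the chart shrinks; this is extended to a finite atlas of $M$. Fixing a smooth partition of unity $\{\eta_i^2\}$ with $\sum_i\eta_i^2\equiv 1$ and setting $p=2n/(n-2\sigma)\ge 2$, the triangle inequality in $L^{p/2}(\pa M)$ gives
\[
\|u\|_{L^p(\pa M)}^2 \le \sum_i \|\eta_i u\|_{L^p(\pa M)}^2,
\]
with only boundary patches contributing. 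On each such patch, pulling $\eta_\alpha u$ back to $\R^{n+1}_+$ through the Fermi chart and applying \eqref{sharp trace} while tracking the metric and volume-form distortion yields
\[
\|\eta_\alpha u\|_{L^p(\pa M)}^2 \le S(n,\sigma)(1+\theta_\alpha)\int_M \rho^{1-2\sigma}|\nabla_g(\eta_\alpha u)|^2\,\ud v_g,
\]
with $\theta_\alpha\to 0$ as the chart shrinks. Summing and expanding $\sum_i|\nabla(\eta_i u)|^2 = |\nabla u|^2 + u^2\sum_i|\nabla\eta_i|^2$ (the cross term vanishes since $\nabla\sum_i\eta_i^2\equiv 0$) produces a lower-order bulk error proportional to $\int_M \rho^{1-2\sigma}u^2\,\ud v_g$.

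Converting this bulk error into the required boundary term $\int_{\pa M}u^2$ is a subtle point that exploits the vanishing of the weight at $\pa M$ (the restriction $\sigma\le 1/2$ enters here). On a tubular neighborhood $\{\rho<\delta\}$ the fundamental theorem of calculus $u(\rho,y')=u(0,y')+\int_0^\rho \pa_\nu u\,dt$ combined with the weighted Cauchy--Schwarz bound $(\int_0^\rho \pa_\nu u\,dt)^2 \le \frac{\rho^{2\sigma}}{2\sigma}\int_0^\rho t^{1-2\sigma}(\pa_\nu u)^2\,dt$ yields, after integration in $\rho$ against $\rho^{1-2\sigma}$ and Fubini,
\[
\int_{\rho<\delta}\rho^{1-2\sigma}u^2\,\ud v_g \le C\delta^{2-2\sigma}\int_{\pa M}u^2\,\ud s_g + C\delta^2\int_M \rho^{1-2\sigma}|\nabla_g u|^2\,\ud v_g,
\]
while on $\{\rho\ge\delta\}$ the weight is bounded away from zero and an ordinary Poincar\'e-type inequality with trace correction handles the remainder. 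Choosing $\delta$ small to absorb gradient errors into the main term, I obtain the weak version: for every $\varepsilon>0$ there exists $A_\varepsilon$ with
\[
\|u\|_{L^p(\pa M)}^2 \le (S(n,\sigma)+\varepsilon)\int_M \rho^{1-2\sigma}|\nabla_g u|^2\,\ud v_g + A_\varepsilon\int_{\pa M}u^2\,\ud s_g.
\]

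The main obstacle is removing the $\varepsilon$. I would argue by contradiction: if no single $A$ suffices for the sharp constant $S(n,\sigma)$, one can extract $u_k$ with $\|u_k\|_{L^p(\pa M)}=1$, $\int_{\pa M}u_k^2\to 0$, and $S(n,\sigma)\int_M \rho^{1-2\sigma}|\nabla_g u_k|^2\to 1$. Combined with the weak version, this forces the boundary measures $|u_k|^p\,\ud s_g$ to concentrate at a single point $p_0\in \pa M$. Rescaling $u_k$ in Fermi coordinates about $p_0$ at the concentration scale $\lda_k\to 0$ produces a nontrivial limit $U$ on $\R^{n+1}_+$ which, after normalization, attains equality in \eqref{sharp trace}; by Lieb's classification $U$ has the explicit bubble form. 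A careful expansion of the curved quantities on $M$ against the Euclidean ones for $U$, keeping track of the first non-vanishing correction in powers of $\lda_k$ arising from the metric and the weight, is expected to contradict the assumed failure of the sharp inequality. This localized blow-up analysis, in particular verifying that the sign of the next-order correction on the right-hand side of \eqref{main ineq} forces the sharp form, is the technical heart of the argument.
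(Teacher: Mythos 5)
Your overall skeleton matches the paper's strategy: establish a weak version of \eqref{main ineq} with constant $S(n,\sigma)+\varepsilon$ and then remove the $\varepsilon$ by a blow-up/concentration argument. The first stage (partition of unity, bulk-to-boundary conversion) is reasonable, though the paper routes the bulk error through the compact embedding of Corollary~\ref{SSE on manifold} rather than the direct Cauchy--Schwarz computation you sketch; both should work. However, your parenthetical remark that ``the restriction $\sigma\le 1/2$ enters here'' is incorrect: your bulk-to-boundary estimate (and indeed the paper's Proposition~\ref{prop: weak version ineq}) is valid for all $\sigma\in(0,1)$. The hypothesis $\sigma\le 1/2$ is not needed at this stage.

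The genuine gap is in the second stage, which you leave as ``a careful expansion \dots is expected to contradict the assumed failure.'' This is the entire technical content of Sections~\ref{sec: asymp anal} and~\ref{sec: proof of main thm}. In the paper, the contradiction is not obtained by Taylor-expanding the functional against the bubble; it is obtained by (i) passing from the contradictory hypothesis to a family of minimizers $u_\al$ of the $\al$-parametrized functionals $I_\al$, which satisfy the Euler--Lagrange system \eqref{E-L eqn}; (ii) proving sharp pointwise decay of $u_\al$ on $\pa M$ (Proposition~\ref{prop: up bound on the boundary}) via Moser iteration against a Neumann function constructed in Theorem~\ref{thm:nb2}, then transferring that decay into the interior with a Harnack inequality and obtaining Schauder-type gradient bounds (Propositions~\ref{prop: interior upbound}, \ref{prop: iupb}); and (iii) running a Pohozaev identity on $\mathcal{B}^+_{R_\al}$ with the carefully chosen radius $R_\al=(\al^{1/2\sigma}\mu_\al)^{-1}$, which isolates a positive term $\al\mu_\al^{2\sigma}\int_{\pa'\mathcal{B}^+_{R_\al}}v_\al^2$ and shows every error term is $o(\al\mu_\al^{2\sigma})$. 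None of this machinery is present in your sketch: you do not introduce the minimizers or the Euler--Lagrange equation (so there is nothing to apply Pohozaev to), and you do not have the decay estimates needed to estimate the error terms. Moreover, the reason $\sigma\le\frac12$ is needed is precisely in step (iii): one needs $n-4\sigma>0$ so that $R_\al^{4\sigma-n}\to 0$, and one needs $\mu_\al^{1-2\sigma}$ bounded so that the $O(\mu_\al)$ metric and weight errors are $o(\al\mu_\al^{2\sigma})$; for $\sigma>\frac12$ the boundary mean curvature produces a first-order correction that can have the wrong sign (cf.\ the remark after Theorem~\ref{thm: main thm B}), which is why Theorem~\ref{thm: main thm B} imposes $\pa M$ totally geodesic and a special $\rho$. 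Your claim that ``the sign of the next-order correction \dots forces the sharp form'' is false without these geometric restrictions, and would need a real argument even with them.
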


 For $\sigma\in (\frac{1}{2},1)$, we have

\begin{thm}\label{thm: main thm B} Let $\sigma\in (\frac{1}{2},1)$, $n\geq 4$ and $(M,g)$ be an $n+1$ 
dimensional, compact, smooth Riemannian manifold with smooth boundary $\pa M$. Suppose in addition that $\pa M$ is totally geodesic. 
Let $\rho$ be a defining function of
$M$ satisfying $\rho(x)=d(x)+O(d(x)^3)$ as $d(x)\to 0$, where $d(x)$ denotes the distance between $x$ and $\pa M$ with respect to the metric $g$.
Then there exists a positive constant $A=A(M,g,n,\rho,\sigma)$
such that \eqref{main ineq} holds for all $u\in H^1(\rho^{1-2\sigma}, M)$.
\end{thm}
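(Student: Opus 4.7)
The plan is to follow the same scheme that gives Theorem~\ref{thm: main thm A}: localize via a partition of unity on $\overline{M}$, reduce each boundary piece to the sharp Euclidean inequality \eqref{sharp trace} by working in adapted coordinates, and absorb the errors coming from the geometry into the boundary $L^2$ term. The additional work caused by $\sigma\in(\tfrac12,1)$ is that the weight $\rho^{1-2\sigma}$ is now singular (the exponent is negative), so any linear-in-$t$ error in the geometric expansions would generate terms that are not controllable by $\int_{\pa M}u^2\,\ud s_g$; this forces the totally geodesic hypothesis and the sharpened condition $\rho=d+O(d^3)$.

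More concretely, cover $\overline{M}$ by finitely many coordinate balls of radius $r>0$. In an interior chart the weight $\rho^{1-2\sigma}$ is bounded and the inequality follows from the classical Sobolev trace embedding, the deficit being absorbed into the $L^2(\pa M)$ term. Near a boundary point $p\in\pa M$ I would use Fermi coordinates $(y,t)$ with $t=d(\cdot,\pa M)$ and $y$ geodesic normal coordinates on $\pa M$ centered at $p$, so that $g=\ud t^2+h_{ij}(y,t)\,\ud y^i\ud y^j$. The totally geodesic assumption gives $\pa_t h_{ij}|_{t=0}=0$, and combined with $\rho=d+O(d^3)$ this yields
\[
h_{ij}(y,t)=\delta_{ij}+O(|y|^2+t^2),\qquad \rho^{1-2\sigma}=t^{1-2\sigma}\bigl(1+O(t^2)\bigr),
\]
and consequently
\[
\rho^{1-2\sigma}\ud v_g=t^{1-2\sigma}\bigl(1+O(|y|^2+t^2)\bigr)\,\ud y\,\ud t,\qquad \ud s_g=\bigl(1+O(|y|^2)\bigr)\,\ud y.
\]
The crucial point is that the correction is \emph{quadratic}: a linear $O(t)$ term, once multiplied by $t^{1-2\sigma}$ and integrated, would produce contributions of the same order as the boundary $L^2$ norm but of indefinite sign, which is why both hypotheses of Theorem~\ref{thm: main thm B} are essential.

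Using these expansions and applying \eqref{sharp trace} chart by chart, for any $\va>0$ one can choose $r=r(\va)$ small enough so that for functions supported in a single boundary chart
\[
\Big(\int_{\pa M}|u|^{\frac{2n}{n-2\sigma}}\ud s_g\Big)^{\frac{n-2\sigma}{n}}\le (S(n,\sigma)+\va)\int_M\rho^{1-2\sigma}|\nabla_g u|^2\,\ud v_g+C(\va)\int_{\pa M}u^2\,\ud s_g.
\]
Summing over the partition and handling the cross terms $|\nabla(\eta u)|^2-\eta^2|\nabla u|^2$ by standard manipulations (absorbed once more into the $L^2(\pa M)$ term), the same inequality holds globally on $M$ with $S(n,\sigma)+\va$ in front. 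To upgrade to the sharp constant $S(n,\sigma)$ (i.e.\ to take $\va=0$), I would argue by contradiction: assuming no $A$ worked, extract a sequence $u_k\in H^1(\rho^{1-2\sigma},M)$ with $\|u_k\|_{L^{2n/(n-2\sigma)}(\pa M)}=1$, $\int_{\pa M}u_k^2\,\ud s_g\to 0$, and $S(n,\sigma)\int_M\rho^{1-2\sigma}|\nabla_g u_k|^2\,\ud v_g\to 1$. Concentration-compactness on $\pa M$ forces $u_k$ to concentrate at a single point $p_0\in\pa M$; rescaling by the natural factor $\lda_k\to\infty$ in Fermi coordinates centered at $p_0$ and passing to the limit, one recovers a nontrivial $V\in W^{1,2}(x_{n+1}^{1-2\sigma},\R^{n+1}_+)$ that saturates the Euclidean inequality, and a careful expansion of the next-order terms, using the hypotheses of Theorem~\ref{thm: main thm B} once more to ensure that the curvature corrections vanish in the limit, yields the desired contradiction.

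The hardest step will be the rescaling argument. One must prove that $u_k$ concentrates at a single \emph{boundary} point (not an interior one, which is essentially ruled out by the constraint $\int_{\pa M}u_k^2\to 0$ combined with compactness of the trace operator), establish sufficient compactness in the weighted Sobolev space $W^{1,2}(x_{n+1}^{1-2\sigma},\R^{n+1}_+)$ to extract a nontrivial limit, and---most delicately---verify that the $O(\lda_k^{-2})$ curvature corrections produce an $o(1)$ contribution after rescaling. Without the totally geodesic assumption and the $O(d^3)$ flatness of $\rho$, these error terms would survive the rescaling and destroy the argument; their vanishing in the limit is precisely the geometric content of the hypotheses of Theorem~\ref{thm: main thm B}.
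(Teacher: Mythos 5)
Your high-level outline matches the paper's Li--Zhu scheme: establish the $(S+\va)$ inequality, set up a contradiction argument, locate concentration at a boundary point, rescale in Fermi coordinates, and exploit the hypotheses (totally geodesic boundary, $\rho=d+O(d^3)$) to show that the geometric errors are quadratic in $t$. Your observation about why both hypotheses are needed---that a linear-in-$t$ error, once multiplied by the singular weight $t^{1-2\sigma}$, cannot be absorbed into $\int_{\pa M}u^2$---is the right heuristic. But the proposal leaves out the actual mechanism that produces a contradiction, and what is left out is not a loose end; it is the mathematical core of the proof.

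First, the sequence $u_k$ you extract does not solve any equation. The paper deliberately works with the subcritical functionals
$I_\al[u]=\big(\int\rho^{1-2\sigma}|\nabla_g u|^2+\al\int_{\pa M}u^2\big)/\big(\int_{\pa M}|u|^q\big)^{2/q}$
and shows (Propositions \ref{prop: existence of mini}, \ref{prop: weak version ineq}) that under the contradiction hypothesis each $I_\al$ has a nonnegative minimizer $u_\al$ satisfying the Euler--Lagrange equation \eqref{E-L eqn}, whose boundary condition explicitly contains the term $-\al u_\al$. That $\al$-term is precisely what is tested against by the Pohozaev identity \eqref{dp3}--\eqref{dp6}: one obtains
$\al\mu_\al^{2\sigma}\int_{\pa'\B^+_{R_\al}}v_\al^2\,\ud x'\le C\mu_\al^2+C(\al\mu_\al^{2\sigma})^{(n-2\sigma)/2\sigma}+C\al\mu_\al^{2\sigma}R_\al^{4\sigma-n}$,
and dividing by $\al\mu_\al^{2\sigma}$ shows $\int v_\al^2\to 0$, contradicting the lower bound coming from the Liouville profile $v$. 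Merely rescaling and ``passing to the limit'' gives an extremizer $V$ for the Euclidean inequality, which is fully consistent rather than contradictory; the contradiction only appears once you have a Pohozaev-type balance to violate, and in your framework there is no equation, hence no Pohozaev identity.

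Second, even granting the equation, you need pointwise decay of $v_\al$ on a dilating domain $\B^+_{R_\al}$ to control the $\pa''\B^+_{R_\al}$ boundary terms in the Pohozaev identity. In the paper this is the content of Proposition \ref{prop: up bound on the boundary}, Corollary \ref{cor: up bound on the boundary} and Proposition \ref{prop: interior upbound}, and their proof requires the Neumann function of the degenerate operator (Theorem \ref{thm:neumann}, built via Lemmas \ref{lem:nb1}--\ref{lem:nb3} and Theorem \ref{thm:nb2}), the weighted Moser iteration of Theorem \ref{thm: A1}, and the Harnack inequality from \cite{CaS}/\cite{TX} to pass from the boundary decay to the interior decay. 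None of these appear in your sketch; the phrase ``establish sufficient compactness in $W^{1,2}(x_{n+1}^{1-2\sigma},\R^{n+1}_+)$'' papers over exactly this machinery. Finally, the restriction $n\ge 4$ in Theorem \ref{thm: main thm B} is not a side effect of compactness: it is the condition $n>2+2\sigma$ needed so that the $\mu_\al^2$-error and the boundary term $R_\al^{4\sigma-n}$ in the Pohozaev inequality actually tend to zero. Your proposal never explains where this dimensional threshold comes from, which is a sign that the step supposed to yield the contradiction has not really been carried out.
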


\begin{rem}
 The constant $S(n,\sigma)$ in \eqref{main ineq} is optimal for all $\sigma\in (0,1)$, see Proposition \ref{prop: mini const}.
\end{rem}

\begin{rem}
Theorem \ref{thm: main thm B} may fail without any geometric assumption on $\pa M$. For example, it is the case when the mean curvature of $\pa M$ is positive somewhere. In particular, \eqref{main ineq} is false on any bounded smooth domain in $\R^{n+1}$ when $\sigma\in (1/2,1)$. However, Theorem \ref{thm: main thm A} holds for all $\sigma\in (0,1)$ if $S(n,\sigma)$ is replaced by any $S>S(n,\sigma)$, see Proposition \ref{prop: weak version ineq}.
\end{rem}

\begin{rem}
It is clear that we only need to consider the case when $M$ is connected.
Throughout the paper, we assume this.
\end{rem}

When $\sigma=\frac{1}{2}$, \eqref{main ineq} is a standard Sobolev trace inequality
which has been extensively studied, see, e.g.,  Lions \cite{Lions85}, Escobar \cite{Escobar88},
Beckner \cite{Be}, Adimurthi-Yadava \cite{AY}, Li-Zhu \cite{LZ2, LZ1} and many others.
In particular, Li-Zhu \cite{LZ2} established Theorem \ref{thm: main thm A} for $\sigma =\frac{1}{2}$.
The sharp inequality \eqref{main ineq} is in the same spirit of a conjecture posed by Aubin \cite{Aubin76}
which concerns the best constants in Sobolev embedding theorems on Riemannian manifolds. Aubin's
conjecture had been confirmed through the work of  Hebey-Vaugon \cite{HV}, Aubin-Li \cite{AL} and Druet \cite{Druet99, Druet02}.
Besides, various refinements of Aubin's conjecture were obtained in Druet-Hebey \cite{DH}, Li-Ricciardi \cite{LR} and etc. These sharp Sobolev type inequalities play important roles in the study of nonlinear partial differential equations,
see Aubin \cite{Aubin98}, Hebey \cite{H}, Schoen-Yau \cite{SY} and references therein.

For the defining function in the above theorems, $(M,g/\rho^2)$ is \emph{asymptotically hyperbolic} in the sense that
$(M,g/\rho^2)$ is a complete manifold and along any
smooth curve in $M\setminus \pa M$ tending to a point $\xi \in \pa M$
all sectional curvatures of $g/\rho^2$ approach to $-1$ (see Mazzeo \cite{M88} or Mazzeo-Melrose \cite{MM87}).
On the conformal infinity $(\pa M, [g|_{\pa M}])$ of $(M,g/\rho^2)$, one can define fractional order conformally invariant
operators $P_{\sigma}^g$ for $\sigma\in (0,\frac{n}{2})$ except at most finite values, via normalized scattering operators (see  Graham-Zworski \cite{GZ} and Chang-Gonz\'alez \cite{CG}), which leads to $\sigma$-scalar curvature $R_\sigma^g:=P_{\sigma}^g(1)$ on $\pa M$. A fractional Yamabe problem, which is to find a metric in $[g|_{\pa M}]$ of constant $\sigma$-curvature and related ones, have been studied by Qing-Raske \cite{QR}, Gonz\'alez-Mazzeo-Sire \cite{GMS} and Gonz\'alez-Qing \cite{GQ}. When $\sigma\in (0,1)$, it can be formulated (see \cite{GQ}) as seeking minimizers of the energy functional
\be \label{vp}
I^\sigma[u]=\frac{N_\sigma \int_{M}\rho^{1-2\sigma}|\nabla u|^2\,\ud v_g+\int_{\pa M} R_\sigma^gu^2\,\ud s_g}{\big(\int_{\pa M} |u|^\frac{2n}{n-2\sigma}\,\ud s_g\big)^{\frac{n-2\sigma}{n}}}, \quad u\in H^1(\rho^{1-2\sigma}, M)\ ,\ u\not\equiv 0\ \mbox{on}\ \pa M,
\ee
for some proper $\rho$. For $\sigma=1/2$, it is the energy functional of a Yamabe problem with boundary initially studied by Escobar \cite{E2}.
A fractional Nirenberg problem about prescribing $\sigma$-scalar curvature on $\mathbb{S}^n$  has been studied by  Jin-Li-Xiong \cite{JLX1,JLX2} and a fractional Yamabe flow has been studied by Jin-Xiong \cite{JX}. Variational problems related to energy functional \eqref{vp} on bounded domains in Euclidean spaces have been studied by Gonz\'alez \cite{G}, Palatucci-Sire \cite{PS}.

Finally, we provide a brief sketch of the proofs of the two main theorems. Since the right hand side of \eqref{main ineq} does not contain terms like
$\int_{M}\rho^{1-2\sigma} u^2\,\ud v_g$, we adapt a global argument from Li-Zhu \cite{LZ2,LZ1}.
By contradiction, we assume that for any $\al >0$,
\[
 I_\al :=\frac{\int_{M}\rho^{1-2\sigma}|\nabla_g u|^2\,\ud v_g+\al\int_{\pa M}|u|^2\,\ud s_g}{\big(\int_{\pa M} |u|^\frac{2n}{n-2\sigma}\,\ud s_g\big)^{\frac{n-2\sigma}{n}}}<\frac{1}{S(n,\sigma)},
\]
for some $u\in H^1(\rho^{1-2\sigma},  M)$ with that $u\not\equiv 0$ on $\pa M$. It follows that there exists a minimizer $u_\al$ of $I_\al$, and
$u_\al$ blows up at exactly one point as $\al \to \infty$. One key step is the asymptotical analysis of $u_\al$ near its blow up point.
Here we have to overcome difficulties from the degeneracy and the lack of conformal invariance of the Euler-Lagrange equation of $I_\al$ satisfied by $u_\al$.
Another difference from \cite{LZ2} (the case $\sigma=1/2$) is that some Sobolev embedding theorems for $H^1(\rho^{1-2\sigma}, M)$, which play important roles in establishing the blow-up profile of $u_\al$ in the interior of $M$ in \cite{LZ2} in the case $\sigma=\frac 12$, fail when $\sigma>\frac 12$ (see, e.g., Theorem 1 in page 135 or Corollary 2 in page 193 of \cite{Maz}) .  However, we succeeded in establishing the optimal asymptotical behavior of $u_\al$ on the boundary $\pa M$ (Proposition \ref{prop: up bound on the boundary}).
In this step, a Liouville type theorem in Jin-Li-Xiong \cite{JLX1} and  \emph{Neumann functions} for degenerate equations in Theorem \ref{thm:neumann} are used.
The last step is to derive a contradiction by checking balance via a Pohozaev type inequality in some proper region, where a Harnack inequality established by Cabre-Sire \cite{CaS} or Tan-Xiong \cite{TX} is used
to obtain the asymptotical behavior of $u_\al$ near it blowup point in $M$ from that on $\pa M$. Some extra arguments on $\pa M$ are needed for $\sigma >\frac{1}{2}$. 

\begin{thm}\label{thm:neumann} Let $f\in L^1(\pa M)$ with mean value zero, i.e., $\int_{\pa M} f=0$. Then there exists a weak solution $u\in W^{1,1+\va_0}(\rho^{1-2\sigma},M)$ of \eqref{nb1} where $\va_0>0$ depending only on $n$ and $\sigma$. Consequently, if $f=\delta_{x_0}-\frac{1}{|\pa M|_g}$ for some $x_0\in \pa M$, where $\delta_{x_0}$ is the delta function at $x_0$ and $|\pa M|_g$ is the area of $\pa M$ with respect to the induced metric $g$, then there exists a weak solution $u\in W^{1,1+\va_0}(\rho^{1-2\sigma},M)\cap H^1_{loc}(\rho^{1-2\sigma},\overline M\setminus\{x_0\})$ of \eqref{nb1} with mean value zero. Moreover, for all $x\in \overline{M}\backslash\{x_0\}$,
\[
 A_1 \mathrm{dist}_g(x, x_0)^{2\sigma-n}-A_0\leq u(x)\leq A_2 \mathrm{dist}_g(x, x_0)^{2\sigma-n}
 \]
where $A_0, A_1, A_2$ are positive constants depending only on $M, g, n, \sigma, \rho$.
\end{thm}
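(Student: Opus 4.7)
The plan is to build the Neumann function in three stages: first solve for $f\in L^2(\pa M)$ variationally, then extend to $L^1$ data by an approximation/duality argument that produces the $W^{1,1+\va_0}$ regularity, and finally concentrate the data to a Dirac mass and derive the claimed two-sided pointwise bound via comparison with the Euclidean model.

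For the first stage, on the closed subspace $\mathscr{H}=\{u\in H^1(\rho^{1-2\sigma},M):\int_{\pa M}u\,\ud s_g=0\}$ the bilinear form $\int_M \rho^{1-2\sigma}\nabla_g u\cdot\nabla_g v\,\ud v_g$ is coercive, because the Sobolev trace inequality \eqref{main ineq} combined with the mean-zero Poincaré inequality (which follows from a standard compactness argument using the compactness of the trace $H^1(\rho^{1-2\sigma},M)\hookrightarrow L^2(\pa M)$) controls $\|u\|_{H^1(\rho^{1-2\sigma},M)}$. Lax--Milgram then yields a unique weak solution for every $f\in L^{\frac{2n}{n+2\sigma}}(\pa M)$ with mean zero, and in particular for $f\in L^2(\pa M)$.

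For the second stage, I approximate $f\in L^1(\pa M)$ by $f_k\in L^2(\pa M)$ with $f_k\to f$ in $L^1$ and $\int f_k=0$, solve to get $u_k$, and extract a weak $W^{1,1+\va_0}$ limit. The uniform $W^{1,1+\va_0}$ bound is the technical core: I would combine a Stampacchia/Boccardo--Gallou\"et type truncation argument with a weighted Meyers reverse H\"older inequality for the homogeneous equation $-\mathrm{div}_g(\rho^{1-2\sigma}\nabla u)=0$. Because $\rho^{1-2\sigma}$ lies in $A_2$, the Caccioppoli and Gehring lemma machinery developed for degenerate elliptic equations with $A_2$ weights applies and produces some $\va_0=\va_0(n,\sigma)>0$ such that the gradients of bounded-data solutions lie in an $L^{2+\va_0}(\rho^{1-2\sigma})$ class; dualizing this against the $L^\infty$ truncation estimate yields the desired $W^{1,1+\va_0}$ bound independent of $k$.

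For the delta-function case I replace $\delta_{x_0}$ by $f_k=\chi_{B_{1/k}(x_0)\cap\pa M}/|B_{1/k}(x_0)\cap\pa M|_g-1/|\pa M|_g$, solve for $u_k$, and normalize to mean zero. The uniform $W^{1,1+\va_0}$ estimate from the previous step and local $H^1$ bounds away from $x_0$ (the right-hand sides $f_k$ are uniformly bounded on $\pa M\setminus B_{2/k}(x_0)$) produce a limit $u$ in $W^{1,1+\va_0}(\rho^{1-2\sigma},M)\cap H^1_{\mathrm{loc}}(\rho^{1-2\sigma},\overline M\setminus\{x_0\})$. For the pointwise bounds I work in Fermi coordinates at $x_0$, where the metric $g$ becomes close to the Euclidean metric on $\R^{n+1}_+$ and $\rho$ close to $x_{n+1}$, so the leading-order operator is $-\mathrm{div}(x_{n+1}^{1-2\sigma}\nabla\cdot)$ whose Neumann fundamental solution with singular data $\delta_0$ on $\pa\R^{n+1}_+$ is exactly $c_{n,\sigma}|x|^{2\sigma-n}$ (this is the conjugate formulation of the Riesz potential used in the Caffarelli--Silvestre extension \eqref{eq:poisson}). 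The upper bound then follows by a standard barrier argument: $A_2\,\mathrm{dist}_g(x,x_0)^{2\sigma-n}$ is a supersolution outside a small neighborhood of $x_0$ once $A_2$ is chosen large enough, and $u-A_2\,\mathrm{dist}_g(\cdot,x_0)^{2\sigma-n}$ has nonpositive boundary data and no interior maximum by the weighted weak maximum principle. For the lower bound one uses the Harnack inequality of Cabr\'e--Sire or Tan--Xiong to propagate the blow-up rate of $u_k$ at $x_0$ (obtained by testing with a cutoff of the Euclidean fundamental solution) down to the claimed $A_1\,\mathrm{dist}_g(x,x_0)^{2\sigma-n}-A_0$ bound.

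The main obstacle is the $W^{1,1+\va_0}$ regularity claim and its companion sharp lower bound on the Green's function. The former needs the weighted Meyers improvement in a form that is uniform in the approximating sequence and robust to the boundary Neumann condition, while the latter requires turning a Harnack inequality valid in the interior into a two-sided estimate all the way up to $\pa M$; I expect both to follow from the weighted machinery of \cite{CaS,TX} but require careful bookkeeping in Fermi coordinates.
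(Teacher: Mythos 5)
Your high-level plan (variational $L^2$ solution $\to$ $L^1$ approximation $\to$ concentrate to a Dirac mass) matches the paper's structure, but the two technically decisive steps are carried out by different means, and in both places your proposal leaves real gaps.

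For the uniform $W^{1,1+\va_0}(\rho^{1-2\sigma},M)$ bound, you propose to combine a Boccardo--Gallou\"et truncation with a weighted Meyers/Gehring reverse-H\"older estimate and then dualize. This is heavier machinery than what is actually used, and the duality step you sketch does not really close: the Boccardo--Gallou\"et bound controls $\int_M\rho^{1-2\sigma}|\nabla u|^2/(1+|u|)^\theta$, and the missing ingredient is control on the size of $u$ itself so that H\"older can convert this into $\int_M\rho^{1-2\sigma}|\nabla u|^\tau$ with $\tau>1$. The paper supplies exactly that via a weighted Poincar\'e--Sobolev inequality (Proposition \ref{poincare inequality} and Corollary \ref{poincare inequality2}, built on the Gurka--Opic embeddings in Proposition \ref{sharp sobolev embedding}), applied to the truncated function $\phi_{\theta/2}(u)$, yielding a self-improving inequality on $\|u\|_{L^1(\rho^{1-2\sigma},M)}$. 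No Meyers/Gehring theorem is needed, and in fact invoking it uniformly in the approximating sequence and across the conormal boundary would be genuinely delicate; the interpolation route sidesteps this entirely.

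For the two-sided pointwise bounds on the Neumann function, your barrier-plus-Harnack plan has an unaddressed difficulty: $\mathrm{dist}_g(x,x_0)^{2\sigma-n}$ is only an approximate solution of $\mathrm{div}_g(\rho^{1-2\sigma}\nabla_g\cdot)=0$; the perturbations from the metric and from $\rho\not\equiv x_{n+1}$ generate error terms of the same order as the leading one, and showing that a large multiple of $\mathrm{dist}_g(x,x_0)^{2\sigma-n}$ is a genuine supersolution up to $\pa M$ is not a routine matter. The paper avoids this by a cleaner device: after subtracting off the solution $v_k$ with the smooth part of the data, the remainder $w_k$ satisfies homogeneous conormal data away from $x_0$ and can be extended \emph{evenly} across $\pa M$ (using $g^{i,n+1}=0$ for $i<n+1$ in Fermi coordinates), so it becomes a weak solution in a full ball of a degenerate interior equation with a Dirac source at $x_0$. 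The two-sided Green's function estimates of Fabes--Jerison--Kenig \cite{FJK2} for $A_2$-weighted operators then give \eqref{nb4} directly, and the tangential/normal derivative bounds \eqref{nb4.1}--\eqref{nb4.2} follow from the Schauder estimates in the Appendix by rescaling. You would do well to incorporate this reflection trick; otherwise you need a quantitative up-to-boundary Harnack inequality plus a more careful supersolution construction, neither of which your sketch provides.
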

The proof of Theorem \ref{thm:neumann} follows from Lemma \ref{lem:nb3}, Theorem \ref{thm:nb2} and some approximation arguments. When $\sigma=1/2$, Theorem \ref{thm:neumann} follows directly from Brezis-Strauss \cite{BS} and Kenig-Pipher \cite{KP}.

\begin{notations*} 
We collect below a list of the main notations used throughout the paper.
\begin{itemize}
\item We always assume that $n\ge 2, \sigma\in (0,1)$, and $\rho$ is a smooth defining function as in Theorem \ref{thm: main thm A} without otherwise stated. Denote $q=\frac{2n}{n-2\sigma}$.

\item For a domain $D\subset \R^{n+1}$ with boundary $\pa D$, we denote $\pa' D$ as the interior of $\overline D\cap \pa \R^{n+1}_+$ in $\R^n=\partial\R^{n+1}_+$ and $\pa''D=\pa D \setminus \pa' D$.

\item For $\bar x\in \R^{n+1}$, $\B_{r}(\bar x):=\{x\in \R^{n+1}: |x-\bar x|=\sqrt{(x_1-\bar x_1)^2+\cdots+(x_{n+1}-\bar x_{n+1})^2}<r\}$, $\B^+_{r}(\bar x):=\B_{r}(\bar x)\cap \R^{n+1}_+$. If $\bar x\in \pa\R^{n+1}_+$,  $B_{r}(\bar x):=\{x=(x', 0): |x'-\bar x'|<r\}$. Hence $\pa' \B^+_{r}(\bar x)=B_{r}(\bar x)$ if $\bar x\in \pa\R^{n+1}_+$. We will not keep writing the center $\bar x$ if $\bar x=0$. 

\end{itemize}

\end{notations*}
\bigskip

\noindent\textbf{Acknowledgements:} Both authors thank Prof. Y.Y. Li for encouragements and useful discussions. Tianling Jin was partially supported by a University and Louis Bevier Dissertation Fellowship at Rutgers University and Rutgers University School of Art and Science Excellence Fellowship. Jingang Xiong was partially supported by CSC project for visiting Rutgers University and NSFC No. 11071020. He is very grateful to the Department of Mathematics at Rutgers University for the kind hospitality.

\section{Preliminaries}
\label{sec: preliminary}

\begin{prop}\label{prop: 1} For any $u\in C^\infty_c(\overline{\R}^{n+1}_+)$, we have
\[
 \left(\int_{\R^n}|u(x',0)|^q\,\ud x' \right)^\frac{2}{q}\leq S(n,\sigma) \int_{\R^{n+1}_+}x_{n+1}^{1-2\sigma}|\nabla u(x)|^2\,\ud x.
\]
Moreover, the above inequality fails if $S(n,\sigma)$ is replaced by any smaller constant.
\end{prop}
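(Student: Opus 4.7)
The plan is to reduce Proposition \ref{prop: 1} to the already-established inequality \eqref{sharp trace-1}, which is known for functions $F$ obtained as Poisson extensions of $\dot H^\sigma(\R^n)$. Given $u\in C_c^\infty(\overline{\R}^{n+1}_+)$, set $f:=u(\cdot,0)\in C_c^\infty(\R^n)$ and let $F$ denote its Poisson extension via the kernel $\mathcal{P}_\sigma$ defined in \eqref{eq:poisson}. Since $F(\cdot,0)=f=u(\cdot,0)$, the left-hand sides of \eqref{sharp trace-1} and the inequality in question coincide, and so it suffices to prove the energy-minimizing property
\[
\int_{\R^{n+1}_+}x_{n+1}^{1-2\sigma}|\nabla F|^2\,\ud x\leq \int_{\R^{n+1}_+}x_{n+1}^{1-2\sigma}|\nabla u|^2\,\ud x.
\]

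To establish this, I would use the extension PDE $\mathrm{div}(x_{n+1}^{1-2\sigma}\nabla F)=0$ in $\R^{n+1}_+$ together with the Caffarelli--Silvestre boundary identity $-N_\sigma^{-1}\lim_{x_{n+1}\to 0^+}x_{n+1}^{1-2\sigma}\partial_{n+1}F(x',x_{n+1})=(-\Delta)^\sigma f(x')$ and integrate by parts. Since $u$ has compact support, I can work on $\B_R^+\cap \{x_{n+1}>\va\}$ for $R$ large enough to contain $\operatorname{supp} u$, and then send $\va\to 0^+$. The bulk term vanishes by the PDE, the lateral spherical piece vanishes since $u\equiv 0$ there, and the degenerate boundary limit yields
\[
\int_{\R^{n+1}_+}x_{n+1}^{1-2\sigma}\nabla F\cdot\nabla u\,\ud x = N_\sigma\int_{\R^n}f\,(-\Delta)^\sigma f\,\ud x' = N_\sigma\|f\|_{\dot H^\sigma(\R^n)}^2 = \int_{\R^{n+1}_+}x_{n+1}^{1-2\sigma}|\nabla F|^2\,\ud x,
\]
the last equality being the one recorded in the introduction. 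By Cauchy--Schwarz this forces $\int x_{n+1}^{1-2\sigma}|\nabla F|^2\leq \int x_{n+1}^{1-2\sigma}|\nabla u|^2$, and combining with \eqref{sharp trace-1} applied to $F$ completes the proof of the inequality. The main technical point is the justification of the boundary limit at $x_{n+1}=0$; this is standard since $f\in C_c^\infty(\R^n)$ gives smooth decay of $F$ in the interior and the Caffarelli--Silvestre identity holds classically in this regime.

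For the sharpness statement, I would test against approximations of the Lieb extremizers. The functions $f_\lambda(x')=(\lambda/(1+\lambda^2|x'|^2))^{(n-2\sigma)/2}$ saturate Lieb's sharp Sobolev inequality, and their Poisson extensions $F_\lambda$ therefore achieve equality in \eqref{sharp trace-1}. Since $F_\lambda$ is not compactly supported, I would truncate by a cutoff $\eta_R(x)=\eta(x/R)$ equal to $1$ on $\B_R$ and supported in $\B_{2R}$, mollify if needed so as to lie in $C_c^\infty(\overline{\R}^{n+1}_+)$, and verify using the explicit polynomial decay of $F_\lambda$ and $\nabla F_\lambda$ that
\[
\frac{\|u_R(\cdot,0)\|_{L^q(\R^n)}^2}{\int_{\R^{n+1}_+}x_{n+1}^{1-2\sigma}|\nabla u_R|^2\,\ud x}\longrightarrow \frac{1}{S(n,\sigma)}\quad\text{as } R\to\infty,
\]
which rules out any smaller constant. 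The most delicate estimate here is controlling the cross term $\int x_{n+1}^{1-2\sigma}\nabla F_\lambda\cdot\nabla\eta_R\, F_\lambda$ on the annular region $R\leq |x|\leq 2R$, but this follows from the explicit rates $F_\lambda(x)=O(|x|^{-(n-2\sigma)})$ and $|\nabla F_\lambda(x)|=O(|x|^{-(n-2\sigma+1)})$ inherited from $\mathcal{P}_\sigma$.
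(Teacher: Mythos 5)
Your argument is essentially the same as the paper's, which simply delegates the key step to Lemma A.3 of \cite{JLX1}: that lemma records exactly the energy-minimizing property of the $\sigma$-harmonic (Poisson) extension that you derive via integration by parts and Cauchy--Schwarz, and combining it with \eqref{sharp trace-1} gives the inequality, while the sharpness follows as you say by testing truncated extensions of Lieb's extremizers. One small slip: since the paper's normalization is $N_\sigma\int x_{n+1}^{1-2\sigma}|\nabla F|^2=\|f\|^2_{\dot H^\sigma}$, your equality chain should read $\int x_{n+1}^{1-2\sigma}\nabla F\cdot\nabla u=N_\sigma^{-1}\|f\|^2_{\dot H^\sigma}=\int x_{n+1}^{1-2\sigma}|\nabla F|^2$ (with $N_\sigma^{-1}$, not $N_\sigma$); this does not affect the conclusion, since both sides reduce to the same boundary integral.
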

\begin{proof} It follows from \eqref{sharp trace} and Lemma A.3 of \cite{JLX1}.  
See also Corollary 5.3 of \cite{GQ}.
\end{proof}

\begin{prop} \label{prop: mini const}
Let $M$ be as in Theorem \ref{thm: main thm A}. Let $\sigma\in (0,1)$, and $\rho$ be a defining function of
$\pa M$ with $|\nabla_g \rho|=1$ on $\pa M$. Suppose there exist some positive constants $\tilde{S}$ and $\tilde{A}$
such that, for all $u\in H^1(\rho^{1-2\sigma}, M)$,
\[
\left(\int_{\pa M}|u|^q\,\ud s_g\right)^\frac{2}{q}\leq \tilde{S}\int_{M}\rho^{1-2\sigma}|\nabla_g u|^2\,\ud v_g+
\tilde{A} \int_{\pa M}|u|^2\,\ud s_g.
\]
Then $\tilde{S}\geq S(n,\sigma)$.
\end{prop}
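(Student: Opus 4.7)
The plan is a standard concentration-and-rescaling argument that localizes near a single boundary point and reduces matters to the sharp Euclidean trace inequality of Proposition \ref{prop: 1}.

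For any $\va>0$, I first pick $\phi \in C^\infty_c(\overline{\R}^{n+1}_+)$ with $\phi(\cdot,0) \not\equiv 0$ and
\[
\left(\int_{\R^n}|\phi(x',0)|^q\,\ud x'\right)^{2/q} \geq (S(n,\sigma) - \va) \int_{\R^{n+1}_+} x_{n+1}^{1-2\sigma}|\nabla \phi|^2\,\ud x,
\]
which exists by the sharpness assertion in Proposition \ref{prop: 1} (for instance by cutting off a Lieb-type extremal bubble lifted through the Poisson kernel $\mathcal P_\sigma$ in \eqref{eq:poisson}). Then I fix a point $x_0 \in \pa M$ and Fermi coordinates $\Psi:\B^+_{r_0}\cup B_{r_0}\to U\subset\overline M$ with $\Psi(0)=x_0$; since $|\nabla_g\rho|=1$ on $\pa M$, I may arrange $\rho\circ\Psi(x) = x_{n+1} + O(|x|^2)$ and $g_{ij}(x) = \delta_{ij} + O(|x|)$. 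For each large $\lda$, I substitute
\[
u_\lda(y) = \lda^{(n-2\sigma)/2}\phi\big(\lda\,\Psi^{-1}(y)\big),
\]
extended by zero off the coordinate patch, into the hypothesized inequality; once $\lda$ is large enough the support sits inside $U$ and $u_\lda \in H^1(\rho^{1-2\sigma}, M)$.

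The critical exponent $q = 2n/(n-2\sigma)$ is designed so that, under the blow-up change of variable $z = \lda x$, the two top-order quantities are scale-invariant on $\R^{n+1}_+$. Exploiting $g_{ij}(z/\lda)\to \delta_{ij}$ and $\lda\cdot\rho(\Psi(z/\lda))\to z_{n+1}$ uniformly on $\mathrm{supp}\,\phi$, one obtains
\[
\int_{\pa M}|u_\lda|^q\,\ud s_g \longrightarrow \int_{\R^n}|\phi(z',0)|^q\,\ud z', \qquad \int_M \rho^{1-2\sigma}|\nabla_g u_\lda|^2\,\ud v_g \longrightarrow \int_{\R^{n+1}_+}z_{n+1}^{1-2\sigma}|\nabla\phi|^2\,\ud z
\]
as $\lda \to \infty$. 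Meanwhile the subcritical boundary term is dissipated by the concentration: $\int_{\pa M} u_\lda^2\,\ud s_g = O(\lda^{-2\sigma})\to 0$. Passing $\lda\to\infty$ in the hypothesized inequality therefore kills the $\tilde A$ term and returns the Euclidean inequality evaluated at $\phi$ but with constant $\tilde S$; combined with the near-extremality of $\phi$, this forces $\tilde S \geq S(n,\sigma)-\va$, and letting $\va\to 0$ finishes.

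The one delicate point is justifying the convergence of $\int_M \rho^{1-2\sigma}|\nabla_g u_\lda|^2\,\ud v_g$ to its Euclidean model, since $\rho$ is only asymptotically $x_{n+1}$ and the weight becomes singular at $\pa M$ when $\sigma>1/2$. After the change of variables, the integrand is a product of smooth factors converging to $1$ (from $g$ and $\sqrt{\det g}$), a factor $\bigl(z_{n+1}+O(\lda^{-1}|z|^2)\bigr)^{1-2\sigma}$ from $\rho$, and a factor converging pointwise to $|\nabla\phi(z)|^2$; the whole product is uniformly dominated on $\mathrm{supp}\,\phi$ by a constant multiple of $z_{n+1}^{1-2\sigma}|\nabla\phi(z)|^2$, which is integrable on $\R^{n+1}_+$, so dominated convergence applies.
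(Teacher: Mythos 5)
Your argument is correct, and it is the dual of the paper's: the paper assumes $\tilde S < S(n,\sigma)$ and descends from the manifold inequality to a local Euclidean one (valid for \emph{all} test functions in a small half-ball) by localization, Hölder absorption of the $L^2$ term, and scaling, then contradicts Proposition~\ref{prop: 1}; you instead construct a family of concentrating test functions $u_\lambda$ from a near-extremizer $\phi$ of the Euclidean inequality, plug them into the hypothesized inequality, and pass $\lambda\to\infty$. Both proofs use the same two ingredients — the Fermi-coordinate expansion of $g$ and $\rho$ near a boundary point, and the scale-invariance of the $L^q$ trace and weighted Dirichlet energy together with the subcriticality of $\int_{\partial M}u^2$ — but they treat the $L^2$ term by different mechanisms: the paper absorbs it via Hölder into the $L^q$ term on a small ball, while you observe it scales like $\lambda^{-2\sigma}$ and vanishes directly. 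Your direct construction avoids the contradiction framing and is a little more self-contained, since it does not need to carry the $\tilde A$ constant through the intermediate steps.

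One small point of precision in your final paragraph: the bound $(z_{n+1}+O(\lambda^{-1}|z|^2))^{1-2\sigma}\le C\,z_{n+1}^{1-2\sigma}$ is not literally available from $\rho\circ\Psi(x)=x_{n+1}+O(|x|^2)$ alone, because the error term could in principle dominate $z_{n+1}$ when $z_{n+1}$ is small (and the expression could even fail to be positive). What saves the argument is that $\rho$ vanishes exactly on $\{x_{n+1}=0\}$, so $\rho\circ\Psi$ factors as $x_{n+1}\,a(x)$ with $a$ smooth and $a(0)=1$ (using $|\nabla_g\rho|=1$); hence $\lambda\,\rho\circ\Psi(z/\lambda)=z_{n+1}\bigl(1+O(|z|/\lambda)\bigr)$ is uniformly comparable to $z_{n+1}$ on $\mathrm{supp}\,\phi$ for large $\lambda$, which is exactly what your dominated convergence needs, including when $\sigma>\tfrac12$. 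You should state the factored form $\rho\circ\Psi(x)=x_{n+1}\bigl(1+O(|x|)\bigr)$ rather than just $x_{n+1}+O(|x|^2)$.
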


\begin{proof}
Given Proposition \ref{prop: 1}, the proof is standard (see, e.g.,  Proposition 4.2of \cite{H}). We include it here for completeness and to illustrate the role of $|\nabla\rho|=1$.
 We argue by contradiction. Suppose that there exists a Riemannian manifold $(M,g)$, a defining function $\rho$ of $\pa M$ with $|\nabla_g \rho|=1$ on $\pa M$, $\sigma \in (0,1)$, $\tilde S<S(n,\sigma)$ and $\tilde A>0$ such that for all $u\in H^1(\rho^{1-2\sigma}, M)$,
 \be\label{eq:smaller}
\left(\int_{\pa M}|u|^q\,\ud s_g\right)^\frac{2}{q}\leq \tilde{S}\int_{M}\rho^{1-2\sigma}|\nabla_g u|^2\,\ud v_g+
\tilde{A} \int_{\pa M}|u|^2\,\ud s_g.
\ee
Let $x\in \pa M$. For any $\va>0$, which will be chosen sufficiently small, there exists a chart $(\Omega, \varphi)$ of $M$ at $x$ and $\delta>0$ such that $\varphi(\Omega)=\mathcal{B}^+_{\delta}(0)$ the upper half Euclidean ball of center $0$ and radius $\delta$ in $\R^{n+1}_+$, and
\be\label{metric small}
(1-\va)\delta_{ij}\le g_{ij}\le (1+\va)\delta_{ij}.
\ee 
 By assumption, \eqref{eq:smaller} holds for any $u\in C_c^{\infty}(\Omega\cup(\pa\Omega\cap\pa M))$, i.e.,
 \[
 \begin{split}
 \left(\int_{B_{\delta}(0)}|u|^q \sqrt{\det(g_{ij})}\,\ud x'\right)^\frac{2}{q}&\leq \tilde{S}\int_{\B_{\delta}^+(0)}\rho^{1-2\sigma}g^{ij}u_i u_j \sqrt{\det(g_{ij})}\,\ud x\\
&\quad +\tilde{A} \int_{B_{\delta}(0)}|u|^2\sqrt{\det(g_{ij})}\,\ud x'.
\end{split}
 \]
It follows from \eqref{metric small}, $|\nabla_g \rho|=1$ and $\rho=0$ on $\pa M$ that there exists $\delta_0>0, \tilde S'<S(n,\sigma), \tilde A'>0$ such that for all $\delta\in (0,\delta_0)$ and $u\in C_c^{\infty}(\B_{\delta}(0)\cup B_{\delta}(0))$, i.e.,
 \[
 \left(\int_{B_{\delta}(0)}|u|^q \,\ud x'\right)^\frac{2}{q}\leq \tilde{S}'\int_{\B_{\delta}^+(0)}x_{n+1}^{1-2\sigma}|\nabla u|^2\,\ud x+\tilde{A}' \int_{B_{\delta}(0)}|u|^2\,\ud x'.
 \]
By H\"older's inequality, $\int_{B_{\delta}(x)}|u|^2\,\ud x'\leq |B_{\delta}(0)|^{\frac{q-2}{q}}\left(\int_{B_{\delta}(0)}|u|^q \,\ud x'\right)^\frac{2}{q}$. By choosing $\delta$ sufficiently small, we have that there exists $\tilde S''<S(n,\sigma)$ such that for all $u\in C_c^{\infty}(\B_{\delta}(0)\cup B_{\delta}(0))$
\[
 \left(\int_{B_{\delta}(0)}|u|^q \,\ud x'\right)^\frac{2}{q}\leq \tilde{S}''\int_{\B_{\delta}^+(0)}x_{n+1}^{1-2\sigma}|\nabla u|^2\,\ud x.
 \]
 Consequently, by a scaling argument, we have
 \[
 \left(\int_{\R^n}|u(x',0)|^q\,\ud x' \right)^\frac{2}{q}\leq \tilde{S}'' \int_{\R^{n+1}_+}x_{n+1}^{1-2\sigma}|\nabla u(x)|^2\,\ud x.
\]
 for any $u\in C^\infty_c(\overline{\R}^{n+1}_+)$, which contradicts Proposition \ref{prop: 1}.
\end{proof}

\begin{prop}\label{prop: weak ineq}  Assume the assumptions in  Proposition \ref{prop: mini const}. Then
for any $\va>0$ there exists a positive constant $B_\va$ such that
\[
\left(\int_{\pa M}|u|^q\,\ud s_g\right)^\frac{2}{q}\leq (S(n,\sigma)+\va)\int_{M}\rho^{1-2\sigma}|\nabla_g u|^2\,\ud v_g
+B_\va \int_{ M}\rho^{1-2\sigma}|u|^2\,\ud v_g.
\]
\end{prop}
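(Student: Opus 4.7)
The plan is to prove the inequality for smooth $u\in C^\infty(\overline M)$ by a standard partition of unity argument that transplants the sharp Euclidean inequality from Proposition \ref{prop: 1} onto each boundary chart, then extend by density. The hypothesis $|\nabla_g\rho|=1$ on $\pa M$ will be used to replace $\rho^{1-2\sigma}$ by $x_{n+1}^{1-2\sigma}$ in local Fermi coordinates up to a factor arbitrarily close to $1$; this is where the sharp constant $S(n,\sigma)$ survives on the principal term, with the slack being absorbed into the zero-order term weighted by $\rho^{1-2\sigma}$.

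\textbf{Step 1: localization.} Given $\va>0$, cover $\overline M$ by finitely many charts $\{\Omega_i\}_{i=0}^N$ in which the metric and the defining function are nearly Euclidean/flat: $\Omega_0$ is an open set with $\overline{\Omega_0}\subset M\setminus\pa M$, while for $i\ge 1$ each $\Omega_i$ meets $\pa M$ and carries Fermi coordinates $\varphi_i:\Omega_i\to \B_\delta^+(0)$ (with $x_{n+1}=d(\cdot,\pa M)$) such that $(1-\va)\delta_{jk}\le g_{jk}\le (1+\va)\delta_{jk}$ and $(1-\va)x_{n+1}\le \rho\le (1+\va)x_{n+1}$ throughout $\B_\delta^+(0)$. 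The latter is possible because both $\rho$ and $d$ are defining functions with unit gradient on $\pa M$, hence $\rho=d(1+O(d))$, so $\delta$ can be chosen small enough depending only on $\va$, $M$, $g$, and $\rho$. Pick a smooth partition of unity $\{\eta_i\}_{i=0}^N$ with $\sum_i\eta_i^2=1$ and $\mathrm{supp}\,\eta_i\subset\Omega_i$.

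\textbf{Step 2: Minkowski and Euclidean trace on each piece.} By Minkowski's inequality in $L^{q/2}(\pa M)$ applied to the decomposition $|u|^2=\sum_i \eta_i^2 |u|^2$,
\[
\Bigl(\int_{\pa M}|u|^q\,\ud s_g\Bigr)^{\!2/q}
=\Bigl\|\textstyle\sum_i(\eta_i u)^2\Bigr\|_{L^{q/2}(\pa M)}
\le \sum_{i=0}^N \Bigl(\int_{\pa M}|\eta_i u|^q\,\ud s_g\Bigr)^{\!2/q}.
\]
The $i=0$ term vanishes since $\eta_0 u\equiv 0$ on $\pa M$. For $i\ge 1$, transplant $v_i:=\eta_i u\circ\varphi_i^{-1}$, which extends to an element of $C_c^\infty(\overline{\R}^{n+1}_+)$, and apply Proposition \ref{prop: 1}. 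Using the metric and weight comparisons from Step 1 to convert Euclidean integrals back to their Riemannian counterparts yields, for some constant $C$ depending only on $n,\sigma$,
\[
\Bigl(\int_{\pa M}|\eta_i u|^q\,\ud s_g\Bigr)^{\!2/q}
\le (1+C\va)\,S(n,\sigma)\int_{M}\rho^{1-2\sigma}|\nabla_g(\eta_i u)|^2\,\ud v_g.
\]

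\textbf{Step 3: reassembly.} The key algebraic identity $\sum_i|\nabla_g(\eta_i u)|^2=|\nabla_g u|^2+K\,u^2$ with $K:=\sum_i|\nabla_g\eta_i|^2\in L^\infty(M)$ follows from $\nabla_g(\sum_i\eta_i^2)=0$. Summing the bounds from Step 2 and multiplying against the weight $\rho^{1-2\sigma}$ therefore gives
\[
\Bigl(\int_{\pa M}|u|^q\,\ud s_g\Bigr)^{\!2/q}
\le (1+C\va)S(n,\sigma)\int_M\rho^{1-2\sigma}|\nabla_g u|^2\,\ud v_g
+(1+C\va)S(n,\sigma)\|K\|_\infty\int_M\rho^{1-2\sigma}u^2\,\ud v_g.
\]
Choosing $\va$ small enough that $(1+C\va)S(n,\sigma)\le S(n,\sigma)+\va$ and setting $B_\va:=(1+C\va)S(n,\sigma)\|K\|_\infty$ yields the claim for $u\in C^\infty(\overline M)$. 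A density argument using the definition of $H^1(\rho^{1-2\sigma},M)$ as the completion of $C^\infty(\overline M)$, together with the continuity of the trace operator $T:H^1(\rho^{1-2\sigma},M)\to H^\sigma(\pa M)\hookrightarrow L^q(\pa M)$, extends the inequality to arbitrary $u\in H^1(\rho^{1-2\sigma},M)$.

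\textbf{Main obstacle.} The only non-routine point is ensuring that the sharp constant $S(n,\sigma)$ is preserved during the chart-by-chart comparison: one must verify that the comparison between $\rho^{1-2\sigma}\,\ud v_g$ and $x_{n+1}^{1-2\sigma}\,\ud x$, and between the Riemannian and Euclidean gradient squares, contributes only a $(1+O(\va))$ factor uniformly on $\B_\delta^+(0)$. This is precisely what the normalization $|\nabla_g\rho|=1$ on $\pa M$ and the use of Fermi coordinates provide, and is the same mechanism that appears in the proof of Proposition \ref{prop: mini const}.
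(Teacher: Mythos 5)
Your proof is correct and takes the same route as the paper: the paper simply says the result follows from Proposition \ref{prop: 1} and a standard partition-of-unity argument, citing Theorem 4.5 of Hebey \cite{H}, and your write-up is a complete execution of precisely that argument (including the usual $\sum_i\eta_i^2=1$ normalization so that the cross term $2u\langle\nabla_g u,\sum_i\eta_i\nabla_g\eta_i\rangle$ vanishes). The only cosmetic point is that you use the same letter $\va$ for the target slack in the statement and for the chart-comparison parameter in Step 1, which should be separated, but this is a bookkeeping matter and does not affect correctness.
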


\begin{proof}
 It also follows from Proposition \ref{prop: 1}  and a standard partition of unity argument, see, e.g., Theorem 4.5 of \cite{H} on page 95.
\end{proof}

For every $\al >0$, consider the functional
\[
 I_\al[u]=\frac{\int_{M}\rho^{1-2\sigma}|\nabla_g u|^2\,\ud v_g+\al\int_{\pa M}|u|^2\,\ud s_g}{\left(\int_{\pa M}|u|^q\,\ud s_g\right)^{2/q}},
\quad u\in H^1(\rho^{1-2\sigma}, M),\ \ u\not\equiv 0\ \mbox{on}\ \pa M.
\]
\begin{prop}\label{prop: existence of mini}
Suppose that for some $\al >0$,
\be\label{contradict assup}
\xi_\al:=\inf_{u\in H^1(\rho^{1-2\sigma}, M), \ u|_{\pa M}\not\equiv 0}I_\al[u]<\frac{1}{S(n,\sigma)},
\ee then $\xi_\al$ is achieved by a nonnegative function $u_\al \in H^1(\rho^{1-2\sigma}, M)$ with
\be\label{unit integral}
\int_{\pa M}u_\al^q\,\ud s_g=1.
\ee
\end{prop}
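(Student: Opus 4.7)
The plan is to use the direct method of the calculus of variations combined with a concentration-compactness analysis that exploits the strict gap $\xi_\al<1/S(n,\sigma)$.

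First, I would pick a minimizing sequence $\{u_j\}\subset H^1(\rho^{1-2\sigma},M)$ with the normalization $\int_{\pa M}|u_j|^q\,\ud s_g=1$ and $I_\al[u_j]\to \xi_\al$. Replacing $u_j$ by $|u_j|$ preserves $I_\al[u_j]$, so I may assume $u_j\ge 0$. From the minimizing property, $\int_M \rho^{1-2\sigma}|\nabla u_j|^2\,\ud v_g$ and $\int_{\pa M} u_j^2\,\ud s_g$ are bounded; coupling these with a weighted Poincar\'e-type bound of the form $\int_M \rho^{1-2\sigma}u^2\,\ud v_g\le C\bigl(\int_M \rho^{1-2\sigma}|\nabla u|^2\,\ud v_g+\int_{\pa M}u^2\,\ud s_g\bigr)$, valid on compact manifolds for the $A_2$ weight $\rho^{1-2\sigma}$, yields boundedness of $\{u_j\}$ in $H^1(\rho^{1-2\sigma},M)$.

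After passing to a subsequence, $u_j\rightharpoonup u_\al\ge 0$ weakly in $H^1(\rho^{1-2\sigma},M)$, $u_j\to u_\al$ strongly in $L^2(\rho^{1-2\sigma},M)$ by the Rellich-type compactness available for $A_2$ weights, and $u_j|_{\pa M}\to u_\al|_{\pa M}$ strongly in $L^p(\pa M)$ for every $p<q$ via the continuous trace $H^1(\rho^{1-2\sigma},M)\to H^\sigma(\pa M)$ and the compact embedding $H^\sigma(\pa M)\hookrightarrow L^p(\pa M)$. Setting $v_j:=u_j-u_\al$, the Brezis--Lieb lemma gives
\[
\int_{\pa M}u_j^q\,\ud s_g=\int_{\pa M}u_\al^q\,\ud s_g+\int_{\pa M}v_j^q\,\ud s_g+o(1),
\]
\[
\int_M\rho^{1-2\sigma}|\nabla u_j|^2\,\ud v_g=\int_M\rho^{1-2\sigma}|\nabla u_\al|^2\,\ud v_g+\int_M\rho^{1-2\sigma}|\nabla v_j|^2\,\ud v_g+o(1),
\]
while $\int_{\pa M}u_j^2\to \int_{\pa M}u_\al^2$. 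Writing $\lambda:=\int_{\pa M}u_\al^q\,\ud s_g$ and (along a further subsequence) $\mu:=\lim_{j\to\infty}\int_{\pa M}v_j^q\,\ud s_g$, one has $\lambda+\mu=1$, and the minimizing identity becomes
\[
\xi_\al=\Bigl(\int_M\rho^{1-2\sigma}|\nabla u_\al|^2\,\ud v_g+\al\int_{\pa M}u_\al^2\,\ud s_g\Bigr)+\lim_{j\to\infty}\int_M\rho^{1-2\sigma}|\nabla v_j|^2\,\ud v_g.
\]

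The first bracket is at least $\xi_\al\lambda^{2/q}$ when $\lambda>0$ by the definition of $\xi_\al$, and is nonnegative when $\lambda=0$. To bound the second term from below, I would apply Proposition \ref{prop: weak ineq} to $v_j$; the term $B_\va\int_M\rho^{1-2\sigma}v_j^2$ vanishes by the strong convergence $v_j\to 0$ in $L^2(\rho^{1-2\sigma},M)$, and letting $\va\downarrow 0$ yields $\liminf_{j\to\infty}\int_M\rho^{1-2\sigma}|\nabla v_j|^2\,\ud v_g\ge \mu^{2/q}/S(n,\sigma)$. Combining gives $\xi_\al\ge \xi_\al\lambda^{2/q}+\mu^{2/q}/S(n,\sigma)$. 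Since $2/q<1$ and $\lambda+\mu=1$, subadditivity gives $\lambda^{2/q}+\mu^{2/q}\ge 1$; if $\mu>0$, this forces $\xi_\al S(n,\sigma)\ge \mu^{2/q}/(1-\lambda^{2/q})\ge 1$, contradicting the hypothesis \eqref{contradict assup}. Therefore $\mu=0$, so $\lambda=1$ and \eqref{unit integral} holds for $u_\al$; weak lower semicontinuity of the numerator of $I_\al$ then gives $I_\al[u_\al]\le \xi_\al$, and the reverse inequality is automatic, so $u_\al$ is the required nonnegative minimizer. The main obstacle is precisely this concentration step: ensuring that the boundary $L^q$-mass cannot escape into a bubble on $\pa M$, for which the interplay between the sharp constant $S(n,\sigma)$ delivered by Proposition \ref{prop: weak ineq} and the strict gap $\xi_\al<1/S(n,\sigma)$ is essential.
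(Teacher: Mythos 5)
Your proof is correct and takes essentially the same route the paper intends: the paper simply cites the standard calculus-of-variations argument from Li--Zhu (p.~452), which is precisely the Brezis--Lieb decomposition together with the subadditivity argument using the strict gap $\xi_\al<1/S(n,\sigma)$ and Proposition~\ref{prop: weak ineq}; your write-up fills in those standard details faithfully.
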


\begin{proof}
Given Proposition \ref{prop: weak ineq}, the Proposition follows from standard calculus of variations, see page 452 of \cite{LZ2}.
\end{proof}

\begin{prop}\label{prop: weak version ineq}  Assume the assumptions in  Proposition \ref{prop: mini const}.
For any $\va>0$, there exists a positive constant $A_\va$ such that
\[
 \left(\int_{\pa M}|u|^q\,\ud s_g\right)^\frac{2}{q}\leq (S(n,\sigma)+\va)\int_{M}\rho^{1-2\sigma}|\nabla_g u|^2\,\ud v_g+A_\va \int_{\pa M}|u|^2\,\ud s_g.
\]
\end{prop}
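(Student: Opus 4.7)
The plan is to bootstrap from Proposition \ref{prop: weak ineq} via a concentration-compactness argument; a direct substitution using a weighted Poincar\'e--trace inequality of the form $\int_M \rho^{1-2\sigma} u^2 \leq C_0 (\int_M \rho^{1-2\sigma}|\nabla u|^2 + \int_{\pa M} u^2)$ would add $B_\eta C_0$ to the coefficient of the gradient term, and since $B_\eta\to\infty$ as $\eta\to 0$ this would destroy the sharp leading constant $S(n,\sigma)$. A Brezis--Lieb style splitting that isolates the weak limit $u$ of an extremising sequence is the mechanism that avoids this degradation, and putting it to work in the weighted setting is the main obstacle.

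First I would establish an auxiliary weighted Poincar\'e--trace inequality: there exists $C_0=C_0(M,g,n,\sigma,\rho)>0$ with
\[
\int_M \rho^{1-2\sigma} u^2\,\ud v_g \leq C_0 \int_M \rho^{1-2\sigma}|\nabla_g u|^2\,\ud v_g + C_0 \int_{\pa M} u^2\,\ud s_g
\]
for all $u\in H^1(\rho^{1-2\sigma}, M)$. Argue by contradiction: a violating sequence $u_k$ normalized by $\int_M \rho^{1-2\sigma}u_k^2=1$ is uniformly bounded in $H^1(\rho^{1-2\sigma}, M)$; since $\rho^{1-2\sigma}$ is an $A_2$ weight, the Rellich embedding $H^1(\rho^{1-2\sigma}, M)\hookrightarrow L^2(\rho^{1-2\sigma}, M)$ is compact, and together with continuity of the trace $H^1(\rho^{1-2\sigma}, M)\to L^2(\pa M)$ it produces a weak limit $u$ which is a nonzero constant (by $\nabla u\equiv 0$ and connectedness of $M$) vanishing on $\pa M$, a contradiction.

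Now suppose Proposition \ref{prop: weak version ineq} fails. Then for some $\varepsilon>0$ there is a sequence $u_k\in H^1(\rho^{1-2\sigma}, M)$ normalized by $\int_{\pa M}|u_k|^q\,\ud s_g=1$ with $\int_M\rho^{1-2\sigma}|\nabla_g u_k|^2\,\ud v_g\le \frac{1}{S(n,\sigma)+\varepsilon}$ and $\int_{\pa M}u_k^2\,\ud s_g\to 0$. The Poincar\'e--trace inequality provides a uniform bound of $\{u_k\}$ in $H^1(\rho^{1-2\sigma}, M)$. Passing to a subsequence, $u_k\rightharpoonup u$ weakly in $H^1(\rho^{1-2\sigma}, M)$ and strongly in $L^2(\rho^{1-2\sigma}, M)$; trace continuity forces $u|_{\pa M}=0$. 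Set $v_k:=u_k-u$; because $u|_{\pa M}=0$, $v_k|_{\pa M}=u_k|_{\pa M}$, so $\int_{\pa M}|v_k|^q\,\ud s_g=1$, while $v_k\to 0$ strongly in $L^2(\rho^{1-2\sigma}, M)$.

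Applying Proposition \ref{prop: weak ineq} to $v_k$ with parameter $\varepsilon/2$ gives
\[
1 \leq \bigl(S(n,\sigma)+\tfrac{\varepsilon}{2}\bigr)\int_M \rho^{1-2\sigma}|\nabla v_k|^2\,\ud v_g + B_{\varepsilon/2}\int_M \rho^{1-2\sigma}v_k^2\,\ud v_g;
\]
letting $k\to\infty$ (the last term vanishes) yields $\liminf_k \int_M \rho^{1-2\sigma}|\nabla v_k|^2\,\ud v_g \geq 1/(S(n,\sigma)+\varepsilon/2)$. On the other hand, weak convergence $u_k\rightharpoonup u$ in the weighted Hilbert space produces the Brezis--Lieb identity for the gradient norm,
\[
\int_M \rho^{1-2\sigma}|\nabla u_k|^2\,\ud v_g = \int_M \rho^{1-2\sigma}|\nabla v_k|^2\,\ud v_g + \int_M \rho^{1-2\sigma}|\nabla u|^2\,\ud v_g + o(1).
\]
Passing to the limit and rearranging,
\[
\int_M \rho^{1-2\sigma}|\nabla_g u|^2\,\ud v_g \leq \frac{1}{S(n,\sigma)+\varepsilon} - \frac{1}{S(n,\sigma)+\varepsilon/2} < 0,
\]
which is the desired contradiction.
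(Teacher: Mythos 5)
Your proof is correct, and it follows essentially the same concentration--compactness route that the paper (following Li--Zhu's Proposition 1.2) uses: extract a weak limit from a contradicting sequence, use the compact embedding of Corollary \ref{SSE on manifold} to kill the lower-order term, apply Proposition \ref{prop: weak ineq} to the difference, and conclude by Hilbert-space orthogonality of the gradient norm. The one small structural difference is that you bypass Proposition \ref{prop: existence of mini} — you work directly with an arbitrary contradicting sequence rather than routing through minimizers of $I_\alpha$ — and you supply the needed uniform $H^1(\rho^{1-2\sigma},M)$ bound via a self-contained weighted Poincar\'e--trace lemma; this is a clean, slightly more elementary packaging of the same argument.
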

\begin{proof}
Given Propositions \ref{prop: weak ineq} and \ref{prop: existence of mini}, and Corollary \ref{SSE on manifold}, the proof of Proposition \ref{prop: weak version ineq} is similar to Proposition 1.2 of \cite{LZ2} and we omit it here.
\end{proof}

\section{Asymptotic analysis}
\label{sec: asymp anal}

For brevity, from now on we write $S$ instead of $S(n,\sigma)$. We prove Theorem \ref{thm: main thm A} by contradiction.
 Namely, assume that for any $\al \geq 1$,
\be\label{3-1}
 \xi_\al<\frac{1}{S},
\ee
where $\xi_\al$ is defined as in Proposition \ref{prop: existence of mini}.
Let $u_\al$ be some nonnegative minimizer of $I_\al$ obtained in
Proposition \ref{prop: existence of mini} which satisfies
\be\label{eq: minimizer integral norm}
\xi_{\al}=\int_{M}\rho^{1-2\sigma}|\nabla_g u_{\al}|^2\,\ud v_g+\al\int_{\pa M} u_{\al}^2\,\ud s_g,\quad \int_{\pa M} u_{\al}^q\,\ud s_g=1,
\ee
and for any $\varphi\in H^1(\rho^{1-2\sigma}, M)$,
\be\label{eq:weak solution}
\int_{M}\rho^{1-2\sigma}\langle\nabla_g u_{\al}, \nabla_g\varphi\rangle_g\,\ud v_g+\al\int_{\pa M} u_{\al}\varphi\,\ud s_g=\xi_\al\int_{\pa M} u_{\al}^{q-1}\varphi\,\ud s_g.
\ee

The geodesic distance function $d(x):=\mathrm{dist}(x,\pa M)$ determines for some $\va_0>0$ an identification of $\pa M\times [0,\va_0)$ with a neighborhood of $\pa M$ in $M$: $(x', d)\in \pa M\times [0,\va_0)$ corresponds to the point obtained by following the integral curve of $\nabla _g d$ emanating from $x'$ for $d$ units of time. 
Furthermore, $\nabla_g d$ is orthogonal to the slices $\pa M\times \{d\}$. Define $\nu:= -\nabla _g d$ for $d<\va_0$. It follows from Theorem \ref{thm: L-infty}, Theorem \ref{thm: schauder} and Proposition \ref{thm: schauder-1} that $u_\al \in C^\gamma(\overline{M})\cap C^{\infty}(M)\cap C^\infty(\pa M)$  for some $\gamma\in (0,1)$ and
$\rho^{1-2\sigma}\frac{\pa_g u_\al}{\pa \nu}\in C(\pa M\times[0, \va_0/2])$. Hence, $u_\al$ satisfies the Euler-Lagrange equation 
\be\label{E-L eqn}
\begin{cases}
 \operatorname{div}_{g}\Big(\rho^{1-2\sigma}\nabla_g u_\al\Big)=0,&\quad \mbox{in }M,\\
\dlim_{d\to 0}\rho^{1-2\sigma}(x', d)\frac{\pa_g u_{\al}}{\pa \nu}(x', \rho)=\xi_\al u_\al^{q-1}(x')-\al u_\al(x'),& \quad \mbox{on }\pa M.
\end{cases}
\ee
in the pointwise sense. 

 It follows from the maximum  principle that
$\max_{\overline M}u_\al=\max_{\pa M} u_\al$. Let
$u_\al(x_\al)=\max_{\overline M}u_\al$, where $x_\al\in \pa M$, and $\mu_\al=u_\al(x_\al)^{-\frac{2}{n-2\sigma}}$.
By a Hopf Lemma (see, e.g., Proposition 4.11 in \cite{CaS}),
we have $\xi_\al u_\al(x_\al)^{q-1}-\al u_\al(x_\al)>0$, that is
\be\label{almu}
\al \mu_\al^{2\sigma} < \xi_\al.
\ee
Hence, $\lim_{\al \to \infty}\mu_\al^{2\sigma} = 0$.

\begin{lem}
As $\al \to \infty$, we have
\begin{subequations}
\begin{align}
 \xi_\al &\to \frac{1}{S}, \label{sub a}\\
\al \|u_\al\|^2_{L^2(\pa M)}&\to 0.\label{sub b}
\end{align}
\end{subequations}
\end{lem}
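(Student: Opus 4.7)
My plan is to extract both statements as consequences of Proposition \ref{prop: weak version ineq} combined with the normalization \eqref{unit integral} and the variational identity \eqref{eq: minimizer integral norm}.

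First I would observe that for each fixed $u$ with $u|_{\partial M}\not\equiv 0$ the map $\alpha\mapsto I_\alpha[u]$ is nondecreasing, hence $\xi_\alpha$ is nondecreasing in $\alpha$. Together with the standing assumption \eqref{3-1}, this shows that $\xi_\infty:=\lim_{\alpha\to\infty}\xi_\alpha$ exists and satisfies $\xi_\infty\le 1/S$.

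To prove \eqref{sub a}, fix $\varepsilon>0$ and apply Proposition \ref{prop: weak version ineq} to the minimizer $u_\alpha$. From \eqref{eq: minimizer integral norm} we have the elementary bounds
\[
\int_M\rho^{1-2\sigma}|\nabla_g u_\alpha|^2\,\ud v_g\le \xi_\alpha,\qquad \int_{\partial M}u_\alpha^2\,\ud s_g\le \frac{\xi_\alpha}{\alpha}.
\]
Substituting these into Proposition \ref{prop: weak version ineq} (applied to $u_\alpha$ with $\|u_\alpha\|_{L^q(\partial M)}=1$) gives
\[
1\le (S+\varepsilon)\xi_\alpha+A_\varepsilon\cdot\frac{\xi_\alpha}{\alpha}=\xi_\alpha\Bigl(S+\varepsilon+\frac{A_\varepsilon}{\alpha}\Bigr).
\]
Letting $\alpha\to\infty$ yields $\xi_\infty\ge 1/(S+\varepsilon)$, and then $\varepsilon\to 0$ gives $\xi_\infty\ge 1/S$, which combined with the reverse inequality proves \eqref{sub a}.

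For \eqref{sub b} I would exploit that the a priori bound $\int_{\partial M}u_\alpha^2\le \xi_\alpha/\alpha\le 1/(S\alpha)$ already forces $\int_{\partial M}u_\alpha^2\to 0$, so the term $A_\varepsilon\int_{\partial M}u_\alpha^2$ in Proposition \ref{prop: weak version ineq} is $o(1)$ as $\alpha\to\infty$. Rearranging Proposition \ref{prop: weak version ineq} therefore gives
\[
\int_M\rho^{1-2\sigma}|\nabla_g u_\alpha|^2\,\ud v_g\ge \frac{1-A_\varepsilon\int_{\partial M}u_\alpha^2\,\ud s_g}{S+\varepsilon}\ge \frac{1}{S+\varepsilon}-o(1).
\]
Combining this with the identity $\alpha\int_{\partial M}u_\alpha^2=\xi_\alpha-\int_M\rho^{1-2\sigma}|\nabla_g u_\alpha|^2$ and sending $\alpha\to\infty$ using \eqref{sub a},
\[
\limsup_{\alpha\to\infty}\alpha\int_{\partial M}u_\alpha^2\,\ud s_g\le \frac{1}{S}-\frac{1}{S+\varepsilon};
\]
letting $\varepsilon\to 0$ and noting that the quantity is nonnegative yields \eqref{sub b}. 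There is no substantive obstacle in this argument beyond bookkeeping; the only delicate point is ensuring the $\varepsilon$-perturbation in Proposition \ref{prop: weak version ineq} is decoupled from $\alpha$ before passing to the limit.
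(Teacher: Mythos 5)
Your proof is correct and takes essentially the same route as the paper's: both rest on feeding the variational identity \eqref{eq: minimizer integral norm} into Proposition \ref{prop: weak version ineq} and exploiting the large-$\alpha$ regime. The paper carries out the algebra in a single step, substituting $\int_M\rho^{1-2\sigma}|\nabla_g u_\alpha|^2=\xi_\alpha-\alpha\int_{\partial M}u_\alpha^2$ to obtain $1\le(S+\varepsilon)\xi_\alpha+(A_\varepsilon-(S+\varepsilon)\alpha)\int_{\partial M}u_\alpha^2$ and reading off both conclusions at once for $\alpha\ge 2A_\varepsilon/(S+\varepsilon)$; you instead prove \eqref{sub a} first from the cruder bounds and then feed it back to deduce \eqref{sub b}, which is slightly longer but equivalent. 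The opening monotonicity remark about $\alpha\mapsto\xi_\alpha$ is harmless but not needed, since the lower bound you derive already gives the limit directly.
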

\begin{proof}
For all small $\va>0$, it follows from Proposition \ref{prop: weak version ineq} that
\[
\begin{split}
1&\leq (S+\va)\int_{M}\rho^{1-2\sigma}|\nabla_g u_\al|^2\,\ud v_g+A_\va \int_{\pa M} u_\al^2\,\ud s_g\\&
=(S+\va)\xi_\al +(A_\va-(S+\va)\al)\int_{\pa M} u_\al^2\,\ud s_g.
\end{split}
\]
Hence, for every $\al\geq \frac{2A_\va}{S+\va}$ we have
\[
\frac{1}{S+\va}\leq \xi_\al<\frac{1}{S},\quad \frac{S }{2}\al\int_{\pa M} u_\al^2\,\ud s_g< \frac{\va}{S}.
\]
\eqref{sub a} and \eqref{sub b} follow immediately.
\end{proof}

Let $x=(x_1,\cdots, x_n, x_{n+1})=(x',x_{n+1})$ be \emph{Fermi coordinates} (see, e.g., \cite{E2}) at $x_\al$, where $(x_1, \cdots, x_n)$ are normal coordinates on $\pa M$ at $x_\al$ and $\gamma(x_{n+1})$ is the geodesic leaving from $(x_1,\cdot, x_n)$ in the orthogonal direction to $\pa M$ and parametrized by arc length.
In this coordinate system, 
\[
\sum_{1\leq i,j\leq n+1}g_{ij}(x)\ud x_i\ud x_j =\ud x_{n+1}^2+\sum_{1\leq i,j\leq n}g_{ij}(x)\ud x_i\ud x_j.
\]
Moreover, $g^{ij}$ has the following Taylor expansion near $\pa M$:
\begin{lem}[Lemma 3.2 in \cite{E2}]\label{taylor expansion of fermi}
For $\{x_k\}_{k=1,\cdots, n+1}$ are small,
\be\label{eq:taylor expansion of fermi}
g^{ij}(x)=\delta^{ij}+2h^{ij}(x',0)x_{n+1}+O(|x|^2),
\ee
where $i,j=1,\cdots,n$ and $h_{ij}$ is the second fundamental form of $\pa M$. 
\end{lem}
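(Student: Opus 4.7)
The statement is a standard computation in Riemannian geometry, cited here from Escobar \cite{E2}. My plan would be a direct Taylor expansion of $g_{ij}$ in Fermi coordinates followed by matrix inversion.

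I would first record the structural form of $g$ displayed just before the lemma, namely $g_{n+1,n+1}\equiv 1$ and $g_{i,n+1}\equiv 0$ for $i\le n$. Since $\gamma(x_{n+1})$ is a unit-speed geodesic, $\pa_{n+1}$ has constant unit norm, so $g_{n+1,n+1}\equiv 1$. The orthogonality $g_{i,n+1}(x',0)=0$ on $\pa M$ then propagates along the normal geodesic flow: $\pa_{n+1}g_{i,n+1}=\langle\nabla_{\pa_{n+1}}\pa_i,\pa_{n+1}\rangle+\langle\pa_i,\nabla_{\pa_{n+1}}\pa_{n+1}\rangle$, whose second term vanishes by the geodesic equation and whose first equals $\tfrac12\pa_i g_{n+1,n+1}=0$ by torsion-freeness.

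Next, for $i,j\le n$, I would Taylor expand $g_{ij}(x',x_{n+1})=g_{ij}(x',0)+x_{n+1}\,\pa_{n+1}g_{ij}(x',0)+O(x_{n+1}^2)$. Since $(x_1,\ldots,x_n)$ are Riemannian normal coordinates on $\pa M$ at $x_\al$, the zeroth-order term is $g_{ij}(x',0)=\delta_{ij}+O(|x'|^2)$. For the first-order coefficient, torsion-freeness together with $g_{i,n+1}\equiv 0$ gives $\pa_{n+1}g_{ij}=\langle\nabla_{\pa_i}\pa_{n+1},\pa_j\rangle+\langle\pa_i,\nabla_{\pa_j}\pa_{n+1}\rangle$; since $\pa_{n+1}$ is the inward unit normal to $\pa M$ on $\{x_{n+1}=0\}$, the Weingarten identity gives $\langle\nabla_{\pa_i}\pa_{n+1},\pa_j\rangle|_{x_{n+1}=0}=-h_{ij}(x',0)$, so by symmetry of $h$ one has $\pa_{n+1}g_{ij}|_{x_{n+1}=0}=-2h_{ij}(x',0)$. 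Hence $g_{ij}(x)=\delta_{ij}-2h_{ij}(x',0)x_{n+1}+O(|x|^2)$, and inverting via a Neumann series produces $g^{ij}(x)=\delta^{ij}+2h^{ij}(x',0)x_{n+1}+O(|x|^2)$ with $h^{ij}=\delta^{ia}\delta^{jb}h_{ab}$ to leading order, which is the claim.

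There is no substantial obstacle in the argument; the only delicate point is the sign convention for the second fundamental form, which is fixed here by taking $\pa_{n+1}$ to be the inward unit normal, consistent with $x_{n+1}>0$ in the interior of $M$ in these Fermi coordinates.
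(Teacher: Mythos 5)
Your derivation is correct: the structural observations ($g_{n+1,n+1}\equiv 1$, $g_{i,n+1}\equiv 0$ from unit-speed geodesics and the Gauss lemma), the Taylor expansion in $x_{n+1}$ combined with normal coordinates on $\pa M$, the identity $\pa_{n+1}g_{ij}|_{x_{n+1}=0}=-2h_{ij}$ via Weingarten with the inward normal, and the Neumann-series inversion are exactly the standard Fermi-coordinate computation. The paper itself supplies no proof here — it cites Escobar's Lemma 3.2 — and your argument is the usual way that lemma is established, with the sign convention correctly pinned down by the choice of inward normal.
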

For suitably small $\delta_0>0$ (independent of $\al$), we define $v_\al$ in a neighborhood of
$x_\al=0$ by
\[
 v_\al(x)=\mu_\al^{(n-2\sigma)/2}u_\al(\mu_\al x),\quad x\in \mathcal{B}_{\delta_0/\mu_\al}^+.
\]
It follows that
\be\label{eq 0}
\begin{cases}
 \operatorname{div}_{g_\al}\Big(\rho_\al^{1-2\sigma}\nabla_{g_\al} v_\al\Big)=0,&\quad \mbox{in }\mathcal{B}_{\delta_0/\mu_\al}^+\\
\lim_{x_{n+1}\to 0^+}\rho_\al^{1-2\sigma}\frac{\pa_{g_\al} v_{\al}}{\pa \nu}=\xi_\al v_\al^{q-1}-\al \mu_\al^{2\sigma} v_\al,& \quad \mbox{on }\pa' \mathcal{B}_{\delta_0/\mu_\al}^+=B_{\delta_0/\mu_\al}\\
v_\al(0)=1,\quad  0\leq v_\al \leq 1,
\end{cases}
\ee
where $g_\al(x)=g_{ij}(\mu_\al x)\ud x_i\ud x_j$, $\rho_\al(x)=\rho(\mu_\al x)/\mu_\al$.
It follows from \eqref{almu} and Theorem \ref{thm: L-infty} in the Appendix that
for all $R>1$,
\be\label{holder estimate 1}
\|v_\al\|_{C^\gamma(\mathcal{B}_R^+)}+\|v_\al\|_{H^1(\rho^{1-2\sigma}_\al,\mathcal{B}_R^+)}\leq C(R),\quad \mbox{for all sufficiently large }\al,
\ee
where $\gamma\in (0,1)$ is independent of $R$ and $\al$. It follows that there exists $v\in C_{loc}^\gamma(\overline{\mathbb{R}}^{n+1}_+)
\cap H_{loc}^1(x_{n+1}^{1-2\sigma},\overline{\mathbb{R}}^{n+1}_+)$ such that along some subsequence,
\be\label{holder estimate 3}
\begin{cases}
v_\al&\to v \mbox{ in } C^{\gamma/2}(\mathcal{B}_R^+),\\
v_\al&\rightharpoonup v \mbox{ weakly in } H^1(x_{n+1}^{1-2\sigma},\mathcal{B}_R^+)
\end{cases}
\ee
for any $R>0$ as $\al\to\infty$. Since $v_\al(0)=1$, we have
\be\label{holder estimate 2}
\begin{split}
 &\int_{B_1}v_\al^q\,\ud s_{g_\al}\geq 1/C>0,\\&
\int_{B_1}v_\al^2\,\ud s_{g_\al}\geq 1/C>0.
\end{split}
\ee
On the other hand,
\[
 \al \|u_\al\|^2_{L^2(\pa M)}\geq \al\int_{B_{\mu_\al}(x_\al)}u_\al^2= \al \mu_\al^{2\sigma}\int_{B_1}v_\al^2,
\]
where we abused notation by denoting $B_{r}(x_\al)$ as the geodesic ball on $\pa M$ centered at $x_\al$ with radius $r$.
It follows from \eqref{sub b} and \eqref{holder estimate 2} that
\be\label{almu1}
\lim_{\al \to \infty}\al \mu_\al^{2\sigma}=0.
\ee
From \eqref{eq 0}, \eqref{almu1} and \eqref{sub a}, we conclude that $v$ is a weak solution (see Section \ref{sec of weak solution} for the definition of weak solutions) of
\be\label{limt equ}
\begin{cases}
\operatorname{div}(x_{n+1}^{1-2\sigma}\nabla v)=0,\quad &\mbox{in }\mathbb{R}^{n+1}_+,\\
-\dlim_{x_{n+1}\to 0^+}x_{n+1}^{1-2\sigma}\pa_{x_{n+1}}v=\frac{1}{S}v^{q-1},\quad & \mbox{on }\pa\mathbb{R}^{n+1}_+,\\
v(0)=1,\quad 0\leq v\leq 1.
\end{cases}
\ee
By a Liouville type theorem, Theorem 1.5 in \cite{JLX1},
\[
v(x',0)=\left(\frac{1}{1+\tilde c(n,\sigma)|x'|^2}\right)^{\frac{n-2\sigma}{2}}, v(x', x_{n+1})=\int_{\R^n}\mathcal P_{\sigma}(x'-y', x_{n+1})v(y',0)\ud y',
\]
where $\tilde c(n,\sigma)$ is a positive constant such that $\int_{\R^n}v^q(z)\,\ud z=1$, and $\mathcal P_{\sigma}(x)$ is given in \eqref{eq:poisson}. Due to the uniqueness of the limit function $v$, we know that \eqref{holder estimate 3} holds for all $\al\to\infty$.

\begin{prop}
 \label{prop: integral converge}
For $\delta_0=\delta_0(M,g)>0$ small enough,
\[
 \lim_{\al\to \infty}\int_{B_{\delta_0/\mu_\al}}|v_\al-v|^q=0.
\]
\end{prop}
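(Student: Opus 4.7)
The plan is a concentration-compactness argument. The total $L^q$-mass $\int_{\partial M} u_\al^q\,\ud s_g = 1$ is preserved under the rescaling, so it suffices to show none of it escapes to infinity in the new coordinates. I would proceed in three steps.

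\textbf{Step 1: no loss of mass.} I would establish the identity
\[
\lim_{\al\to\infty}\int_{B_{\delta_0/\mu_\al}} v_\al^q\,\ud s_{g_\al}=1.
\]
The upper bound is immediate from the change of variables $y=x/\mu_\al$: since $q(n-2\sigma)/2=n$, one has $\int_{B_{\delta_0/\mu_\al}} v_\al^q\,\ud s_{g_\al}=\int_{B_{\delta_0}(x_\al)}u_\al^q\,\ud s_g\leq 1$. For the lower bound, fix $R>0$. In Fermi coordinates at $x_\al$, Lemma \ref{taylor expansion of fermi} and the compactness of $(M,g)$ give $g_{ij}(\mu_\al y)\to\delta_{ij}$ uniformly on $B_R$, while \eqref{holder estimate 3} gives $v_\al\to v$ uniformly on $B_R$. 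Hence $\int_{B_R} v_\al^q\,\ud s_{g_\al}\to\int_{B_R}v^q\,\ud x'$ and therefore $\liminf_\al \int_{B_{\delta_0/\mu_\al}} v_\al^q\,\ud s_{g_\al}\geq\int_{B_R} v^q$. Letting $R\to\infty$ and using the normalization $\int_{\R^n} v^q=1$ built into the Liouville classification after \eqref{limt equ} yields the matching lower bound.

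\textbf{Step 2: tail smallness.} Subtracting the local integral from the total,
\[
\int_{B_{\delta_0/\mu_\al}\setminus B_R} v_\al^q\,\ud s_{g_\al}=\int_{B_{\delta_0/\mu_\al}} v_\al^q\,\ud s_{g_\al}-\int_{B_R} v_\al^q\,\ud s_{g_\al}\;\longrightarrow\; 1-\int_{B_R} v^q
\]
as $\al\to\infty$ for each fixed $R$, and the right-hand side tends to $0$ as $R\to\infty$ because $v\in L^q(\R^n)$.

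\textbf{Step 3: conclusion.} Split
\[
\int_{B_{\delta_0/\mu_\al}}|v_\al-v|^q=\int_{B_R}|v_\al-v|^q+\int_{B_{\delta_0/\mu_\al}\setminus B_R}|v_\al-v|^q.
\]
Using $|a-b|^q\leq 2^{q-1}(a^q+b^q)$, the tail is bounded by $2^{q-1}\bigl(\int_{B_{\delta_0/\mu_\al}\setminus B_R}v_\al^q+\int_{\R^n\setminus B_R}v^q\bigr)$, which Step 2 and the integrability of $v^q$ render arbitrarily small uniformly in $\al$ once $R$ is chosen large. With $R$ so fixed, the local piece $\int_{B_R}|v_\al-v|^q$ tends to $0$ by the uniform convergence of $v_\al$ to $v$ on $B_R$, and equivalence of $\ud s_{g_\al}$ and $\ud x'$ on $B_R$. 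Given any $\va>0$, first selecting $R$ then $\al$ delivers the proposition.

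The principal obstacle is Step 1. It relies crucially on two facts that must match exactly: the \emph{a priori} normalization $\int_{\partial M} u_\al^q\,\ud s_g=1$ and the Liouville-type rigidity $\int_{\R^n}v^q=1$ for the limit profile. The strict inequality $\xi_\al<1/S$ prevents a second bubble from appearing, so no mass is lost to another concentration point; the uniqueness of $v$ noted after \eqref{limt equ} further ensures convergence along the full sequence, avoiding a subsequential argument at each stage. Once Step 1 is granted, Steps 2 and 3 are soft.
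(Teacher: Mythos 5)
Your proof is correct and follows essentially the same route as the paper: bound $\int_{B_{\delta_0/\mu_\al}} v_\al^q$ above by the normalization $\int_{\pa M} u_\al^q=1$, use the local $C^{\gamma/2}$-convergence to $v$ on each fixed $B_R$, invoke $\int_{\R^n}v^q=1$ from the Liouville classification, and then split the integral into a bulk piece controlled by convergence and a tail piece controlled by the mass bookkeeping. The paper states the argument in one sweep rather than isolating your Step~1 as a separate identity, but the underlying estimates are identical.
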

\begin{proof}
Note that $v_\al \geq 0$ and
\be\label{1 flw integral converge}
\int_{B_{\delta_0/\mu_\al}}v_\al^q\leq \int_{\pa M}u_\al^q=1.
\ee
For any $\va>0$, choose $R>0$ such that $\int_{\R^n\setminus B_R} v^q(x',0)\,\ud x'\le \va$. It follows from \eqref{holder estimate 3} that $\int_{B_R}|v_\al-v|^q\leq \va$ and $1-\int_{B_R}v_{\al}^q<2\va$ for all $\al$ sufficiently large.
Then
\[
\begin{split}
 &\int_{B_{\delta_0/\mu_\al}}|v_\al-v|^q\\&=\int_{B_{\delta_0/\mu_\al}\cap B_R}|v_\al-v|^q+\int_{B_{\delta_0/\mu_\al}\cap B^c_R}|v_\al-v|^q\\&
\leq \int_{B_{\delta_0/\mu_\al}\cap B_R}|v_\al-v|^q+
2^q\int_{B_{\delta_0/\mu_\al}\cap B^c_R}v_\al^q+2^q\int_{B_{\delta_0/\mu_\al}\cap B^c_R}v^q\\&
\leq\va +2^q(1-\int_{B_R}v_{\al}^q)+ 2^q (1-\int_{B_R}v^q)\leq \va(1+3\cdot 2^{q}),
\end{split}
\]
which finishes the proof.
\end{proof}
\begin{cor}
 \label{cor: concentrate} For all $\delta_1>0$ we have
\[
\lim_{\al \to \infty}\int_{B_{\delta_1}(x_\al)\cap \pa M} u_\al^q=1.
\]
\end{cor}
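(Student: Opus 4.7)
The plan is to transfer the $L^q$ convergence $v_\al \to v$ on $\R^n$ established in Proposition \ref{prop: integral converge} back to $\pa M$ by inverting the change of variables $y = \mu_\al x$. The only subtle point is matching the Euclidean measure used in the rescaled picture with $\mathrm{d}s_g$ on $\pa M$, but the two agree up to a factor $\sqrt{\det g_\al}$ that converges locally uniformly to $1$.

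First I would record the scaling identity. Since $v_\al(x) = \mu_\al^{(n-2\sigma)/2}u_\al(\mu_\al x)$ and $q(n-2\sigma)/2 = n$, a change of variables $y' = \mu_\al x'$ gives
\[
\int_{B_{\delta_0/\mu_\al}} v_\al^q(x',0)\sqrt{\det g_\al(x',0)}\,\mathrm{d}x' = \int_{B_{\delta_0}(x_\al)\cap\pa M} u_\al^q\,\mathrm{d}s_g,
\]
so it suffices to show that the left-hand side tends to $1$ as $\al\to\infty$.

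Next I would combine Proposition \ref{prop: integral converge} with the $L^q$-triangle inequality to get
\[
\Bigl(\int_{B_{\delta_0/\mu_\al}}v_\al^q\,\mathrm{d}x'\Bigr)^{1/q} \longrightarrow \Bigl(\int_{\R^n}v^q\,\mathrm{d}x'\Bigr)^{1/q}=1,
\]
using that $\tilde c(n,\sigma)$ was chosen precisely so that $\int_{\R^n}v^q=1$. To replace the Euclidean measure by $\sqrt{\det g_\al(x',0)}\,\mathrm{d}x'$, I would split the integration at a fixed radius $R$: on $B_R$ the metric factor $\sqrt{\det g_\al}=\sqrt{\det g(\mu_\al x',0)}$ converges uniformly to $1$ (recall that $(x_1,\dots,x_n)$ are normal coordinates on $\pa M$ at $x_\al$, so $g_{ij}(0,0)=\delta_{ij}$), while on $B_{\delta_0/\mu_\al}\setminus B_R$ the integrand is controlled by a uniform bound on $\sqrt{\det g}$ times the tail $\int_{B_{\delta_0/\mu_\al}\setminus B_R} v_\al^q\,\mathrm{d}x'$, which is $o_R(1)+o_\al(1)$ by Proposition \ref{prop: integral converge}. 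Passing $\al\to\infty$ first and then $R\to\infty$ yields
\[
\lim_{\al\to\infty}\int_{B_{\delta_0/\mu_\al}} v_\al^q\,\mathrm{d}s_{g_\al}=1.
\]

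Finally, given any $\delta_1>0$, I would pick $\delta_0\in(0,\delta_1)$ small enough for the Fermi chart at $x_\al$ to contain $B_{\delta_0}(x_\al)\cap\pa M$. The scaling identity then gives $\int_{B_{\delta_0}(x_\al)\cap\pa M}u_\al^q\,\mathrm{d}s_g\to 1$, and sandwiching with $\int_{\pa M}u_\al^q\,\mathrm{d}s_g = 1$ forces
\[
\lim_{\al\to\infty}\int_{B_{\delta_1}(x_\al)\cap\pa M}u_\al^q\,\mathrm{d}s_g = 1.
\]
There is no genuine obstacle here; the only bookkeeping issue is handling the $\sqrt{\det g_\al}$ factor on the large ball $B_{\delta_0/\mu_\al}$, which is why the split into $B_R$ and its complement (with $R$ eventually sent to infinity) is needed.
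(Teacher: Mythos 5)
Your argument is correct and is essentially the paper's intended proof: the authors simply assert that Corollary \ref{cor: concentrate} ``follows immediately'' from Proposition \ref{prop: integral converge}, and your write-up supplies the unwritten bookkeeping, namely the change of variables $y' = \mu_\al x'$ (using $q(n-2\sigma)/2 = n$) and the observation that $\sqrt{\det g_\al(x',0)} = \sqrt{\det g(\mu_\al x',0)}$ is uniformly close to $1$ on compacta because $(x_1,\dots,x_n)$ are normal coordinates on $\pa M$ at $x_\al$. The split into $B_R$ and its complement to control the Jacobian against the tail of $v$ is exactly the right way to justify the ``immediate'' step, and the final sandwich with $\int_{\pa M}u_\al^q = 1$ is standard; no gaps.
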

\begin{proof}
It follows immediately from Proposition \ref{prop: integral converge}.
\end{proof}

Let $\tilde G_{\al}$ be the weak solution of
\[
\begin{cases}
-\operatorname{div}_g(\rho^{1-2\sigma}\nabla_g\tilde G_\al)=0, &\quad \mbox{in }M,\\
\dlim_{y\to x\in \pa M} \rho^{1-2\sigma}(y)\frac{\pa }{\pa \nu}\tilde G_\al(y)=\delta_{x_\al}-\frac{1}{|\pa M|_g},&\quad \mbox{on }\pa M,
\end{cases}
\]
constructed in Theorem \ref{thm:nb2}.
We can find a positive constant $C>0$ sufficiently large depending only on $M, g, n, \sigma, \rho$ such that $G_\al:=\tilde G_\al+C\geq 1$ on $\overline M$.

\begin{prop}\label{prop: eqn reduction}
Let $\varphi_\al(x)=\mu_\al^{\frac{n-2\sigma}{2}}G_\al(x)$,
$\tilde{g}_{ij}=\varphi_\al^{\frac{4}{n-2\sigma}}g_{ij}$ and $a=2-\frac{2(n-1)}{n-2\sigma}$. Then $w_\al:=\frac{u_\al}{\varphi_\al}$ satisfies
\be\label{eqn reduction}
\begin{cases}
\operatorname{div}_{\tilde{g}}\Big(\varphi_\al^a\rho^{1-2\sigma}\nabla_{\tilde{g}} w_\al\Big)=0,&\quad \mbox{in } M,\\
\dlim_{y\to \bar x\in \pa M }\varphi_\al^a\rho^{1-2\sigma} \frac{\pa_{\tilde{g}}w_\al(y)}{\pa \tilde{\nu}}\leq \xi_\al w_\al^{q-1}(\bar x),& \quad 
\bar x\in \pa M \setminus \{x_\al \},  
\end{cases}
\ee
for $\al\geq \frac{1}{|\pa M|_g}$.
\end{prop}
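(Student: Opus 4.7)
The plan is to verify the proposition by a direct conformal change computation. Setting $u_\al = \varphi_\al w_\al$, I would use that both $u_\al$ (by \eqref{E-L eqn}) and $\varphi_\al = \mu_\al^{\frac{n-2\sigma}{2}} G_\al$ are killed by $\operatorname{div}_g(\rho^{1-2\sigma}\nabla_g\,\cdot\,)$ in $M$ (the latter because the same is true of $\tilde G_\al$ by Theorem \ref{thm:nb2} and constants are annihilated as well). Expanding $0=\operatorname{div}_g(\rho^{1-2\sigma}\nabla_g(\varphi_\al w_\al))$ with the Leibniz rule and then multiplying by $\varphi_\al$ would yield
\[
\operatorname{div}_g(\rho^{1-2\sigma}\varphi_\al^2\nabla_g w_\al)=0\quad\text{in }M.
\]
Since $\sqrt{\det \tilde g} = \varphi_\al^{2(n+1)/(n-2\sigma)}\sqrt{\det g}$ and $\tilde g^{ij}=\varphi_\al^{-4/(n-2\sigma)}g^{ij}$, a short calculation produces
\[
\operatorname{div}_{\tilde g}(\rho^{1-2\sigma}\varphi_\al^a\nabla_{\tilde g}w_\al)=\varphi_\al^{-\frac{2(n+1)}{n-2\sigma}}\operatorname{div}_g(\rho^{1-2\sigma}\varphi_\al^2\nabla_g w_\al),
\]
because $a=2(1-2\sigma)/(n-2\sigma)=2-2(n-1)/(n-2\sigma)$ is precisely the exponent that makes the interior powers of $\varphi_\al$ add to $2$. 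This gives the bulk equation.

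For the boundary identity I would work in the Fermi coordinates $(x',x_{n+1})$, in which $\nu=-\pa_{x_{n+1}}$ and the outward $\tilde g$-unit normal is $\tilde\nu=\varphi_\al^{-2/(n-2\sigma)}\nu$. Differentiating $w_\al=u_\al/\varphi_\al$, multiplying by $\rho^{1-2\sigma}$, and letting $x_{n+1}\to 0^+$ while inserting
\[
\dlim_{x_{n+1}\to 0^+}\rho^{1-2\sigma}\frac{\pa u_\al}{\pa\nu}=\xi_\al u_\al^{q-1}-\al u_\al,\qquad \dlim_{x_{n+1}\to 0^+}\rho^{1-2\sigma}\frac{\pa\varphi_\al}{\pa\nu}=-\frac{\mu_\al^{(n-2\sigma)/2}}{|\pa M|_g}
\]
(the second valid on $\pa M\setminus\{x_\al\}$ by Theorem \ref{thm:nb2}) gives $\dlim\rho^{1-2\sigma}\pa_\nu w_\al=(\xi_\al u_\al^{q-1}-\al u_\al)/\varphi_\al+\mu_\al^{(n-2\sigma)/2}u_\al/(\varphi_\al^2|\pa M|_g)$. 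Multiplying by $\varphi_\al^{a-2/(n-2\sigma)}$ to pass to $\pa_{\tilde g}w_\al/\pa\tilde\nu$ and invoking the arithmetic identity $a-2/(n-2\sigma)-1=-(q-1)$, which is immediate from $q=2n/(n-2\sigma)$, rearranges the limit as
\[
\varphi_\al^a\rho^{1-2\sigma}\frac{\pa_{\tilde g}w_\al}{\pa\tilde\nu}=\xi_\al w_\al^{q-1}+\Big(\frac{\mu_\al^{(n-2\sigma)/2}}{\varphi_\al|\pa M|_g}-\al\Big)\frac{w_\al}{\varphi_\al^{q-2}}.
\]

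The required inequality then reduces to showing that the second term is nonpositive. Since $G_\al\geq 1$ we have $\varphi_\al\geq \mu_\al^{(n-2\sigma)/2}$, and hence $\mu_\al^{(n-2\sigma)/2}/(\varphi_\al|\pa M|_g)\leq 1/|\pa M|_g\leq \al$ by hypothesis; together with $w_\al\geq 0$ this closes the argument (the inequality is trivial at points where $w_\al$ vanishes). I expect the only real difficulty to be the bookkeeping of the powers of $\varphi_\al$ through the conformal change, which is pinned down by the two algebraic identities isolating $a$ and $q-1$; once those are correctly matched, the geometric input needed is just $G_\al\geq 1$ and the range $\al\geq 1/|\pa M|_g$.
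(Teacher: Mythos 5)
Your proof is correct and proceeds along essentially the same lines as the paper's. The paper expands $\operatorname{div}_{\tilde g}(\varphi_\al^a\rho^{1-2\sigma}\nabla_{\tilde g}(u_\al/\varphi_\al))$ directly by the product rule and the conformal change formula, observing that the cross terms carry the factor $a-2+\frac{2(n-1)}{n-2\sigma}=0$ and that the remaining two divergences vanish because $u_\al$ and $G_\al$ are both annihilated by $\operatorname{div}_g(\rho^{1-2\sigma}\nabla_g\,\cdot\,)$; you instead first derive $\operatorname{div}_g(\rho^{1-2\sigma}\varphi_\al^2\nabla_g w_\al)=0$ from the same two facts and then convert the divergence with the conformal weight $\varphi_\al^{2(n-1)/(n-2\sigma)}$, which is a cleaner phrasing of the same computation. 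The boundary computation — using $\dlim\rho^{1-2\sigma}\pa_\nu\varphi_\al = -\mu_\al^{(n-2\sigma)/2}/|\pa M|_g$ away from $x_\al$, the normal rescaling $\tilde\nu=\varphi_\al^{-2/(n-2\sigma)}\nu$, the exponent identity $a-1-\frac{2}{n-2\sigma}=-(q-1)$, and finally $\varphi_\al\geq\mu_\al^{(n-2\sigma)/2}$, $w_\al\geq0$, $\al\geq1/|\pa M|_g$ to drop the extra term — matches the paper's argument step for step.
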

\begin{proof} The proof follows from some direct computations. For brevity, we drop the subscript $\al$ of $\varphi_\al$ and $u_\al$.
First of all,
\[
\begin{split}
&\operatorname{div}_{\tilde{g}}\Big(\varphi^a\rho^{1-2\sigma}\nabla_{\tilde{g}} \frac{u}{\varphi}\Big)\\&
=\varphi^{a-1-\frac{4}{n-2\sigma}}\operatorname{div}_{g}\Big(\rho^{1-2\sigma}\nabla_{g} u\Big)-
u\varphi^{a-2-\frac{4}{n-2\sigma}}\operatorname{div}_{g}\Big(\rho^{1-2\sigma}\nabla_{g} \varphi\Big)\\&
\quad +\left(a-2+\frac{2(n-1)}{n-2\sigma}\right)\rho^{1-2\sigma}\varphi^{a-2-\frac{4}{n-2\sigma}}
\left(\langle\nabla_g u, \nabla_g\varphi\rangle_g
-u\varphi |\nabla_g \varphi|^2_{g}\right)\\&
=0.
\end{split}
\]
On the other hand, in Fermi coordinate system centered at $\bar x$, 
\[
\begin{split}
&\lim_{x_{n+1}\to 0} \varphi^a \rho^{1-2\sigma}\frac{\pa_{\tilde{g}}}{\pa \tilde{\nu}}(\frac{u}{\varphi})\\&=
\lim_{x_{n+1}\to 0} \varphi^a \rho^{1-2\sigma}\left(\frac{1}{\varphi}\frac{\pa u}{\pa x_{n+1}}-\frac{u}{\varphi^2}\frac{\pa \varphi}{\pa x_{n+1}}\right)
\tilde{g}^{n+1,n+1}\langle \frac{\pa }{\pa x_{n+1}},\tilde{\nu}\rangle_{\tilde g}\\&
=\varphi^{a-1-\frac{2}{n-2\sigma}} (\xi_\al u^{\frac{n+2\sigma}{n-2\sigma}}-\al u)+\varphi^{a-2-\frac{2}{n-2\sigma}}u\mu_\al^{\frac{n-2\sigma}{2}}\frac{1}{|\pa M|}\\&
\leq \xi_\al\left(\frac{u}{\varphi}\right)^{\frac{n+2\sigma}{n-2\sigma}}+\varphi^{a-2-\frac{2}{n-2\sigma}}
u\mu_\al^{\frac{n-2\sigma}{2}}(\frac{1}{|\pa M|_g}-\al)\\&
\leq \xi_\al\left(\frac{u}{\varphi}\right)^{\frac{n+2\sigma}{n-2\sigma}},
\end{split}
\]
provided  $\al\geq \frac{1}{|\pa M|_g}$.
\end{proof}

\begin{prop}\label{prop: up bound on the boundary}
Suppose the assumptions in Proposition \ref{prop: eqn reduction}. Then there exists
some constant $C$ depending only on $M,g, n, \rho,\sigma$ such that for all $\al \geq 1$,
\[
 w_\al \leq C,\quad \mbox{on }\pa M.
\]

\end{prop}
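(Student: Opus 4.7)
The strategy is to argue by contradiction: suppose $M_\al := \sup_{\pa M} w_\al = w_\al(y_\al) \to \infty$ along a subsequence. By Theorem \ref{thm:neumann}, $\varphi_\al(x_\al) = +\infty$, so $w_\al(x_\al) = 0$ and $y_\al \ne x_\al$. Set $d_\al := \mathrm{dist}_g(y_\al, x_\al)$ and, in Fermi coordinates at $x_\al$, $z_\al := \mu_\al^{-1}\mathrm{exp}_{x_\al}^{-1}(y_\al)$, so $|z_\al|\asymp d_\al/\mu_\al$. I split into two cases according to whether $|z_\al|$ stays bounded or diverges.

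Case (a): $|z_\al|\le C$, i.e. $d_\al = O(\mu_\al)$. Since
\[
 w_\al(y_\al) = \frac{u_\al(y_\al)}{\mu_\al^{(n-2\sigma)/2}\,G_\al(y_\al)} = \frac{v_\al(z_\al)}{\mu_\al^{n-2\sigma}\,G_\al(y_\al)},
\]
Theorem \ref{thm:neumann} yields $G_\al(y_\al)\ge A_1 d_\al^{2\sigma-n}-A_0+C_0\ge \tfrac{A_1}{2}d_\al^{2\sigma-n}$ for $\al$ large (since $d_\al\to 0$), so
\[
 w_\al(y_\al)\le \frac{2\,v_\al(z_\al)\,|z_\al|^{n-2\sigma}}{A_1}\le \frac{2\,C^{\,n-2\sigma}}{A_1},
\]
using $v_\al\le 1$ from \eqref{eq 0}. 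This contradicts $M_\al\to\infty$.

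Case (b): $|z_\al|\to\infty$. The goal is the sharp pointwise bound $u_\al(y_\al)=O(\mu_\al^{(n-2\sigma)/2})$; combined with $G_\al\ge 1$, this gives $w_\al(y_\al)=O(1)$. Construct the Neumann function $\tilde G_\al(\cdot,y_\al)$ with source $y_\al$ via Theorem \ref{thm:neumann}, test the weak form \eqref{eq:weak solution} against a suitable additive shift of $\tilde G_\al(\cdot,y_\al)$, and invoke Green's identity to obtain the representation
\[
 u_\al(y_\al) = \bar u_\al + \int_{\pa M}\tilde G_\al(z,y_\al)\bigl(\xi_\al u_\al^{q-1}(z)-\al u_\al(z)\bigr)\,ds_g(z).
\]
The bubble-mass identity $\int_{\pa M} u_\al^{q-1}\,ds_g = \mu_\al^{(n-2\sigma)/2}\bigl(\int_{\R^n}v^{q-1}(z',0)\,dz'+o(1)\bigr)$, following from Proposition \ref{prop: integral converge} together with the integrability of $v^{q-1}$, combined with the kernel bound $\tilde G_\al(z,y_\al)\le C|z-y_\al|^{2\sigma-n}$ from Theorem \ref{thm:neumann}, controls the nonlinear term. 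The linear contribution $\al\int \tilde G_\al u_\al\,ds_g$ is handled via \eqref{almu1} on the bubble concentration region, and on its complement by a local blow-up-and-Liouville argument: any subsequential limit of $\mu_\al^{-(n-2\sigma)/2}u_\al$ sampled at points escaping from $x_\al$ is a nonnegative bounded solution of the degenerate limit equation with zero boundary mass, hence identically zero by the Liouville theorem of \cite{JLX1}.

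The main obstacle is the intermediate regime of Case (b) where $d_\al\to 0$ but $|z_\al|\to\infty$: here the Neumann kernel singularity $|z-y_\al|^{2\sigma-n}$ competes in scale with the bubble's own concentration, so a crude $L^\infty$ bound on $\tilde G_\al$ away from its pole no longer suffices, and a naive estimate of $\int \tilde G_\al u_\al^{q-1}$ would inflate by a factor of $d_\al^{2\sigma-n}$. The remedy is a more delicate pairing of the bubble tail against the singular kernel, using the Harnack inequality of Cabr\'e--Sire \cite{CaS} and Tan--Xiong \cite{TX} applied to $v_\al$ on annuli at the intermediate scale, which recovers precisely the order $\mu_\al^{(n-2\sigma)/2}$ without loss and completes the contradiction.
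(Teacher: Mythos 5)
Your proposal takes a genuinely different route from the paper: the paper proves the $L^\infty$ bound on $w_\al$ by a two-step Moser iteration on $w_\al$ itself (Step~1 gets an $L^{s_0}$ bound with $s_0>q$ off a ball of radius $\mu_\al/\delta_2$ using the $L^q$-smallness from Proposition~\ref{prop: integral converge} and the weighted trace inequality of Theorem~\ref{thm: A1}; Step~2 iterates), whereas you aim at a Neumann-function representation formula for $u_\al(y_\al)$ at the maximum point $y_\al$ of $w_\al$ plus a case analysis on the rescaled distance $|z_\al|$. Your Case~(a) is correct and is essentially the same observation the paper makes at the very end of its proof (the estimate $\varphi_\al\geq C^{-1}\mu_\al^{-(n-2\sigma)/2}$ on $B_{2\mu_\al/\delta}(x_\al)$).

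However, Case~(b) has a genuine gap that I do not see how to close without already knowing the conclusion. The ``bubble-mass identity'' $\int_{\pa M}u_\al^{q-1}\,\ud s_g=\mu_\al^{(n-2\sigma)/2}\bigl(\int_{\R^n}v^{q-1}+o(1)\bigr)$ does \emph{not} follow from Proposition~\ref{prop: integral converge}. That proposition gives $\int_{B_{\delta_0/\mu_\al}}|v_\al-v|^q\to 0$, which is an $L^q$ statement on a domain of radius $\sim\mu_\al^{-1}$. To pass to $\int v_\al^{q-1}$ you would use H\"older on $B_{\delta_0/\mu_\al}\setminus B_R$, giving a bound $|B_{\delta_0/\mu_\al}|^{1/q}\big(\int v_\al^q\big)^{(q-1)/q}\lesssim\mu_\al^{-n/q}$; but $n/q=(n-2\sigma)/2$, so after undoing the scaling the error in $\int_{B_\delta}u_\al^{q-1}$ is $O(1)$ rather than $o(\mu_\al^{(n-2\sigma)/2})$ --- exactly the same order as the main term. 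In other words, $L^q$ control on an expanding domain cannot see the $L^{q-1}$ mass, and the identity is precisely the kind of optimal tail estimate that is a \emph{consequence} of Proposition~\ref{prop: up bound on the boundary} (or of Corollary~\ref{cor: up bound on the boundary}), not an input to it. The same circularity afflicts the kernel estimate: to control $\int\tilde G_\al(z,y_\al)u_\al^{q-1}(z)\,\ud s_g$ one must handle the singularity of $|z-y_\al|^{2\sigma-n}$ against $u_\al^{q-1}$ near $y_\al$, which needs the pointwise decay of $u_\al$ away from $x_\al$, i.e.\ the desired conclusion. Finally, the proposed remedy via the Cabr\'e--Sire/Tan--Xiong Harnack inequality ``on annuli at the intermediate scale'' is too vague to be checked: Harnack compares $\max$ and $\min$ of $v_\al$ on an annulus, but without an a priori upper bound for $v_\al$ there (which is what you are trying to prove), this comparison yields no smallness, and the concluding ``local blow-up-and-Liouville'' step is only qualitative and provides no rate. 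This is exactly why the paper avoids the representation-formula route: Moser iteration on $w_\al$ needs only the $L^q$-smallness of $w_\al$ off the concentrating ball and the weighted trace inequality of Theorem~\ref{thm: A1}, both of which are available before any pointwise decay of $u_\al$ is established.
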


\begin{proof} In the following, $C$ denotes some constant which may depend on $M,g, n, \rho,\sigma$ but not on $\al$ and may vary from line to line.

It suffices to prove the proposition for large $\al$, in particular, say, $\al \geq \max\{\frac{1}{|\pa M|_g},1\}$.
Let $\tilde{\rho}:=\varphi_\al^{\frac{2}{n-2\sigma}}\rho$.
Then \eqref{eqn reduction} can be rewritten as
\be\label{eqn for iteration}
\begin{cases}
\operatorname{div}_{\tilde{g}}\Big(\tilde{\rho}^{1-2\sigma}\nabla_{\tilde{g}}
w_\al\Big)=0,&\quad \mbox{in } M,\\
\dlim_{y\to \bar x}\tilde{\rho}^{1-2\sigma} \frac{\pa_{\tilde{g}}w_\al(y)}{\pa \tilde{\nu}}
\leq \xi_\al w_\al^{q-1}(\bar x),&  \quad \mbox{for }\bar x\in \pa M\setminus \{x_\al\},
\end{cases}
\ee
where the limit is taken in the sense explained in the paragraph above \eqref{E-L eqn}. In the following, we shall abuse notation a little by writing $\psi^{-1}(\mathcal{B}^+_{\delta}(0))$ as $\mathcal{B}^+_{\delta}(0)$ where $(\psi^{-1}(\mathcal{B}^+_{\delta}(0)), \psi)$ is a Fermi coordinate of $M$ at $x_\al$, and denoting $B_{\delta}(x_\al)$ as the geodesic ball on $\pa M$ centered at $x_\al$ with radius $\delta$ as before. Note that the interior of $\overline{\mathcal{B}}^+_{\delta}(0)\cap\pa M$ is $B_{\delta}(x_\al)$.

\noindent\textbf{Step 1.} We claim that there exist some constants $0<\delta_2\ll 1$, $s_0>q$ independent of $\al$ such that
\be\label{s2}
 \int_{\pa M\setminus B_{\mu_\al/\delta_2}(x_\al)} w_\al^{s_0}\,\ud s_{\tilde{g}}\leq C.
\ee

For any $\va >0$, it follows from Proposition \ref{prop: integral converge} that there exists a small $\delta_2$ such that
\be\label{s3}
\begin{split}
\int_{\pa M\setminus B_{\mu_\al/\delta_2}(x_\al)}w_\al^q\,\ud s_{\tilde{g}}& =
\int_{\pa M\setminus B_{\mu_\al/\delta_2}(x_\al)}u_\al^q\,\ud s_{g}\\&
=1-\int_{\pa' \mathcal{B}^+_{1/\delta_2}} v_\al^q\\&
\leq \va.
\end{split}
\ee

Without loss of generality, we may assume $10\mu_\al/\delta_2<\delta_0$ where $\delta_0$ is the constant such that the Fermi coordinate system centered at $x_\al$ exists in $\B^+_{\delta_0}(x_\al)$.

We choose $\eta$ to be some cutoff function satisfying
\[
\begin{split}
 &\eta(x)=1\ \mbox{if}\ |x|\geq \mu_\al/\delta_2, \quad \eta(x)=0\ \mbox{if}\ |x|\leq \mu_\al/(2\delta_2),\\&
 \mbox{and } \eta=\eta(|x|)
\mbox{ in the Fermi coordinate system centered at } x_\al.
\end{split}
\]
Multiplying \eqref{eqn for iteration} by $w_\al^k\eta^2$ for $k>1$ and
integrating by parts, we obtain
\[
 \int_{M}\tilde{\rho}^{1-2\sigma}\nabla_{\tilde{g}} w_\al \nabla_{\tilde{g}} (w^k_\al \eta^2)
 \,\ud v_{\tilde{g}}\leq \xi_\al \int_{\pa M}w_\al^{q-1+k}\eta^2\,\ud s_{\tilde{g}}.
\]
By a direct computation, we see that
\[
 \begin{split}
  &\int_{M}\tilde{\rho}^{1-2\sigma}\nabla_{\tilde{g}} w_\al \nabla_{\tilde{g}} (w^k_\al \eta^2)\,\ud v_{\tilde{g}}\\&
= \frac{4k}{(k+1)^2}\int_{M}\tilde{\rho}^{1-2\sigma}|\nabla_{\tilde{g}} (w_\al^{(k+1)/2} \eta)|^2\,\ud v_{\tilde{g}}+\frac{k-1}{(k+1)^2}
\int_{M} w^{k+1}_\al \operatorname{div}_{\tilde{g}}\Big(\tilde{\rho}^{1-2\sigma}\nabla_{\tilde{g}} \eta^2 \Big)\,\ud v_{\tilde{g}}\\&
\quad -\frac{4k}{(k+1)^2}\int_{M} \tilde{\rho}^{1-2\sigma} w^{k+1}_\al |\nabla_{\tilde{g}} \eta|^2 \,\ud v_{\tilde{g}},
 \end{split}
\]
where we have used that $\lim_{\rho\to 0} \tilde{\rho}^{1-2\sigma}  \frac{\pa_{\tilde{g}} \eta^2}{\pa \tilde{\nu}}=0$ since $\eta$ is radial.
In conclusion, we obtain
\be\label{s1}
 \begin{split}
&\int_{M}\tilde{\rho}^{1-2\sigma}|\nabla_{\tilde{g}} (w_\al^{(k+1)/2} \eta)|^2\,\ud v_{\tilde{g}}\\&
\leq -\frac{k-1}{4k}
\int_{M} w^{k+1}_\al \operatorname{div}_{\tilde{g}}\Big(\tilde{\rho}^{1-2\sigma}\nabla_{\tilde{g}} \eta^2 \Big)\,\ud v_{\tilde{g}}
 +\int_{M} \tilde{\rho}^{1-2\sigma} w^{k+1}_\al |\nabla_{\tilde{g}} \eta|^2 \,\ud v_{\tilde{g}}\\&
\quad 
+\frac{\xi_\al(k+1)^2}{4k}
\int_{\pa M} w^{q-1+k}_\al\eta^2\,\ud s_{\tilde{g}}.
 \end{split}
\ee
Since $\tilde{g}^{ij}\sim\mu_\al^2\delta^{ij}$ in
$\B^+_{2\mu_\al/\delta_2}(x_\al)\setminus \B^+_{\mu_\al/(4\delta_2)}(x_\al)$, 
we have
\[
 |\nabla_{\tilde{g}} \eta|+|\nabla_{\tilde{g}}^2 \eta|\leq C.
\]
Since $\eta$ is radial in the Fermi coordinate system, using \eqref{nb4}, \eqref{nb4.1} and \eqref{nb4.2}, we have
\[
|\operatorname{div}_{\tilde{g}}(\tilde{\rho}^{1-2\sigma}\nabla_{\tilde{g}} \eta^2)|\leq C\tilde{\rho}^{1-2\sigma}.
\]
Taking $1<k\leq q-1$ in \eqref{s1} and using Theorem \ref{thm: A1} and Theorem \ref{thm:nb2}, it follows that
\[
 \begin{split}
  &\int_{M}\tilde{\rho}^{1-2\sigma}|\nabla_{\tilde{g}} (w_\al^{(k+1)/2} \eta)|^2\,\ud v_{\tilde{g}}\\&
\leq C(k,\delta_2)+\frac{\xi_\al(k+1)^2}{4k}\int_{\pa M} w^{q-1+k}_\al\eta^2\,\ud s_{\tilde{g}}\\&
\leq C(k,\delta_2)+\frac{\xi_\al(k+1)^2}{4k}\va^{(q-2)/q}\left(\int_{\pa M} (w^{(1+k)/2}_\al\eta)^q\,\ud s_{\tilde{g}}\right)^{2/q}\\&
\leq C(k,\delta_2)+C\va^{(q-2)/q}\int_{M}\tilde{\rho}^{1-2\sigma}|\nabla_{\tilde{g}} (w_\al^{(k+1)/2} \eta)|^2\,\ud v_{\tilde{g}},
 \end{split}
\]
where we used
\be\label{s4}
\begin{split}
&\int_{M\cap (\B^+_{\mu_\al/\delta_2}\setminus \B^+_{\mu_\al/(2\delta_2)})} \tilde{\rho}^{1-2\sigma} w_\al^{k+1}\,\ud v_{\tilde{g}}\\&
\leq C(\delta_2)\int_{M\cap (\B^+_{\mu_\al/\delta_2}\setminus \B^+_{\mu_\al/(2\delta_2)})} (\frac{\rho}{\mu_\al})^{1-2\sigma} (\mu_\al^{(n-2\sigma)/2}u_\al)^{k+1}\mu_\al^{-(n+1)}\,\ud v_{g}\\&
\leq C(\delta_2)\int_{ 1/(2\delta_2)\leq |z|\leq 1/\delta_2}\rho_\al(z)^{1-2\sigma}
v_\al(z)^{k+1}\,\ud v_{g_\al}\quad \mbox{by changing variables}\\&
\leq C(k,\delta_2),
\end{split}
\ee
and $\rho_\al(z)$, $v_\al(z)$ are those in \eqref{eq 0}.

Taking $\va>0$ sufficiently small, we have
\[
 \int_{M}\tilde{\rho}^{1-2\sigma}|\nabla_{\tilde{g}} (w_\al^{(k+1)/2} \eta)|^2\,\ud v_{\tilde{g}}\leq C.
\]
The claim follows immediately from Theorem \ref{thm: A1} in the Appendix.

\medskip

\noindent\textbf{Step 2.} We shall complete the proof by Moser's iterations. Set, for $\delta=\delta_2/10$,
\[
 R_l=\mu_\al\frac{(2-2^{-(l-1)})}{\delta},\quad l=1,2,3,\dots .
\]
We choose $\eta_l$ to be some cutoff function satisfying
\[
\begin{split}
 &\eta_l(x)=1\ \mbox{if}\ |x|\geq R_{l+1}, \quad \eta_l(x)=0\ \mbox{if}\  |x|\leq R_l,\\&
 \mbox{and } \eta_l=\eta_l(|x|)
\mbox{ in the Fermi coordinate system centered at } x_\al.
\end{split}
\]
Since $\tilde{g}^{ij}\sim\mu_\al^2\delta^{ij}$ in $\B^+_{2\mu_\al/\delta_2}(x_\al)
\setminus \B^+_{\mu_\al/(4\delta_2)}(x_\al)$ and $\eta_l$ is radial in the Fermi coordinate system, we have
\[
 |\nabla_{\tilde{g}} \eta_l|\leq C2^l,\quad |\operatorname{div}_{\tilde{g}}(\tilde{\rho}^{1-2\sigma}\nabla_{\tilde{g}} \eta_l^2)|\leq C4^l\tilde{\rho}^{1-2\sigma},
 \quad \mbox{and } \lim_{\rho\to 0} \tilde{\rho}^{1-2\sigma}  \frac{\pa_{\tilde{g}} \eta_l^2}{\pa \tilde{\nu}}=0.
\]
In view of \eqref{s1}, we have
\be\label{s5}
 \begin{split}
  &\int_{M}\tilde{\rho}^{1-2\sigma}|\nabla_{\tilde{g}} (w_\al^{(k+1)/2} \eta_l)|^2\,\ud v_{\tilde{g}}\\&
\leq C4^l\int_{M\cap (\B^+_{R_{l+1}}(x_\al)\setminus \B^+_{R_l}(x_\al))} \tilde{\rho}^{1-2\sigma} w_\al^{k+1}\,\ud v_{\tilde{g}}
+\frac{C(k+1)^2}{k}\int_{\pa M\setminus B_{R_l}(x_\al)} w^{q-1+k}_\al\,\ud s_{\tilde{g}}.
 \end{split}
\ee
Set $r_0=s_0/(q-2)$, where $s_0$ is given in the step 1. It follows H\"older inequality and \eqref{s2} that
\be\label{s6}
\begin{split}
\int_{\pa M\setminus B_{R_l}(x_\al)} w^{q-1+k}_\al\,\ud s_{\tilde{g}}&=
\int_{\pa M\setminus B_{R_l}(x_\al)}w^{q-2}_\al w^{k+1}_\al\,\ud s_{\tilde{g}}\\&
\leq C\left(\int_{\pa M\setminus B_{R_l}(x_\al)} w_\al^{(k+1)r_0/(r_0-1)}\,\ud s_{\tilde{g}}\right)^{(r_0-1)/r_0}.
\end{split}
\ee
Computing as \eqref{s4}, we see that
\[
\begin{split}
 &\int_{M\cap (\B^+_{R_{l+1}}(x_\al)\setminus \B^+_{R_l}(x_\al))} \tilde{\rho}^{1-2\sigma} w_\al^{k+1}\,\ud v_{\tilde{g}}\\&
\leq C^{k+1}\int_{ 2-2^{-(l-1)}\leq \delta |z|\leq 2-2^{-l}}\rho_\al(z)^{1-2\sigma} v_\al(z)^{k+1}\,\ud v_{g_\al}\\&
\leq C^{k+1}\delta^{-1}2^{-l}\max_{\mathcal{B}^+_{2/\delta}}v_\al^{k+1},
\end{split}
\]
and
\[
 \begin{split}
  &\left(\int_{\pa M\setminus B_{R_l}(x_\al)} w_\al^{(k+1)r_0/(r_0-1)}\,\ud s_{\tilde{g}}\right)^{(r_0-1)/r_0}\\&
\geq C^{-(k+1)}\left(\int_{ 1\leq \delta |z'|\leq 2}\rho_\al(z',0)^{1-2\sigma} v_\al(z)^{(k+1)r_0/(r_0-1)}\,\ud s_{g_\al}\right)^{(r_0-1)/r_0}\\&
\geq C^{-(k+1)}\min_{\pa'\mathcal{B}^+_{2/\delta}}v_\al^{k+1}.
 \end{split}
\]
Hence, it follows from \eqref{holder estimate 3} that
\be\label{s7} \begin{split}
               &\left(\int_{M\cap (\B^+_{R_{l+1}}(x_\al)\setminus \B^+_{R_l}(x_\al))}
               \tilde{\rho}^{1-2\sigma} w_\al^{k+1}\,\ud v_{\tilde{g}}\right)^{1/(k+1)}\\&\leq
C\left(\int_{\pa M\setminus B_{R_l}(x_\al)} w_\al^{(k+1)r_0/(r_0-1)}\,\ud s_{\tilde{g}}\right)^{(r_0-1)/r_0(k+1)}
              \end{split}
\ee
It follows from Theorem \ref{thm: A1}, \eqref{s5}, \eqref{s6} and \eqref{s7} that
\be\label{s8}
\begin{split}
& \left(\int_{\pa M\setminus B_{R_{l+1}}(x_\al)} w_\al^{(k+1)q/2}\,\ud s_{\tilde{g}}\right)^{2/(k+1)q}\\&
\leq \left(C4^l+\frac{C(k+1)^2}{k}\right)^{1/(k+1)}\left(\int_{\pa M\setminus B_{R_l}(x_\al)} w_\al^{(k+1)r_0/(r_0-1)}\,\ud s_{\tilde{g}}\right)^{(r_0-1)/r_0(k+1)}.
\end{split}
\ee
Set $\chi:=\frac{r_0-1}{r_0}\cdot \frac{q}{2}=1+\frac{(s_0-q)(q-2)}{2s_0}>1$,
$q_0=\frac{2r_0}{r_0-1}$, $q_{l}=q_{l-1}\cdot  \chi=\chi^{l-1}q$ and $p_l=q_l(r_0-1)/r_0=2\chi^l$ where $l\ge 1$. Taking $k=p_l-1$ in \eqref{s8},
we obtain
\[
 \|w_\al\|_{L^{q_{l+1}}(\pa M\setminus B_{R_{l+1}})}\leq \left(C4^l+\frac{C p_l^2}{p_l-1}\right)^{1/p_l}\|w_\al\|_{L^{q_{l}}(\pa M\setminus B_{R_{l}})}.
\]
Therefore,
\[
\begin{split}
 \|w_\al\|_{L^{q_{l+1}}(\pa M\setminus B_{R_{l+1}})}&\leq \|w_\al\|_{L^{q_{1}}(\pa M\setminus B_{R_{1}})} \prod_{l=1}^\infty \left(C4^l+\frac{C p_l^2}{p_l-1}\right)^{1/p_l}\\&
\leq \|w_\al\|_{L^{p_{1}}(\pa M\setminus B_{R_{1}})} \prod_{l=1}^\infty C^{1/(2\chi^{l})}(4+\chi)^{l/(2\chi^l)}\\&
\leq C\|w_\al\|_{L^{p_{1}}(\pa M\setminus B_{R_{1}})}.
\end{split}
\]
Sending $l$ to $\infty$, we have
\be\label{s9}
\|w_\al\|_{L^\infty(\pa M\setminus B_{2\mu_\al/\delta}(x_\al))}\leq C.
\ee

By the choice of $G_\al$, $\varphi_\al(x)\geq C^{-1}\mu_\al^{-(n-2\sigma)/2}$ for $x\in B_{2\mu_\al/\delta}(x_\al)$.
Hence, for $x\in B_{2\mu_\al/\delta}(x_\al)$,
\be\label{s10}
w_{\al}(x)=\frac{u_\al(x)}{\varphi_\al(x)}\leq C \mu_\al^{(n-2\sigma)/2}u_\al(x)\leq C.
\ee
In view of \eqref{s9} and \eqref{s10}, we completed the proof of  the proposition.
\end{proof}

\begin{cor}\label{cor: up bound on the boundary} There exists a positive constant $C$ depending only on $M,g, n, \rho,\sigma$ such that
\[
 u_\al(x)\leq Cu_\al(x_\al)^{-1}\mathrm{dist}_{\pa M,g}(x,x_\al)^{2\sigma-n},\quad \mbox{for all } x\in \pa M.
\]
\end{cor}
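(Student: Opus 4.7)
The corollary is an immediate consequence of Proposition \ref{prop: up bound on the boundary} combined with the pointwise upper bound on the Neumann function in Theorem \ref{thm:neumann}. The plan is to simply unwind the definitions.

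From Proposition \ref{prop: up bound on the boundary}, one has $u_\al(x)\le C\varphi_\al(x)$ for all $x\in\pa M$, where $\varphi_\al=\mu_\al^{(n-2\sigma)/2}G_\al$ and $G_\al=\tilde G_\al+C_0$ with $\tilde G_\al$ the Neumann function centered at $x_\al$. Recalling that $\mu_\al=u_\al(x_\al)^{-2/(n-2\sigma)}$, so $\mu_\al^{(n-2\sigma)/2}=u_\al(x_\al)^{-1}$, we obtain
\[
u_\al(x)\le C\,u_\al(x_\al)^{-1}\bigl(\tilde G_\al(x)+C_0\bigr),\qquad x\in\pa M.
\]
Next I invoke Theorem \ref{thm:neumann} to bound $\tilde G_\al(x)\le A_2\,\mathrm{dist}_g(x,x_\al)^{2\sigma-n}$.

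To absorb the additive constant $C_0$ into the power-law factor, I use that $\overline M$ (and hence $\pa M$) is compact: the function $\mathrm{dist}_g(\cdot,x_\al)^{2\sigma-n}$ is uniformly bounded below by a positive constant $c_0$ (depending only on the diameter of $\overline M$), because $2\sigma-n<0$. Consequently $C_0\le (C_0/c_0)\,\mathrm{dist}_g(x,x_\al)^{2\sigma-n}$ for every $x\in\pa M$, and combining with the previous display yields
\[
u_\al(x)\le C\,u_\al(x_\al)^{-1}\,\mathrm{dist}_g(x,x_\al)^{2\sigma-n}.
\]
Finally, for $x\in\pa M$ the intrinsic boundary distance $\mathrm{dist}_{\pa M,g}(x,x_\al)$ is comparable, up to a constant depending only on $(M,g)$, to the ambient distance $\mathrm{dist}_g(x,x_\al)$; replacing one by the other changes only the constant $C$, giving the stated inequality.

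There is no real obstacle here: each ingredient (the bound $w_\al\le C$, the estimate on $\tilde G_\al$, the compactness of $\overline M$, and the equivalence of intrinsic and ambient distance on $\pa M$) is already available, so the argument is just a bookkeeping chain of inequalities.
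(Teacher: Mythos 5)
Your proof is correct and takes exactly the route the paper intends: the paper's own proof of this corollary is literally ``It follows immediately from Proposition~\ref{prop: up bound on the boundary},'' and what you have written is precisely the unwinding of that deduction — use $w_\al\le C$ on $\pa M$, substitute $\varphi_\al=u_\al(x_\al)^{-1}(\tilde G_\al+C_0)$, apply the upper bound on $\tilde G_\al$ from Theorem~\ref{thm:neumann} (with constants uniform in $x_\al$), absorb $C_0$ by compactness of $\overline M$ and the negativity of the exponent $2\sigma-n$, and convert ambient to intrinsic boundary distance. All the constants you invoke ($A_2$, $C_0$, the diameter bound, the distance comparison) are indeed uniform in $\al$ by the statements of Theorems~\ref{thm:neumann} and~\ref{thm:nb2}, so the bookkeeping is sound.
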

\begin{proof}
 It follows immediately from Proposition \ref{prop: up bound on the boundary}.
\end{proof}

\section{Proofs of the main theorems}
\label{sec: proof of main thm}

Let $u_\al$ and $x_\al$ be as in
Section \ref{sec: asymp anal}. We will still use Fermi coordinates $x=(x_1,\cdots, x_{n+1})$ centered at $x_\al$.
In this coordinate system, 
\[
\sum_{1\leq i,j\leq n+1}g_{ij}(x)\ud x_i\ud x_j =\ud x_{n+1}^2+\sum_{1\leq i,j\leq n}g_{ij}(x)\ud x_i\ud x_j, \quad \mbox{for }|x|\leq \delta_0,
\]
where $\delta_0>0$ is independent of $\al$.  
Then we have
\be\label{p1}
\begin{cases}
 \mathrm{div}_{g}\Big(\rho(x)^{1-2\sigma}\nabla_{g}u_\al(x)\Big)=0,&\quad \mbox{in }\mathcal{B}_{\delta_0}^{+},\\
-\dlim_{x_{n+1}\to 0^+}\rho(x)^{1-2\sigma}\frac{\pa u_\al}{\pa x_{n+1}}=\xi_\al u_\al^{q-1}(x',0)-\al u_\al(x',0),&\quad \mbox{on }\pa' \mathcal{B}_{\delta_0}^{+}.
\end{cases}
\ee

\begin{prop}\label{prop: interior upbound}
There exists a positive constant $C$ independent of $\al$ such that
\[
u_{\al}(x)\leq Cu_\al(0)^{-1}|x|^{2\sigma-n},\quad \mathcal{B}^+_{10 \al^{-1/2\sigma}}(0).
\]
\end{prop}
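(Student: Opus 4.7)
Plan: The estimate is already known on $\pa M$ by Corollary~\ref{cor: up bound on the boundary}, so the task is to propagate it into the interior of $\B^+_{10\al^{-1/(2\sigma)}}(0)$. I would split into two cases depending on the size of $|x|$ relative to the concentration scale $\mu_\al$.

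For $|x|\le 2\mu_\al$, the maximum principle stated just above \eqref{almu} gives $u_\al(x)\le u_\al(x_\al)=\mu_\al^{-(n-2\sigma)/2}$, while
\[
u_\al(0)^{-1}|x|^{2\sigma-n}\ge \mu_\al^{(n-2\sigma)/2}(2\mu_\al)^{2\sigma-n}=2^{2\sigma-n}\mu_\al^{-(n-2\sigma)/2},
\]
so the desired inequality holds with constant $2^{n-2\sigma}$.

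For $2\mu_\al\le |x|\le 10\al^{-1/(2\sigma)}$, set $r=|x|$ and rescale by $W_r(y)=r^{n-2\sigma}u_\al(0)u_\al(ry)$ on $\B^+_3\setminus\overline{\B^+_{1/3}}$. Using \eqref{p1} together with the identity $u_\al(0)^{2-q}=\mu_\al^{2\sigma}$ (which comes from $u_\al(0)=\mu_\al^{-(n-2\sigma)/2}$ and $(q-2)(n-2\sigma)/2=2\sigma$), a direct computation gives that $W_r$ satisfies $\operatorname{div}_{g_r}(\tilde\rho^{1-2\sigma}\nabla_{g_r}W_r)=0$ with
\[
-\tilde\rho^{1-2\sigma}\pa_{y_{n+1}}W_r=\xi_\al (\mu_\al/r)^{2\sigma}W_r^{q-1}-\al r^{2\sigma}W_r \quad\text{on }\pa'(\B^+_3\setminus\overline{\B^+_{1/3}}),
\]
where $g_r(y)=g(ry)$ and $\tilde\rho(y)=\rho(ry)/r$. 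For $r$ in the stated range, both $(\mu_\al/r)^{2\sigma}\le 2^{-2\sigma}$ and $\al r^{2\sigma}\le 10^{2\sigma}$ are bounded uniformly in $\al$, and Corollary~\ref{cor: up bound on the boundary} transplants to the uniform boundary bound
\[
W_r(y',0)\le C\, r^{n-2\sigma}(r|y'|)^{2\sigma-n}=C|y'|^{2\sigma-n}\le C' \quad\text{for } |y'|\in[1/3,3].
\]

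With $W_r\ge 0$ already uniformly bounded on the flat boundary $\{(y',0):1/3\le |y'|\le 3\}$, the Robin term rewrites as $-\tilde\rho^{1-2\sigma}\pa_{y_{n+1}}W_r=b(y)W_r$ with $b\in L^\infty$ on that portion of $\pa'$. I would then invoke the Harnack inequality of Cabr\'e-Sire~\cite{CaS} (or Tan-Xiong~\cite{TX}) for nonnegative solutions of such $A_2$-weighted elliptic equations with a linear Neumann-type boundary condition, chained along a finite cover of $\B^+_{3/2}\setminus\overline{\B^+_{2/3}}$ by half-balls, to obtain $\sup\le C\inf$ on this connected half-annulus. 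Since the infimum is controlled by the boundary values already established, $W_r\le C''$ on $\B^+_{3/2}\setminus\overline{\B^+_{2/3}}$, and unraveling at $y=x/r\in \pa''\B^+_1$ gives $u_\al(x)\le C u_\al(0)^{-1}|x|^{2\sigma-n}$. The main obstacle is the interplay between the degeneracy of $\rho^{1-2\sigma}$, which vanishes precisely on $\pa M$ where the estimate is needed, and the nonlinear Robin-type boundary condition: handling these jointly forces the use of an up-to-the-boundary Harnack inequality for $A_2$-weighted operators, and the cutoff $|x|\le 10\al^{-1/(2\sigma)}$ is exactly what keeps $\al r^{2\sigma}$ uniformly bounded so that the linearized boundary potential remains $L^\infty$.
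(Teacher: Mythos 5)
Your argument is correct and matches the paper's strategy: transfer the boundary bound from Corollary~\ref{cor: up bound on the boundary} by rescaling at the radial scale $r=|x|$ and then chaining a weighted boundary Harnack inequality (Cabr\'e--Sire/Tan--Xiong) across the half-annulus at scale $1$. The only difference is cosmetic: you normalize by $r^{n-2\sigma}u_\al(0)$, which forces the case split at $|x|\sim\mu_\al$ so that $(\mu_\al/r)^{2\sigma}$ stays bounded, whereas the paper uses the scale-invariant normalization $\phi_\al(x)=r^{(n-2\sigma)/2}u_\al(rx)$, obtaining the uniform boundary bound directly from the corollary together with $u_\al\le u_\al(0)$ without needing the near-origin case.
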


\begin{proof}
By Corollary \ref{cor: up bound on the boundary},
\be\label{p2}
 u_{\al}(x',0)\leq Cu_\al(0)^{-1}|x'|^{2\sigma-n},\quad |x'|\leq \delta_0.
\ee
Let $r:=|\overline{x}|<10 \al^{-1/2\sigma}$, $\phi_\al(x)=r^{\frac{n-2\sigma}{2}}u_\al(r x)$. Then $\phi_\al$ satisfies
\be\label{p3}
\begin{cases}
 \mathrm{div}_{\hat{g}}\Big(\hat{\rho}(x)^{1-2\sigma}\nabla_{\hat{g}}\phi_\al(x)\Big)=0,&\quad \mbox{in }\mathcal{B}_{\delta_0/r}^{+},\\
-\dlim_{x_{n+1}\to 0^+}\hat{\rho}(x)^{1-2\sigma}\frac{\pa \phi_\al}{\pa x_{n+1}}=\xi_\al \phi_\al^{q-1}(x',0)-\al r^{2\sigma} \phi_\al(x',0),
&\quad \mbox{on }\pa' \mathcal{B}_{\delta_0/r}^{+},
\end{cases}
\ee
where $\hat{\rho}(x)=\rho(r x)/r$, $\hat{g}(x)=g_{ij}(r x)\ud x_i\ud x_j$. Since $x_\al=0$ is a maximum point of $u_\al$, it follows from
\eqref{p2} that
\be\label{p5}
\phi_{\al}(x',0)=r^{\frac{n-2\sigma}{2}}u_\al(r x',0)\leq C r^{\frac{n-2\sigma}{2}} (r|x'|)^{-\frac{n-2\sigma}{2}}\le C,\quad \frac{1}{2}< |x'|< 2.
\ee
Applying the Harnack inequality in \cite{CaS} or \cite{TX} and standard Harnack inequality for uniformly elliptic equations to $\phi_\al$ in $\{x:\frac{1}{2}<|x|<2,x_{n+1}>0\}$, we conclude that
\[
 \max_{\mathcal{B}_{3/2}^+\setminus \mathcal{B}_{3/4}^+}\phi_\al \leq C \min_{\mathcal{B}_{3/2}^+\setminus \mathcal{B}_{3/4}^+}\phi_\al.
\]
Hence, by \eqref{p2}
\[
 u_\al(\overline{x})\leq C u(\tilde x', 0)\leq Cu_\al(0)^{-1}|\overline{x}|^{2\sigma-n},
\]
where $|\tilde x'|=|\bar x|$.
By the arbitrary choice of $\overline{x}$,
the proposition follows immediately.
\end{proof}

Let $\mu_\al=u_\al(0)^{-\frac{2}{n-2\sigma}}$, $R_\al=(\al^{1/2\sigma}\mu_\al)^{-1}$,  $g_\al
=g_{ij}(\mu_\al x)\ud x_i\ud x_j$ and $\rho_\al(x)=\frac{\rho(\mu_\al x)}{\mu_\al}$ in
$\mathcal{B}^+_{10 R_\al}$. Set $v_\al(x)=\mu_\al^{\frac{n-2\sigma}{2}}u_\al(\mu_\al x)$
for $x\in \mathcal{B}^+_{10 R_\al}$. It follows that
\be\label{eq 00}
\begin{cases}
 \operatorname{div}_{g_\al}\Big(\rho_\al^{1-2\sigma}\nabla_{g_\al} v_\al\Big)=0,&\quad \mbox{in }\mathcal{B}_{10 R_\al}^+\\
\lim_{x_{n+1}\to 0}\rho_\al^{1-2\sigma}\frac{\pa_{g_\al} v_{\al}}{\pa \nu}=\xi_\al v_\al^{q-1}-\al \mu_\al^{2\sigma} v_\al,& \quad \mbox{on }\pa' \mathcal{B}_{10 R_\al}^+=B_{10 R_\al}\\
v_\al(0)=1,\quad  0< v_\al \leq 1.
\end{cases}
\ee
By Proposition \ref{prop: interior upbound},
\be\label{iup1}
v_\al(x)\leq \frac{C}{1+|x|^{n-2\sigma}},\quad x \in \overline{\mathcal{B}}^+_{10 R_\al}.
\ee

\begin{prop} \label{prop: iupb}
For all $\al\geq 1$, $x\in \mathcal{B}^+_{R_\al}(0)$, we have
\[
\begin{split}
 |\nabla_{x'}v_\al(x',x_{n+1})|&\leq \frac{C}{1+|x|^{n+1-2\sigma}},\\
|\nabla^2_{x'}v_\al(x',x_{n+1})|&\leq \frac{C}{1+|x|^{n+2-2\sigma}},\\
|\pa_{n+1}v_\al(x',x_{n+1})|&\leq \frac{C x_{n+1}^{2\sigma-1}}{1+|x|^{n}}.
\end{split}
\]
\end{prop}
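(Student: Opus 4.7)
The approach is to reduce the pointwise estimate at a point $\bar x\in\mathcal{B}^+_{R_\al}$ with $r:=|\bar x|\ge 1$ to a uniform boundary regularity statement for the degenerate equation \eqref{eq 00} via a scaling of rate $r$, and then invoke the Schauder-type results already used in this section (Theorem \ref{thm: schauder} and Proposition \ref{thm: schauder-1}). The case $r\le 1$ follows at once from these Schauder estimates applied to $v_\al$ on $\mathcal{B}^+_2(0)$, together with $v_\al\le 1$, so I focus on $r>1$.

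Set $V(y):=r^{n-2\sigma}v_\al(ry)$ on $\mathcal{B}^+_{10R_\al/r}(0)$. A direct substitution in \eqref{eq 00} shows that $V$ solves
\[
\operatorname{div}_{\tilde g}\bigl(\tilde\rho^{1-2\sigma}\nabla_{\tilde g}V\bigr)=0,\qquad
\lim_{y_{n+1}\to 0^+}\tilde\rho^{1-2\sigma}\pa_{n+1}V=\xi_\al r^{-2\sigma}V^{q-1}-\al\mu_\al^{2\sigma}r^{2\sigma}V,
\]
with $\tilde g_{ij}(y)=g_{ij}(\mu_\al r y)$ and $\tilde\rho(y)=\rho(\mu_\al r y)/(\mu_\al r)$. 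Since $\mu_\al r\le\mu_\al R_\al=\al^{-1/2\sigma}\to 0$, the metric $\tilde g$ is $C^k$-close to the Euclidean metric, and $\tilde\rho(y)=y_{n+1}+O(\mu_\al r)$ is a smooth defining function with $|\nabla\tilde\rho|=1$ on $\{y_{n+1}=0\}$, with $C^k$-bounds independent of $\al$ and $r$. By \eqref{iup1}, $\|V\|_{L^\infty(\mathcal{B}^+_2(\bar x/r))}\le C$, and the restriction $r\le R_\al$ forces $\al\mu_\al^{2\sigma}r^{2\sigma}\le 1$, so the Neumann datum on the right hand side is uniformly bounded.

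With this set-up, applying the boundary Schauder estimates to $V$ on $\mathcal{B}^+_{3/2}(\bar x/r)$ gives
\[
|\nabla_{y'}V(\bar x/r)|+|\nabla^2_{y'}V(\bar x/r)|+|y_{n+1}^{1-2\sigma}\pa_{n+1}V(\bar x/r)|\le C,
\]
uniformly in $\al$ and in $\bar x$. The second tangential bound is obtained either from higher-order Schauder estimates applied to $V$ directly, or by applying the gradient estimate to $\pa_{y_k}V$ for $k\le n$, which solves a PDE of the same structure with a right hand side produced by differentiating the coefficients of $\tilde g$ and $\tilde\rho$, and this right hand side is uniformly bounded thanks to the $C^k$-closeness above. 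Unscaling via $\pa_{y_i}=r\pa_{x_i}$ and $y_{n+1}=\bar x_{n+1}/r$, and multiplying through by the appropriate power of $r$, converts these three quantities into the three claimed pointwise estimates for $v_\al$ at $\bar x$.

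The main technical obstacle is to invoke Theorem \ref{thm: schauder} and Proposition \ref{thm: schauder-1} in a form whose constants are \emph{genuinely uniform} in the $C^k$-closeness of $(\tilde g,\tilde\rho)$ to the flat model $(\mathrm{Id},y_{n+1})$; this is a standard consequence of their proofs, since every perturbative term from $\tilde g-\mathrm{Id}$ or $\tilde\rho-y_{n+1}$ carries a factor $O(\mu_\al r)$. The normal derivative bound relies additionally on the fact that $\tilde\rho^{1-2\sigma}\pa_{n+1}V$ is, up to $O(\mu_\al r)$, the continuous trace quantity appearing in the boundary condition, whose $L^\infty$ bound is controlled by the Neumann datum and $\|V\|_{L^\infty}$.
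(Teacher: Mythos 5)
Your approach --- rescale at the rate $r=|\bar x|$, use the decay bound \eqref{iup1} to get an $L^\infty$ estimate for the rescaled function, apply the Schauder-type results from the Appendix, and unscale --- is exactly the ``standard rescaling argument'' the paper invokes. The scaling exponent $V(y)=r^{n-2\sigma}v_\al(ry)$ is the right one, the rescaled Neumann datum stays uniformly bounded because $\al\mu_\al^{2\sigma}r^{2\sigma}\le 1$ for $r\le R_\al$, and every coefficient perturbation carries a factor $\mu_\al r\le\al^{-1/2\sigma}\to 0$, which is what makes the constants in Theorem \ref{thm: schauder} and Proposition \ref{thm: schauder-1} uniform.

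One detail needs fixing, though. Since $|\bar x/r|=1$, the half-balls $\mathcal{B}^+_2(\bar x/r)$ and $\mathcal{B}^+_{3/2}(\bar x/r)$ on which you claim $\|V\|_{L^\infty}\le C$ both contain a full neighborhood of the origin, and there $V(y)=r^{n-2\sigma}v_\al(ry)$ is as large as $V(0)=r^{n-2\sigma}v_\al(0)=r^{n-2\sigma}\to\infty$. So the asserted $L^\infty$ bound is false on those sets. The fix is immediate: work instead on a ball of radius $\delta<1$ centered at $\bar x/r$ --- say $\mathcal{B}^+_{1/2}(\bar x/r)$, on which $|y|\ge 1/2$ and hence by \eqref{iup1} one has $V(y)\le C r^{n-2\sigma}/\bigl(1+(r/2)^{n-2\sigma}\bigr)\le C$ --- and then apply the Schauder estimates on the correspondingly shrunk ball. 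With this correction (and the usual split into the interior case $\bar x_{n+1}/r$ bounded away from $0$ and the boundary case, which you implicitly lump together) the argument is sound and matches what the paper intends.
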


\begin{proof}
Given Theorem \ref{thm: schauder} and Proposition \ref{thm: schauder-1}, the proofs follow from \eqref{iup1} and standard rescaling arguments (see, e.g., Proposition 3.1 of \cite{LZ2}).
\end{proof}

\begin{proof}[Proof of Theorem \ref{thm: main thm A}] We complete
the proof of Theorem \ref{thm: main thm A} by checking balance via a Pohozaev type inequality.

It follows from direct computations that
 \be\label{dp1}
\begin{split}
&2\mathrm{div}(x_{n+1}^{1-2\sigma}\nabla v_\al)(\nabla v_\al \cdot x)\\&
=\mathrm{div}\big(2x_{n+1}^{1-2\sigma}(\nabla v_\al \cdot x)\nabla v_\al-x_{n+1}^{1-2\sigma}
|\nabla v_\al|^2x\big)+(n-2\sigma)x_{n+1}^{1-2\sigma}|\nabla v_\al|^2.
\end{split}
\ee
Integrating both sides of \eqref{dp1} over $\mathcal{B}^+_{R_\al}$, we have
\be\label{dp2}
 \begin{split}
  &\int_{\mathcal{B}^+_{R_\al}}\mathrm{div}(x_{n+1}^{1-2\sigma}\nabla v_\al)(\nabla v_\al \cdot x)\,\ud x-\frac{n-2\sigma}{2}
\int_{\mathcal{B}^+_{R_\al}}x_{n+1}^{1-2\sigma}|\nabla v_\al|^2\,\ud x\\&
=\frac{1}{2}\int_{\mathcal{B}^+_{R_\al}}\mathrm{div}\big(2x_{n+1}^{1-2\sigma}(\nabla v_\al \cdot x)
\nabla v_\al-x_{n+1}^{1-2\sigma}|\nabla v_\al|^2x\big)\,\ud x.
 \end{split}
\ee
Integrating by parts, we obtain
\[
 \begin{split}
  &\frac{1}{2}\int_{\mathcal{B}^+_{R_\al}}\mathrm{div}\big(2x_{n+1}^{1-2\sigma}(\nabla v_\al \cdot x)
  \nabla v_\al-x_{n+1}^{1-2\sigma}|\nabla v_\al|^2x\big)\,\ud x\\&
=-\int_{\pa' \mathcal{B}^+_{R_\al}}\Big(\sum_{i=1}^{n}x_i\frac{\pa v_\al}{\pa x_i}\Big)
\frac{\pa v_\al}{\pa x^\sigma_{n+1}}\,\ud x'
 +\int_{\pa'' \mathcal{B}^+_{R_\al}}|x|x_{n+1}^{1-2\sigma}\Big(\big(\frac{\pa v_\al}{\pa \nu}\big)^2
 -\frac{1}{2}|\nabla v_\al|^2\Big)\,\ud S\\&
=-\int_{\pa' \mathcal{B}^+_{R_\al}}\Big(\sum_{i=1}^{n}x_i\frac{\pa v_\al}{\pa x_i}\Big)
\frac{\pa v_\al}{\pa x^\sigma_{n+1}}\,\ud x'
 +\int_{\pa'' \mathcal{B}^+_{R_\al}}\frac{|x|}{2}x_{n+1}^{1-2\sigma}
 \Big(\big(\frac{\pa v_\al}{\pa \nu}\big)^2-|\pa_{\mathrm{tan}} v_\al|^2\Big)\,\ud S,
 \end{split}
\]
where $\frac{\pa v_\al}{\pa x^\sigma_{n+1}}:=\dlim_{x_{n+1}\to 0^+}x_{n+1}^{1-2\sigma}
\frac{\pa v_\al}{\pa x_{n+1}}$ and $\pa_{\mathrm{tan}}$ denotes the tangential
differentiation on $\pa''\mathcal{B}^+_{R_\al}$.
On the other hand,
\[
\begin{split}
 \int_{\mathcal{B}^+_{R_\al}}x_{n+1}^{1-2\sigma}|\nabla v_\al|^2\,\ud x
=&-\int_{\mathcal{B}^+_{R_\al}}\mathrm{div}(x_{n+1}^{1-2\sigma}\nabla v_\al) v_\al\,\ud x\\&
-\int_{\pa' \mathcal{B}^+_{R_\al}}v_\al\frac{\pa v_\al}{\pa x^\sigma_{n+1}}\,\ud x'
+\int_{\pa'' \mathcal{B}^+_{R_\al}}x_{n+1}^{1-2\sigma}v_\al \frac{\pa v_\al}{\pa \nu}\,\ud S.
\end{split}
\]
In summary, we obtain
\be\label{dp3}
 \begin{split}
  &\int_{\mathcal{B}^+_{R_\al}}\mathrm{div}(x_{n+1}^{1-2\sigma}\nabla v_\al)(\nabla v_\al \cdot x)\,\ud x+\frac{n-2\sigma}{2}
\int_{\mathcal{B}^+_{R_\al}}\mathrm{div}(x_{n+1}^{1-2\sigma}\nabla v_\al) v_\al\,\ud x\\&
=B'(R_\al, v_\al)+B''(R_\al, v_\al),
 \end{split}
\ee
where
\[
\begin{split}
 B'(R_\al, v_\al)&=-\frac{1}{2} \int_{\pa' \mathcal{B}^+_{R_\al}}2\Big(\sum_{i=1}^{n}x_i\frac{\pa v_\al}{\pa x_i}\Big)
 \frac{\pa v_\al}{\pa x^\sigma_{n+1}}
+(n-2\sigma)v_\al\frac{\pa v_\al}{\pa x^\sigma_{n+1}}\,\ud x',\\
 B''(R_\al, v_\al)&=\frac{1}{2}\int_{\pa'' \mathcal{B}^+_{R_\al}}|x|x_{n+1}^{1-2\sigma}\Big(\big(\frac{\pa v_\al}{\pa \nu}\big)^2-|\pa_{\mathrm{tan}} v_\al|^2\Big)
+(n-2\sigma)x_{n+1}^{1-2\sigma}v_\al \frac{\pa v_\al}{\pa \nu}\,\ud S.
\end{split}
\]
Note that
\be\label{eq:g-e}
 \begin{split}
&  \mathrm{div}_{g_\al}(\rho_\al^{1-2\sigma}\nabla_{g_\al} v_\al)\\&
=g_\al^{ij}\frac{\pa v_\al}{\pa x_i}\frac{\pa \rho_\al^{1-2\sigma}}{\pa x_j}+\rho_\al^{1-2\sigma}g_\al^{ij}(\frac{\pa^2 v_\al}{\pa x_i\pa x_j}-\Gamma^k_{ij}\frac{\pa v_\al}{\pa x_k})\\&
=\mathrm{div}(x_{n+1}^{1-2\sigma}\nabla v_\al)+\sum_{1\leq i,j\leq n}g_\al^{ij}\frac{\pa v_\al}{\pa x_i}\frac{\pa \rho_\al^{1-2\sigma}}{\pa x_j}+
\Big(\frac{\pa \rho_\al^{1-2\sigma}}{\pa x_{n+1}}-\frac{\pa  x_{n+1}^{1-2\sigma}}{\pa x_{n+1}}\Big)\frac{\pa v_\al}{\pa x_{n+1}}\\&
\quad +\rho_\al^{1-2\sigma}(g_\al^{ij}-\delta^{ij})\frac{\pa^2 v_\al}{\pa x_i\pa x_j}+(\rho_\al^{1-2\sigma}-x_{n+1}^{1-2\sigma})\Delta v_\al -\rho_\al^{1-2\sigma}g_\al^{ij}
\Gamma^k_{ij}\frac{\pa v_\al}{\pa x_k},
 \end{split}
\ee
where $\Gamma^k_{ij}$ is the Christoffel symbol of  $g_\al$.
It is easy to see that
\begin{subequations}\label{dp4}
\begin{align}
|h_\al^{ij}(x)-\delta^{ij}|&\leq C\mu_\al |x|,\label{dp4-a}\\
|\Gamma^k_{ij}|&\leq C \mu_\al,\\
|\rho_\al(x)^{1-2\sigma}-x_{n+1}^{1-2\sigma}|&\leq C\mu_\al x_{n+1}^{2-2\sigma},\\
\left|\frac{\pa \rho_\al(x)^{1-2\sigma}}{\pa x_i}\right|&\leq C\mu_\al x_{n+1}^{1-2\sigma}\quad \mbox{for}\quad i<n+1,\\
\left| \frac{\pa \rho_\al(x)^{1-2\sigma}}{\pa x_{n+1}}-\frac{\pa x_{n+1}^{1-2\sigma}}{\pa x_{n+1}}\right |&\leq C\mu_\al x_{n+1}^{1-2\sigma}\label{dp4-e}.
\end{align}
\end{subequations}
Indeed,
\[
\begin{split}
 |\rho_\al(x)^{1-2\sigma}-x_{n+1}^{1-2\sigma}|&=x_{n+1}^{1-2\sigma}\left|\Big(\frac{\rho(\mu_\al x)}{\mu_\al x_{n+1}}\Big)^{1-2\sigma}-1\right|\\&
=x_{n+1}^{1-2\sigma}\left|\Big(\frac{\mu_\al x_{n+1}+O(\mu_\al x_{n+1})^2}{\mu_\al x_{n+1}}\Big)^{1-2\sigma}-1\right|\\&
\leq C \mu_\al x_{n+1}^{2-2\sigma},
\end{split}
\]
and
\[
\begin{split}
\frac{\pa \rho_\al(x)^{1-2\sigma}}{\pa x_i}&=(1-2\sigma)\rho_\al(x)^{-2\sigma} \Big(\frac{\pa \rho_\al(x)}{\pa x_i}-
\frac{\pa \rho_\al(x',0)}{\pa x_i}\Big)\\&
 =O(1)\mu_\al \rho_\al^{1-2\sigma}\\&
\leq C\mu_\al  x_{n+1}^{1-2\sigma}.
\end{split}
\]
It follows from \eqref{eq 00}, \eqref{dp3}, \eqref{eq:g-e} and \eqref{dp4-a}-\eqref{dp4-e} that
\be\label{dp5}
 \begin{split}
&B'(R_\al, v_\al)+B''(R_\al, v_\al)\\&
\leq C\mu_\al \int_{\mathcal{B}^+_{R_\al}} x_{n+1}^{1-2\sigma}(v_\al+|\nabla v_\al\cdot x|)(|\nabla v_\al|+|x||\nabla_{x'}^2 v_\al|
+x_{n+1}|\Delta v_\al|) \,\ud x.
 \end{split}
\ee
Since $\dlim_{x_{n+1}\to 0}\rho_\al^{1-2\sigma}\frac{\pa_{g_\al} v_\al}{\pa \nu}=-\frac{\pa  v_\al}{\pa x_{n+1}^\sigma}$ on $\pa' \mathcal{B}^+_{R_\al}$,
\[
\begin{split}
  B'(R_\al, v_\al)&= \int_{\pa' \mathcal{B}^+_{R_\al}} \Big(\sum_{i=1}^{n}x_i\frac{\pa v_\al}{\pa x_i}\Big)(\xi_\al v_\al^{q-1}-\al \mu_\al^{2\sigma}v_\al)
+\frac{(n-2\sigma)}{2}(\xi_\al v_\al^{q}-\al \mu_\al^{2\sigma}v_\al^2)\,\ud x'\\&
=\sigma \al \mu_\al^{2\sigma} \int_{\pa' \mathcal{B}^+_{R_\al}} v_\al^2\,\ud x'+\int_{\pa B_{R_\al}}(\frac{\xi_\al }{q}v_\al^q-\frac{\al \mu_\al^{2\sigma}}{2}v_\al^2)R_\al\,\ud S,
\end{split}
\]
where integrations by parts were used in the second equality.
Clearly,
\[
 B''(R_\al, v_\al)=O\left(\int_{\pa''\mathcal{B}_{R_\al}^+}x_{n+1}^{1-2\sigma}(|x||\nabla v_\al|^2+v_\al|\nabla v_\al|)\,\ud S \right).
\]
Therefore, we obtain
\be\label{dp6}
\begin{split}
 \al \mu_\al^{2\sigma} &\int_{\pa' \mathcal{B}^+_{R_\al}} v_\al^2\,\ud x' \\&
\leq C\mu_\al \int_{\mathcal{B}^+_{R_\al}} x_{n+1}^{1-2\sigma}(v_\al+|\nabla v_\al\cdot x|)(|\nabla v_\al|+|x||\nabla_{x'}^2 v_\al|
+x_{n+1}|\Delta v_\al|) \,\ud x\\&
\quad +C \int_{\pa''\mathcal{B}_{R_\al}^+}x_{n+1}^{1-2\sigma}(|x||\nabla v_\al|^2+v_\al|\nabla v_\al|)\,\ud S+ C \int_{\pa B_{R_\al}} \al \mu_\al^{2\sigma}v_\al^2 R_\al\,\ud S.
\end{split}
\ee
Since $\mathrm{div}_{g_\al}(\rho_\al^{1-2\sigma}\nabla_{g_\al} v_\al)=0$ and $g_\al^{i,n+1}=0$ for $i<n+1$, 
\be\label{dp7}
 |\pa^2_{n+1}v_\al(x',x_{n+1})|\leq C(\mu_\al |\nabla v_\al|+|\pa_{x+1}v_\al|x_{n+1}^{-1}+|\nabla^2_{x'}v_\al|). 
\ee
It follows from \eqref{dp6}, \eqref{dp7} and  Proposition \ref{prop: iupb} that
\[
 \begin{split}
 \al \mu_\al^{2\sigma} &\int_{\pa' \mathcal{B}^+_{R_\al}} v_\al^2\,\ud x' \\&
\leq C\mu_\al \int_{\mathcal{B}^+_{R_\al}} x_{n+1}^{1-2\sigma}(v_\al+|\nabla v_\al\cdot x|)(|\nabla v_\al|+|x||\nabla_{x'}^2 v_\al|
) \,\ud x\\&
\quad +C \int_{\pa''\mathcal{B}_{R_\al}^+}x_{n+1}^{1-2\sigma}(\frac{1}{R_\al^{2n+1-4\sigma}}+\frac{x_{n+1}^{2\sigma-1}}{R_\al^{2n-2\sigma}}
+\frac{x_{n+1}^{4\sigma-2}}{R_\al^{2n-1}})\,\ud S+ C \frac{\al \mu_\al^{2\sigma}}{R_\al^{n-4\sigma}}\\&
\leq  C\mu_\al \int_{\mathcal{B}^+_{R_\al}}(\frac{x_{n+1}^{1-2\sigma}}{(1+|x|)^{2n+1-4\sigma}}+\frac{1}{(1+|x|)^{2n-2\sigma}})\,\ud x\\&
\quad + CR_{\al}^{2\sigma-n}\int_{\pa''\mathcal{B}_1}(y_{n+1}^{1-2\sigma}+1+y_{n+1}^{2\sigma-1})\,\ud S + C \frac{\al \mu_\al^{2\sigma}}{R_\al^{n-4\sigma}}\\&
\leq \begin{cases}
      C\mu_\al \ln R_\al +C(\al\mu_\al^{2\sigma} )^{\frac{n-2\sigma}{2\sigma}}+C\al \mu_\al^{2\sigma}R_\al^{4\sigma-n},\quad n=2\sigma+1\\
    C\mu_\al  +C(\al\mu_\al^{2\sigma} )^{\frac{n-2\sigma}{2\sigma}}+C\al \mu_\al^{2\sigma}R_\al^{4\sigma-n}, \quad n>2\sigma+1.
     \end{cases}
\end{split}
\]
For $\sigma=1/2$ and $n=2$, Theorem \ref{thm: main thm A} was proved in \cite{LZ2}.  Hence, we may assume that $n>2\sigma+1$. Since $\sigma\in (0,1/2]$, $n>2\sigma+1\ge 4\sigma$. Therefore,
\[
0< \frac{1}{C}\leq\int_{\pa' \mathcal{B}^+_{R_\al}} v_\al^2\,\ud x' \to 0,\quad \mbox{as }\al \to \infty
\]
which is a contradiction. 
\end{proof}

\begin{proof}[Proof of Theorem \ref{thm: main thm B}]
Since $\pa M$ is totally geodesic, Lemma \ref{taylor expansion of fermi} implies that
\begin{subequations}\label{dp8-1}
\begin{align}
|h_\al^{ij}(x)-\delta^{ij}|&\leq C\mu_\al^2 |x|^2,\\
|\Gamma^k_{ij}|&\leq C \mu_\al^2 |x|.
\end{align}
\end{subequations}
Since $\rho=d(x)+O(d(x)^3)$, it follows that
\begin{subequations}\label{dp8-2}
\begin{align}
|\rho_\al(x)^{1-2\sigma}-x_{n+1}^{1-2\sigma}|&\leq C\mu_\al^2 x_{n+1}^{3-2\sigma},\\
\left|\frac{\pa \rho_\al(x)^{1-2\sigma}}{\pa x_i}\right|&\leq C\mu_\al^2 x_{n+1}^{2-2\sigma},\quad i<n+1,\\
\left| \frac{\pa \rho_\al(x)^{1-2\sigma}}{\pa x_{n+1}}-\frac{\pa x_{n+1}^{1-2\sigma}}{\pa x_{n+1}}\right |&\leq C\mu_\al^2 x_{n+1}^{2-2\sigma}.
\end{align}
\end{subequations}
Similar to \eqref{dp6}, we have 
\be\label{dp9}
\begin{split}
 \al \mu_\al^{2\sigma} &\int_{\pa' \mathcal{B}^+_{R_\al}} v_\al^2\,\ud x' \\&
\leq C\mu_\al^2 \int_{\mathcal{B}^+_{R_\al}} x_{n+1}^{1-2\sigma}(v_\al+|\nabla v_\al\cdot x|)(|x| |\nabla v_\al|+|x|^2|\nabla_{x'}^2 v_\al|
+x^2_{n+1}|\Delta v_\al|) \,\ud x\\&
\quad +C \int_{\pa''\mathcal{B}_{R_\al}^+}x_{n+1}^{1-2\sigma}(|x||\nabla v_\al|^2+v_\al|\nabla v_\al|)\,\ud S+ C \int_{\pa B_{R_\al}} \al \mu_\al^{2\sigma}v_\al^2 R_\al\,\ud S.
\end{split}
\ee
It follows from \eqref{dp7}, \eqref{dp9} and Proposition \ref{prop: iupb} that
\[
 \begin{split}
 \al \mu_\al^{2\sigma} &\int_{\pa' \mathcal{B}^+_{R_\al}} v_\al^2\,\ud x' \\&
 \leq C\mu_\al^2 \int_{\mathcal{B}^+_{R_\al}} x_{n+1}^{1-2\sigma}(v_\al+|\nabla v_\al\cdot x|)(|x| |\nabla v_\al|+|x|^2|\nabla_{x'}^2 v_\al|
) \,\ud x\\&
\quad +C \int_{\pa''\mathcal{B}_{R_\al}^+}x_{n+1}^{1-2\sigma}(|x||\nabla v_\al|^2+v_\al|\nabla v_\al|)\,\ud S+ C \int_{\pa B_{R_\al}} \al \mu_\al^{2\sigma}v_\al^2 R_\al\,\ud S\\ &
\leq  C\mu_\al^2 \int_{\mathcal{B}^+_{R_\al}}\frac{x_{n+1}^{1-2\sigma}}{(1+|x|)^{2n-4\sigma}}\,\ud x+C(\al\mu_\al^{2\sigma} )^{\frac{n-2\sigma}{2\sigma}}+C\al \mu_\al^{2\sigma}R_\al^{4\sigma-n}\\&
\leq C\mu_\al^2+C(\al\mu_\al^{2\sigma} )^{\frac{n-2\sigma}{2\sigma}}+C\al \mu_\al^{2\sigma}R_\al^{4\sigma-n},
\end{split}
\]
provided $n>2+2\sigma$ (i.e., $n\geq 4$).
Therefore,
\[
0< \frac{1}{C}\leq\int_{\pa' \mathcal{B}^+_{R_\al}} v_\al^2\,\ud x' \to 0\quad \mbox{as }\al \to \infty,
\]
which is a contradiction.
\end{proof}

\appendix

\section{Appendix}

\subsection{A trace inequality}

Let $(M,g)$ be a smooth, compact Riemannian manifold of dimension $n+1$ $(n\geq 2)$ with boundary.

\begin{lem}\label{lem:A1}
For $n\ge 2$, there exists some positive constant $C=C(n,\sigma)$ such that for all $u\in H^1(x_{n+1}^{1-2\sigma}, \mathcal{B}_1^+)$, $u\equiv 0$ in an open neighborhood of $x=0$, we have 
 \[
 \left(\int_{\pa' \mathcal{B}_1^+}\frac{|u(x',0)|^q}{|x'|^{2n}}\,\ud x'\right)^{2/q}\leq C\int_{\mathcal{B}_1^+}\frac{x_{n+1}^{1-2\sigma}
|\nabla u|^2}{|x|^{2n-4\sigma}}\,\ud x.
\]
\end{lem}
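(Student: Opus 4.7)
The plan is to reduce Lemma \ref{lem:A1} to the sharp unweighted trace inequality on $\R^{n+1}_+$ (Proposition \ref{prop: 1}) via the Kelvin transform $T(y)=y/|y|^2$. The singular boundary weight $|x'|^{-2n}$ and interior weight $|x|^{4\sigma-2n}$ are, up to the factor $y_{n+1}^{1-2\sigma}$, precisely the Jacobians produced by $T$, so the ``distorted'' trace inequality on $\mathcal{B}_1^+$ should follow from the flat one on $\R^{n+1}_+$ applied to a suitable Kelvin extension of $u$.

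Concretely, I would define $\tilde v\colon\R^{n+1}_+\to\R$ by setting $\tilde v(y)=u(y)$ for $y\in\mathcal{B}_1^+$ and $\tilde v(y)=u(y/|y|^2)$ for $y\in\R^{n+1}_+\setminus\overline{\mathcal{B}_1^+}$. Since $u$ vanishes in a neighborhood of $0$, $\tilde v$ has compact support in $\overline{\R}^{n+1}_+$. On the unit sphere $T$ is the identity, so the two pieces agree there and $\tilde v$ is continuous across $|y|=1$; this continuity is enough to rule out any distributional surface contribution, so the weak gradient of $\tilde v$ coincides with its piecewise classical one and $\tilde v\in W^{1,2}(y_{n+1}^{1-2\sigma},\R^{n+1}_+)$. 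Because $T$ is conformal with $(DT)^{T}(DT)=|y|^{-4}I$, a direct change of variables using $|y|=1/|x|$, $y_{n+1}=x_{n+1}/|x|^2$ and $\ud y=|x|^{-2(n+1)}\ud x$ yields
\[
\int_{\R^{n+1}_+\setminus\mathcal{B}_1^+}y_{n+1}^{1-2\sigma}|\nabla\tilde v|^2\,\ud y=\int_{\mathcal{B}_1^+}\frac{x_{n+1}^{1-2\sigma}|\nabla u|^2}{|x|^{2n-4\sigma}}\,\ud x
\]
together with
\[
\int_{\{|y'|>1\}}|\tilde v(y',0)|^q\,\ud y'=\int_{\pa'\mathcal{B}_1^+}\frac{|u(x',0)|^q}{|x'|^{2n}}\,\ud x'.
\]

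Since $|x|\le 1$ on $\mathcal{B}_1^+$, the remaining interior contribution $\int_{\mathcal{B}_1^+}y_{n+1}^{1-2\sigma}|\nabla u|^2\,\ud y$ is also dominated by the right-hand side of the lemma, so the full energy of $\tilde v$ is bounded by $2\int_{\mathcal{B}_1^+}x_{n+1}^{1-2\sigma}|\nabla u|^2|x|^{4\sigma-2n}\,\ud x$. Applying Proposition \ref{prop: 1} to $\tilde v$ (which is legitimate after a routine mollification, thanks to its compact support and weighted $W^{1,2}$ regularity) and discarding the $\{|y'|<1\}$ part of the boundary $L^q$-norm then gives the lemma with $C=2S(n,\sigma)$. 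The only point that genuinely requires care is the $H^1$-gluing of $\tilde v$ across $|y|=1$; I expect this to be the main (but essentially routine) technical step, handled by the continuity observation above together with a short test-function computation verifying that no surface term arises.
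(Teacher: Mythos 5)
Your proposal is correct and follows essentially the same route as the paper: both rely on the Kelvin transform $y\mapsto y/|y|^2$, under which the weighted energy and boundary integrals on $\mathcal{B}_1^+$ become the flat weighted energy and $L^q$-trace on the exterior of the unit half-ball, and then invoke the sharp trace inequality on $\R^{n+1}_+$ (Proposition~\ref{prop: 1}). The only place where you add something is in making explicit the ``appropriate extension'' of the Kelvin-transformed function to $|y|<1$ that the paper leaves unspecified: you take $\tilde v=u$ on $\mathcal{B}_1^+$, observe that this gives a matching trace on $|y|=1$ (since the Kelvin transform is the identity there), and absorb the additional interior energy $\int_{\mathcal{B}_1^+}x_{n+1}^{1-2\sigma}|\nabla u|^2$ into the right-hand side using $|x|\le 1$ and $2n-4\sigma>0$. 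One small caveat: $u\in H^1(x_{n+1}^{1-2\sigma},\mathcal{B}_1^+)$ need not be continuous, so the precise statement you want is not that $\tilde v$ is continuous across $|y|=1$, but that the two pieces, each lying in the weighted $W^{1,2}$ space on its side, have coinciding traces on the hemisphere $|y|=1$ (which does follow since the defining formulas agree pointwise there); this is the standard hypothesis for the Sobolev gluing you invoke, and it holds in the $A_2$-weighted setting as well. With that cosmetic fix, the proof is sound and yields the lemma with the explicit constant $C=2S(n,\sigma)$.
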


\begin{proof}
By the assumption of $u$, there exists a positive constant $\mu=\mu(u)>0$ such that $u\equiv 0$ for $|x|<\mu $ with $ x_{n+1}>0$.
Consider
\[
 v(y)=u\left(\frac{y}{|y|^2}\right), \quad |y|>1, y_{n+1}>0.
\]
It is easy to see that
\[
 v(y)\equiv 0,\quad \mbox{for all }|y|>1/\mu,\ y_{n+1}>0,
\]
and for some $C(n)>0$,
\[
 \int_{\pa' \mathcal{B}_1^+}\frac{|u(x',0)|^q}{|x'|^{2n}}\,\ud x'=C(n)\int_{|y'|\geq 1 }|v(y',0)|^q\,\ud y',
\]
and
\[
 \int_{\mathcal{B}_1^+}\frac{x_{n+1}^{1-2\sigma}
|\nabla u|^2}{|x|^{2n-4\sigma}}\,\ud x=C(n)\int_{|y|\geq 1, y_{n+1}>0}
y_{n+1}^{1-2\sigma}|\nabla v(y)|^2\,\ud y.
\]
By some appropriate extension of $v$ to $|y|<1$, it follows from \eqref{sharp trace} that
\[
 \int_{|y'|\geq 1, }|v(y',0)|^q\,\ud y'\leq C(n,\sigma) \int_{|y|\geq 1, y_{n+1}>0}
y_{n+1}^{1-2\sigma}|\nabla v(y)|^2\,\ud y.
\]
The proof is completed.
\end{proof}

\begin{lem}\label{lem:A1-1}
For $\delta>0$, there exists $C=C(M,g,n,\sigma, \delta, \rho)>0$ such that for all $x_0\in\pa M$, $u\in H^1(\rho^{1-2\sigma},M\setminus \mathcal{B}_{\delta/2}(x_0))$, we have
\be\label{A2}
\begin{split}
& \left(\int_{\pa M\setminus B_\delta(x_0)} |u(x)|^q\right)^{2/q}+
\int_{ M\setminus \mathcal{B}^+_{\delta}(x_0)} \rho^{1-2\sigma}|u(x)|^2\\&
\leq C\left\{\int_{ M\setminus \mathcal{B}^+_{\delta/2}(x_0)} \rho^{1-2\sigma}|\nabla_g u|^2+
 \int_{\pa M\cap(B_\delta(x_0)\setminus \overline{B}_{\delta/2}(x_0))} |u(x)|^2\right\}.
\end{split}
\ee
\end{lem}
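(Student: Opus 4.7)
\textbf{Plan for Lemma \ref{lem:A1-1}.} The strategy has two steps: reduce the statement to the weighted $L^2$ bulk estimate by applying the global weighted trace inequality (Proposition \ref{prop: weak ineq}) to a cut-off of $u$, and then absorb that bulk term via a weighted Poincar\'e inequality with $L^2$ boundary data on the annulus $\pa M\cap (B_\delta(x_0)\setminus\overline{B}_{\delta/2}(x_0))$.

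\emph{Step 1 (cut-off and trace inequality).} Choose a smooth cut-off $\eta$ on $M$ with $0\le \eta\le 1$, $\eta\equiv 1$ on $M\setminus \mathcal{B}^+_\delta(x_0)$, $\eta\equiv 0$ on $\mathcal{B}^+_{2\delta/3}(x_0)$, and $|\nabla_g \eta|\le C/\delta$. For $u\in H^1(\rho^{1-2\sigma}, M\setminus \mathcal{B}_{\delta/2}(x_0))$, the product $\eta u$ extended by zero into $\mathcal{B}^+_{\delta/2}(x_0)$ lies in $H^1(\rho^{1-2\sigma}, M)$. Applying Proposition \ref{prop: weak ineq} to $\eta u$ and using $|\nabla_g(\eta u)|^2\le 2|\nabla_g\eta|^2 u^2 + 2\eta^2|\nabla_g u|^2$ together with $\mathrm{supp}\,\eta\subset M\setminus \mathcal{B}^+_{\delta/2}(x_0)$ yields
\[
\left(\int_{\pa M\setminus B_\delta(x_0)}|u|^q\,\ud s_g\right)^{2/q}\le C\int_{M\setminus \mathcal{B}^+_{\delta/2}(x_0)}\rho^{1-2\sigma}|\nabla_g u|^2\,\ud v_g + C\int_{M\setminus \mathcal{B}^+_{\delta/2}(x_0)}\rho^{1-2\sigma} u^2\,\ud v_g,
\]
where $C$ depends only on $M, g, n, \sigma, \delta, \rho$.

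\emph{Step 2 (weighted Poincar\'e with boundary data).} I would establish
\[
\int_{M\setminus \mathcal{B}^+_{\delta/2}(x_0)}\rho^{1-2\sigma} u^2\,\ud v_g \le C\int_{M\setminus \mathcal{B}^+_{\delta/2}(x_0)}\rho^{1-2\sigma}|\nabla_g u|^2\,\ud v_g + C\int_{\pa M\cap (B_\delta(x_0)\setminus \overline{B}_{\delta/2}(x_0))} u^2\,\ud s_g
\]
by a standard compactness/contradiction argument. If it fails, there is a sequence $\{u_k\}$ normalized by $\int \rho^{1-2\sigma} u_k^2=1$ while both terms on the right tend to zero. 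Since $\rho^{1-2\sigma}\in A_2$, the embedding $H^1(\rho^{1-2\sigma},M\setminus \mathcal{B}^+_{\delta/2}(x_0))\hookrightarrow L^2(\rho^{1-2\sigma},M\setminus \mathcal{B}^+_{\delta/2}(x_0))$ is compact; the trace map into $L^2$ on the boundary annulus is likewise compact (it factors through $H^\sigma(\pa M)\hookrightarrow L^2(\pa M)$). Extracting a subsequence, one obtains a limit $u_\infty$ whose weighted gradient vanishes, forcing $u_\infty$ to be constant on the connected set $M\setminus \mathcal{B}^+_{\delta/2}(x_0)$ (connected for small $\delta$ since $M$ is connected and $n\ge 2$), and whose trace on the annulus vanishes, forcing that constant to be zero. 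This contradicts $\int \rho^{1-2\sigma} u_\infty^2=1$.

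\emph{Combination.} Adding the estimates from Steps 1 and 2, and using $M\setminus \mathcal{B}^+_\delta(x_0)\subset M\setminus \mathcal{B}^+_{\delta/2}(x_0)$ to bound the second term of the left-hand side of \eqref{A2}, one obtains the lemma. The main (albeit standard) technicality is the pair of compact embeddings invoked in Step 2: the weighted Rellich--Kondrachov theorem on bounded Lipschitz domains with an $A_2$ weight, and the compactness of the boundary trace; both are available by classical results referenced in \cite{Maz} and can be quoted without reworking.
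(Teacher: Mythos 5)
Your proof is correct and rests on the same two pillars as the paper's argument (weighted Rellich compactness for the bulk $L^2$ term, and the global weighted trace inequality of Proposition \ref{prop: weak ineq} for the boundary $L^q$ term), but it organizes them differently. The paper runs a single contradiction: normalize the whole left-hand side of \eqref{A2} to $1$, let the right-hand side tend to zero along a sequence $(x_i, u_i)$, extract a weak limit which must vanish, and then use compactness and the trace inequality to kill both terms on the left. You instead split the work into a cut-off step that transfers the $L^q$ trace to a bulk $L^2$ term plus a gradient term, and then a separate weighted Poincar\'e inequality with boundary data proved by compactness. Your version is more modular and isolates a Poincar\'e estimate that is reusable; the paper's version is shorter and, because the contradiction sequence lets $x_i$ range over $\pa M$, it gets the uniformity of $C$ in $x_0$ for free. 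That uniformity is the one point your write-up glosses over: in Step 2 you fix $x_0$ and the contradiction gives $C=C(x_0)$ a priori; you should either let $x_0$ vary in the contradiction sequence (as the paper does) or appeal to compactness of $\pa M$ and continuity of the constant to pass to a uniform bound. The quoted compactness facts also deserve a word of justification for the truncated domain $M\setminus\mathcal{B}^+_{\delta/2}(x_0)$, since $\rho$ is not globally comparable to $\mathrm{dist}(\cdot,\pa(M\setminus\mathcal{B}^+_{\delta/2}))$ there; one covers the region by a collar of $\pa M$ (where the weighted Rellich theorem, Proposition \ref{sharp sobolev embedding}, applies) and an interior set where the weight is bounded above and below (where the classical Rellich theorem applies). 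With these two points made explicit, your proof is complete.
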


\begin{proof}

We prove \eqref{A2} by contradiction.
Suppose the contrary of \eqref{A2}  that for some $\delta>0$, there exists a
sequence of points $\{x_i\}\in \pa M$, $\{u_i\}\in H^1(\rho^{1-2\sigma},  M\setminus \mathcal{B}^+_{\delta/2}(x_i))$
satisfying
\be\label{A3}
\left(\int_{\pa M\setminus B_\delta(x_i)} |u_i(x)|^q\right)^{2/q}+
\int_{ M\setminus \mathcal{B}^+_{\delta}(x_i)} \rho^{1-2\sigma}|u_i(x)|^2=1,
\ee
but
\be \label{A4}
 \int_{ M\setminus \mathcal{B}^+_{\delta/2}(x_i)} \rho^{1-2\sigma}|\nabla_g u_i|^2+
 \int_{\pa M\cap(B_\delta(x_i)\setminus \overline{B}_{\delta/2}(x_i))} |u_i(x)|^2\leq \frac{1}{i}.
\ee
After passing to some subsequence, $\{u_i\}$ converges weakly to $u$ in
$H^1(\rho^{1-2\sigma},  M\setminus \mathcal{B}^+_\delta(x_i))$. By \eqref{A4}, $u\equiv 0$. It follows from a compact Sobolev embedding in Proposition \ref{sharp sobolev embedding} that
\[
 \int_{ M\setminus \B^+_{\delta}(x_i)} \rho^{1-2\sigma}|u_i(x)|^2\to 0.
\]
By a trace embedding in Proposition \ref{prop: weak ineq}, we also conclude that
\[
 \left(\int_{\pa M\setminus B_\delta(x_i)} |u(x)|^q\right)^{2/q}\to 0.
\]
Therefore, we reach a contradiction to \eqref{A3}.
\end{proof}

\begin{thm}\label{thm: A1}
 There exists some constant $C=C(M,g,\rho, n, \sigma)$ such that for all
 $x_0\in \pa M$, $\mu>0$, $u\in H^1(\rho^{1-2\sigma}, M)$, $u \equiv 0$ in $\{x\in M: \mathrm{dist}(x,x_0)<\mu$\}, we have
\[
 \left(\int_{\pa M}\frac{|u(x)|^q}{\mathrm{dist}(x,x_0)^{2n}}\,\ud s_g\right)^{2/q}\leq C\int_{M}\frac{\rho^{1-2\sigma}
 |\nabla_g u|^2}{\mathrm{dist}(x,x_0)^{2n-4\sigma}}\,\ud v_g.
\]
\end{thm}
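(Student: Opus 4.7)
The strategy is to split the estimate into a local contribution near $x_0$ handled by Lemma~\ref{lem:A1} in Fermi coordinates, and an away contribution handled by Lemma~\ref{lem:A1-1} (which is effective since the singular weights are bounded away from $x_0$). I first fix $\delta_0>0$ depending only on $(M,g,\rho)$ small enough that Fermi coordinates at $x_0$ are defined on $\mathcal{B}^+_{\delta_0}(x_0)$ and the coordinate metric is uniformly close to Euclidean, which gives $\mathrm{dist}_g(\cdot, x_0)\sim|\cdot|$ and $\rho\sim x_{n+1}$ (the latter from $|\nabla_g\rho|=1$ on $\pa M$). Let $\eta$ be a smooth cutoff with $\eta\equiv 1$ on $\mathcal{B}^+_{\delta_0/2}$, supported in $\mathcal{B}^+_{\delta_0}$, and $|\nabla\eta|\leq C/\delta_0$. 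Applying the rescaled Lemma~\ref{lem:A1} to $v:=\eta u$, which vanishes near $x_0$ since $u$ does, and expanding $|\nabla v|^2\leq 2\eta^2|\nabla u|^2+2u^2|\nabla\eta|^2$, yields the local estimate
\[
\left(\int_{B_{\delta_0/2}(x_0)}\frac{|u|^q}{\mathrm{dist}_g^{2n}}\,\ud s_g\right)^{2/q}\leq C\int_M\frac{\rho^{1-2\sigma}|\nabla_g u|^2}{\mathrm{dist}_g^{2n-4\sigma}}\,\ud v_g+C_{\delta_0}\int_{\mathcal{B}^+_{\delta_0}\setminus\mathcal{B}^+_{\delta_0/2}}\rho^{1-2\sigma}u^2\,\ud v_g,
\]
the error term coming from the region where $|\nabla\eta|$ is nonzero and the singular weights are bounded.

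For the boundary piece on $\pa M\setminus B_{\delta_0/2}(x_0)$, since $\mathrm{dist}_g\geq\delta_0/2$ there, $\mathrm{dist}_g^{-2n}$ is bounded, and Lemma~\ref{lem:A1-1} with $\delta=\delta_0/2$ applies, giving
\[
\Big(\int_{\pa M\setminus B_{\delta_0/2}}|u|^q\Big)^{2/q}+\int_{M\setminus\mathcal{B}^+_{\delta_0/2}}\rho^{1-2\sigma}u^2\leq C\int_M\rho^{1-2\sigma}|\nabla_g u|^2+C\int_{B_{\delta_0/2}\setminus B_{\delta_0/4}}u^2.
\]
The gradient term is controlled by $(\mathrm{diam}\,M)^{2n-4\sigma}\int_M\rho^{1-2\sigma}|\nabla_g u|^2/\mathrm{dist}_g^{2n-4\sigma}$, and the interior $L^2$ piece on the left hand side absorbs the error $\int_{\mathcal{B}^+_{\delta_0}\setminus\mathcal{B}^+_{\delta_0/2}}\rho^{1-2\sigma}u^2$ coming from the local estimate. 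Combining the local and away contributions via $(a+b)^{2/q}\leq a^{2/q}+b^{2/q}$ (valid since $q>2$), the whole argument reduces to absorbing the boundary $L^2$ annular term $\int_{B_{\delta_0/2}\setminus B_{\delta_0/4}}u^2$.

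The hard part is exactly this absorption step, where the constants must line up correctly. By H\"older's inequality and $\mathrm{dist}_g\leq\delta_0/2$ on the annulus,
\[
\int_{B_{\delta_0/2}\setminus B_{\delta_0/4}}u^2\leq |B_{\delta_0/2}|^{1-2/q}(\delta_0/2)^{4n/q}\left(\int_{\pa M}\frac{|u|^q}{\mathrm{dist}_g^{2n}}\,\ud s_g\right)^{2/q}\leq C\delta_0^{2n-2\sigma}\,\mathrm{LHS}.
\]
Accounting for the compensating factor $(2/\delta_0)^{2(n-2\sigma)}$ that appeared when converting the weighted boundary integral into an unweighted one in the away estimate, the net coefficient in front of $\mathrm{LHS}$ scales as $\delta_0^{2\sigma}$. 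Choosing $\delta_0$ small enough at the outset (depending only on $M,g,\rho$) makes this coefficient strictly less than $1$, so the term is absorbed into $\mathrm{LHS}$ and the desired inequality follows.
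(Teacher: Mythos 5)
There is a genuine gap, and it stems from an unnecessary move: introducing a cutoff before applying Lemma \ref{lem:A1}. That lemma requires $u$ to vanish near $x=0$ but imposes \emph{no} condition on the outer boundary $\pa''\mathcal{B}^+_1$ (its proof handles nontrivial outer data via inversion and an extension operator). You can therefore apply the rescaled Lemma \ref{lem:A1} directly to $u$ on $\mathcal{B}^+_{\delta_0}(x_0)$ in Fermi coordinates and obtain
\[
\left(\int_{B_{\delta_0}(x_0)}\frac{|u|^q}{\mathrm{dist}_g^{2n}}\,\ud s_g\right)^{2/q}\leq C\int_{\mathcal{B}^+_{\delta_0}(x_0)}\frac{\rho^{1-2\sigma}|\nabla_g u|^2}{\mathrm{dist}_g^{2n-4\sigma}}\,\ud v_g
\]
with no error term at all. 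The annular boundary term $\int_{B_{\delta_0}\setminus B_{\delta_0/2}}u^2$ appearing in Lemma \ref{lem:A1-1} is then controlled by H\"older and this inner estimate, which is already dominated by the right-hand side of the theorem; there is nothing left to absorb. This is the clean two-step proof the paper intends.

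By contrast, once you cut off, your local estimate carries the error $C_{\delta_0}\int_{\mathcal{B}^+_{\delta_0}\setminus\mathcal{B}^+_{\delta_0/2}}\rho^{1-2\sigma}u^2$, and you propose to control it via Lemma \ref{lem:A1-1}, whose right-hand side contains $\int_{B_{\delta_0/2}\setminus B_{\delta_0/4}}u^2$, which in turn you control via the local estimate --- circular, so an absorption is unavoidable. Your absorption bookkeeping is then incomplete in two ways. First, the coefficient you track is only $(2/\delta_0)^{2(n-2\sigma)}\cdot\delta_0^{2n-2\sigma}\sim\delta_0^{2\sigma}$ coming from the outer boundary-weight conversion, but the dominant route is through the local-estimate error term: $C_{\delta_0}\sim\delta_0^{-2}\cdot\delta_0^{-(2n-4\sigma)}=\delta_0^{-(2n-4\sigma+2)}$ (from $|\nabla\eta|^2$ and the weight $\mathrm{dist}^{-(2n-4\sigma)}$ restricted to the annulus), and $C_{\delta_0}\cdot\delta_0^{2n-2\sigma}\sim\delta_0^{2\sigma-2}\to\infty$ as $\delta_0\to 0$, so shrinking $\delta_0$ makes things worse. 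Second, and independently fatal, the constant in Lemma \ref{lem:A1-1} is produced by a compactness argument and carries no quantitative dependence on $\delta$; it may blow up as $\delta\to 0$, so even a formally favorable power of $\delta_0$ would give no control. The absorption therefore cannot be closed; dropping the cutoff removes the problem entirely.
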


\begin{proof}
The theorem follows clearly from Lemma \ref{lem:A1} and Lemma \ref{lem:A1-1}.
\end{proof}

\subsection{Regularity results for degenerate elliptic equations}\label{sec of weak solution}

Suppose that $a^{ij}(x)$, $1\leq i,j\leq n+1$, is a smooth positive definite matrix-valued in $\mathcal{B}^+_2$ and
there exists a positive constant $\Lda\geq 1$ such that
\[
\frac{1}{\Lda}|\xi|^2\leq a^{ij}\xi_i\xi_j\leq \Lda|\xi|^2,\quad \forall \xi \in \R^{n+1}
\]
Suppose also that
\[
 a^{i,n+1}=a^{n+1,i}=0 \ \mbox{for}\  i<n+1.
\]
Consider
\be\label{app: main eq}
\begin{cases}
 \frac{\pa}{\pa x_i}\big(x_{n+1}^{1-2\sigma}a^{ij}(x)\frac{\pa}{\pa x_j}u(x)\big)=0,&\quad \mbox{in } \mathcal{B}^+_2,\\
-\dlim_{x_{n+1}\to 0^+}x_{n+1}^{1-2\sigma}a^{n+1,n+1}\frac{\pa u(x)}{\pa x_{n+1}}=b(x')u+f(x'),& \quad \mbox{on } \pa'\mathcal{B}^+_2.
\end{cases}
\ee
We say $u\in H^{1}(x_{n+1}^{1-2\sigma}, \mathcal{B}^+_2)$ is a weak solution of \eqref{app: main eq} if
\[
\int_{\mathcal{B}^+_2}x_{n+1}^{1-2\sigma}a^{ij}(x)\frac{\pa u}{\pa x_j}\frac{\pa \varphi}{\pa x_i}=\int_{\pa' \mathcal{B}^+_2}b(x')u(x',0)\varphi(x',0)+f(x')\varphi(x',0)
\]
for all $\varphi\in C_c^{\infty}(\mathcal{B}^+_2\cup \pa'\mathcal{B}^+_2)$.
\begin{thm}\label{thm: L-infty}
Suppose that $b, f \in L^p(B_2)$ for some $p >\frac{n}{2\sigma}$.
Let $u\in H^{1}(x_{n+1}^{1-2\sigma}, \mathcal{B}^+_2)$ be a weak solution of \eqref{app: main eq}.
Then there exist constants $\gamma\in (0,1)$, $C>0$ depending only
on $n,\sigma, \Lambda, p, \|b\|_{L^p(B_2)}$ such that $u\in C^\gamma(\mathcal{B}^+_1)$ and
\[
 \|u\|_{C^\gamma(\mathcal{B}^+_1)}\leq C(\|u\|_{L^1(x_{n+1}^{1-2\sigma}, \mathcal{B}^+_2)}+\|f\|_{L^p(B_2)}).
\]
\end{thm}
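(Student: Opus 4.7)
The plan is to carry out the classical De Giorgi--Nash--Moser iteration in the degenerate setting determined by the $A_2$-weight $x_{n+1}^{1-2\sigma}$, using the weighted trace inequality of Proposition \ref{prop: 1} to handle the boundary terms. The argument has two main stages: first a Moser iteration giving the $L^\infty$ bound, then a De Giorgi oscillation-decay giving the $C^\gamma$ estimate.

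First I would establish a Caccioppoli inequality up to the boundary. Fix $x_0\in\overline{\pa' \B^+_2}$ and a cutoff $\eta\in C_c^\infty(\B_r(x_0)\cup\pa'\B^+_r(x_0))$ with $r$ suitably small. Testing \eqref{app: main eq} against $\varphi=\eta^2(u-k)^+$ for $k\in\R$, and using ellipticity together with Cauchy--Schwarz, yields
\[
\int_{\B^+_r(x_0)} x_{n+1}^{1-2\sigma}\bigl|\nabla\bigl((u-k)^+\eta\bigr)\bigr|^2\,\ud x \le C\int_{\B^+_r(x_0)} x_{n+1}^{1-2\sigma}\bigl[(u-k)^+\bigr]^2|\nabla\eta|^2\,\ud x+\int_{\pa' \B^+_r(x_0)}\bigl(|b|(u-k)^++|f|\bigr)(u-k)^+\eta^2\,\ud x'.
\]
The boundary integrals are then controlled via Proposition \ref{prop: 1}: one has $\|(u-k)^+\eta\|_{L^q(\pa'\B^+_r)}\le C\|\nabla[(u-k)^+\eta]\|_{L^2(x_{n+1}^{1-2\sigma})}$ with $q=2n/(n-2\sigma)$. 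The term $\int|b|[(u-k)^+]^2\eta^2$ is bounded by $\|b\|_{L^p(B_r(x_0))}\|(u-k)^+\eta\|_{L^{2p/(p-1)}(\pa'\B^+_r)}^2$. Since $p>n/(2\sigma)$ one has $2p/(p-1)<q$, and interpolating with the trace embedding and choosing $r$ small produces a small multiplicative constant allowing absorption into the left-hand side; the term $\int|f|(u-k)^+\eta^2$ is treated analogously and contributes the $\|f\|_{L^p}$ dependence.

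With this Caccioppoli estimate, the $L^\infty$ bound follows by Moser iteration. Combining the Caccioppoli estimate with the interior weighted Sobolev inequality of Fabes--Kenig--Serapioni (applicable since $x_{n+1}^{1-2\sigma}\in A_2$) and the trace inequality at the boundary, one iterates the exponent $\beta\to\infty$ in the test functions $\eta^2(u^+)^{2\beta+1}$ (and similarly for $u^-$) to obtain
\[
\|u\|_{L^\infty(\B^+_{3/2})}\le C\bigl(\|u\|_{L^2(x_{n+1}^{1-2\sigma},\B^+_{7/4})}+\|f\|_{L^p(B_{7/4})}\bigr).
\]
A standard interpolation on nested balls then upgrades the $L^2$-weighted norm to the $L^1$-weighted norm stated in the theorem. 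For the $C^\gamma$ estimate, I would run the Caccioppoli argument at the level $k=\tfrac12(\sup_{\B^+_r}u+\inf_{\B^+_r}u)$ for both $u$ and $-u$, and invoke an $A_2$-weighted De Giorgi second lemma (available from the weighted Poincaré and isoperimetric inequalities of Fabes--Kenig--Serapioni, adapted to half-balls with the Robin-type condition) to obtain a uniform decay
\[
\operatorname{osc}_{\B^+_{r/2}(x_0)}u \le \theta\operatorname{osc}_{\B^+_r(x_0)}u+Cr^{\alpha}\bigl(\|f\|_{L^p(B_r)}+\|u\|_{L^\infty}\bigr)
\]
with $\theta\in(0,1)$ and $\alpha=2\sigma-n/p>0$. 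Iterating this on dyadic balls, and combining with the interior Hölder regularity from Fabes--Kenig--Serapioni, gives $u\in C^\gamma(\B^+_1)$ with $\gamma=\min(\alpha,\log_2(1/\theta))$.

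The main obstacle will be the absorption of the boundary term involving $b$ in Step 2. The exponent $p>n/(2\sigma)$ is sharp precisely because the Hölder conjugate of $q/2=n/(n-2\sigma)$ is exactly $n/(2\sigma)$; the strict inequality is what provides the small multiplicative constant needed for absorption into the Dirichlet-type energy controlled by Proposition \ref{prop: 1}. This is the analogue, in the degenerate Robin setting on the boundary, of the classical threshold $p>n/2$ for lower-order terms in interior uniformly elliptic theory.
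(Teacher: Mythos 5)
Your proposal is correct and follows essentially the same route as the paper, which delegates the proof to a modification of Proposition 2.4 of \cite{JLX1} via "standard Moser iteration techniques." The Caccioppoli estimate up to $\pa'\mathcal{B}^+$, absorption of the $b$-term through the weighted trace Sobolev inequality using $p>n/(2\sigma)$, the weighted Moser iteration combined with the interior Fabes--Kenig--Serapioni theory, and the De Giorgi oscillation-decay step for the H\"older exponent are exactly the ingredients of that reference.
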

\begin{proof}
It follows from a modification of the proof of Proposition 2.4 in \cite{JLX1}, which uses standard Moser iteration techniques.
\end{proof}

\begin{thm}\label{thm: schauder}
Suppose that $b, f \in C^\beta(B_2)$ for some $0<\beta\notin\mathbb{N}$. Let
$u\in H^{1}(x_{n+1}^{1-2\sigma}, \mathcal{B}^+_2)$ be a weak solution of \eqref{app: main eq}.
Suppose that $2\sigma+\beta$ is not an integer. Then $x_{n+1}^{1-2\sigma}\frac{\pa u(x)}{\pa x_{n+1}}\in C(\overline{\mathcal{B}^+_1})$,
and $u(\cdot, 0)\in C^{2\sigma+\beta}(B_1)$. Moreover,
\[
 \left|x_{n+1}^{1-2\sigma}\frac{\pa u(x)}{\pa x_{n+1}}\right|_{C(\overline{\mathcal{B}^+_1})}+
 \|u(\cdot, 0)\|_{C^{2\sigma+\beta}(B_1)}\leq C(\|u\|_{L^2(x_{n+1}^{1-2\sigma}, \mathcal{B}^+_2)}+\|f\|_{C^\beta(B_2)}),
\]
where $C>0$ depending only on $n,\sigma, \Lambda, \beta, \|b\|_{C^\beta(B_2)}$.
\end{thm}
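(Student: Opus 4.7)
\medskip

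\noindent\textbf{Proof proposal for Theorem \ref{thm: schauder}.}

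The plan is to combine the Hölder estimate of Theorem \ref{thm: L-infty} with a freezing-of-coefficients perturbation scheme around each boundary point $x_0\in B_1$, reducing locally to the constant-coefficient model $\operatorname{div}(x_{n+1}^{1-2\sigma}\nabla w)=0$ with an inhomogeneous weighted Neumann condition. First I would apply Theorem \ref{thm: L-infty} to get $u\in C^{\gamma}(\overline{\mathcal{B}^+_{3/2}})$ and hence $bu+f\in C^{\min(\gamma,\beta)}(B_{3/2})$. A standard covering argument reduces the theorem to a purely local Schauder-type estimate at a single boundary point.

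Fix $x_0\in B_1$ and freeze the coefficients at $(x_0,0)$. Since $a^{i,n+1}(x_0)=a^{n+1,i}(x_0)=0$ for $i<n+1$ and $(a^{ij}(x_0))_{i,j\leq n}$ is symmetric positive definite, a linear change of variables in the tangential directions $x'$, combined with a harmless rescaling in $x_{n+1}$ that preserves the weight $x_{n+1}^{1-2\sigma}$, reduces the constant coefficient operator to $\operatorname{div}(x_{n+1}^{1-2\sigma}\nabla \cdot)$. For this model operator in a half-ball, with weighted Neumann data $\varphi\in C^\beta$ on the flat boundary, the Poisson extension formula \eqref{eq:poisson} yields the representation and the corresponding Schauder estimate: solutions $w$ with $-\lim_{x_{n+1}\to 0^+}x_{n+1}^{1-2\sigma}\partial_{n+1}w=\varphi$ satisfy $w(\cdot,0)\in C^{2\sigma+\beta}$ (this is the fractional Schauder gain, equivalent to Schauder estimates for $(-\Delta)^\sigma$) and $x_{n+1}^{1-2\sigma}\partial_{n+1}w\in C(\overline{\mathcal{B}^+_{1/2}})$; the non-integrality of $2\sigma+\beta$ precisely avoids the logarithmic borderline. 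The Liouville-type classification (Theorem 1.5 of \cite{JLX1}) together with a blow-up argument gives polynomial approximation of $w$ at the boundary at any admissible order.

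Next I would run a Campanato-type iteration: on $\mathcal{B}^+_r(x_0)$, approximate $u$ by the solution $w_r$ of the frozen homogeneous problem matching $u$ on $\partial''\mathcal{B}^+_r(x_0)$ and with weighted Neumann data equal to $b(x_0)u(x_0,0)+f(x_0)$. The difference $u-w_r$ satisfies an equation whose coefficients and boundary data are of size $O(r^{\min(\gamma,\beta)})$ times the local oscillation of $u$, by \eqref{app: main eq}, the Hölder regularity of $a^{ij}, b, f$ and the already established Hölder regularity of $u$. Testing $u-w_r$ and using an energy estimate on the weighted half-ball gives the scale-invariant decay $\sup_{\mathcal{B}^+_r(x_0)}|u-P_r|\leq Cr^{2\sigma+\beta}$ for a polynomial $P_r$ of degree $\lfloor 2\sigma+\beta\rfloor$; iterating this on a geometric sequence of radii yields $u(\cdot,0)\in C^{2\sigma+\beta}(B_1)$. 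Continuity of $x_{n+1}^{1-2\sigma}\partial_{n+1}u$ up to $\{x_{n+1}=0\}$ falls out of the same iteration, since the corresponding weighted normal derivative of $w_r$ is constant and controls the limit.

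The main obstacle will be the careful bookkeeping around the degenerate weight. One has to justify the convergence $\lim_{x_{n+1}\to 0^+}x_{n+1}^{1-2\sigma}\partial_{n+1}u(x',x_{n+1})$ pointwise and uniformly, which is subtle because $\partial_{n+1}u$ itself blows up like $x_{n+1}^{2\sigma-1}$; this is handled by the Poisson representation for the model problem and the decay of $u-w_r$ in the weighted norm. Another technical point is that the rescalings $u_r(x):=u(x_0+rx)$ must be normalized by the appropriate Campanato seminorm compatible with the weight $x_{n+1}^{1-2\sigma}$ so that the bootstrap closes; for this one verifies that the seminorm $\sup_{r}r^{-(2\sigma+\beta)}\inf_{P}\|u-P\|_{L^\infty(\mathcal{B}^+_r(x_0))}$ is controlled by $\|u\|_{L^2(x_{n+1}^{1-2\sigma},\mathcal{B}^+_2)}+\|f\|_{C^\beta(B_2)}$, absorbing lower-order terms by a standard interpolation. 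A clean execution of this argument is already available in Proposition 2.8 and the subsequent estimates of \cite{JLX1}, to which one can reduce after the preceding freezing and change of variables; the modifications needed to incorporate the lower-order term $b(x')u$ are routine once $u\in C^\gamma$ has been established.
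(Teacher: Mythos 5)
Your proposal follows the same route the paper takes: the paper's proof of Theorem~\ref{thm: schauder} is a one-line deferral to the Schauder machinery of \cite{JLX1} (Theorem~2.3 and Lemma~2.3 there), and your outline --- a preliminary H\"older estimate from Theorem~\ref{thm: L-infty}, freezing of coefficients and linear change of variables to reduce to $\operatorname{div}(x_{n+1}^{1-2\sigma}\nabla\cdot)$, Poisson representation for the model, and a Campanato-type iteration --- is exactly the standard implementation of that reduction, with the final technical steps again deferred to \cite{JLX1}. So there is no disagreement of method.

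Two points should be tightened. First, the appeal to ``the Liouville-type classification (Theorem 1.5 of \cite{JLX1})'' is a misattribution: that result classifies nonnegative global solutions of the \emph{semilinear} boundary problem $-\lim_{x_{n+1}\to 0^+}x_{n+1}^{1-2\sigma}\partial_{n+1}v=v^{(n+2\sigma)/(n-2\sigma)}$ and has no role in a linear Schauder scheme. What the polynomial-approximation step actually needs is either the explicit Poisson kernel \eqref{eq:poisson} (which you also invoke, and which suffices) or a \emph{linear} Liouville statement for entire solutions of $\operatorname{div}(x_{n+1}^{1-2\sigma}\nabla w)=0$ with $\lim x_{n+1}^{1-2\sigma}\partial_{n+1}w=0$ and polynomial growth. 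Second, after Theorem~\ref{thm: L-infty} you only have $bu+f\in C^{\min(\gamma,\beta)}(B_{3/2})$, so a single Campanato pass produces $u(\cdot,0)\in C^{2\sigma+\min(\gamma,\beta)}$, not $C^{2\sigma+\beta}$; when $\gamma<\beta$ one must bootstrap, gaining $2\sigma$ in the boundary H\"older exponent at each stage (and upgrading $bu$ accordingly) until the Neumann datum is genuinely $C^\beta$, after which the stated estimate follows. Both points are routine and are indeed carried out in \cite{JLX1}, but since your write-up aims to make the argument explicit, they should be stated rather than implied.
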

\begin{proof}
It follows from modifications of the proofs of Theorem 2.3 and Lemma 2.3 in \cite{JLX1}.
\end{proof}

\begin{prop}\label{thm: schauder-1}
Let $b, f \in C^k(B_2)$, $u\in H^{1}(x_{n+1}^{1-2\sigma}, \mathcal{B}^+_2)$ be a weak solution of \eqref{app: main eq}, where $k$ is a positive integer.
Then we have
\[
 \sum_{j=1}^k\|\nabla_{x'}^j u\|_{L^{\infty}(\mathcal B_1^+)}\leq C(\|u\|_{L^2(x_{n+1}^{1-2\sigma}, \mathcal{B}^+_2)}+\|f\|_{C^k(B_2)}),
\]
where $C>0$ depending only on $n,\sigma, \Lambda, \beta, \|b\|_{C^k(B_2)}$.
\end{prop}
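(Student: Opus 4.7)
The key structural observation is that the weight $x_{n+1}^{1-2\sigma}$ and the coefficient condition $a^{i,n+1}=a^{n+1,i}=0$ (for $i<n+1$) are invariant under translations in the tangential directions $x_1,\dots,x_n$. So differencing the equation in any tangential direction produces an equation of the same degenerate structure. My plan is to induct on $k$ using tangential difference quotients together with Theorem~\ref{thm: L-infty} and Theorem~\ref{thm: schauder}.

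\textbf{Base case $k=1$.} Fix $l\in\{1,\dots,n\}$ and set $v_h(x)=h^{-1}\bigl(u(x+he_l)-u(x)\bigr)$ for small $|h|$. Shrinking the ball slightly, $v_h\in H^1(x_{n+1}^{1-2\sigma},\mathcal B^+_{3/2})$, and a direct subtraction shows
\[
\begin{cases}
\dfrac{\pa}{\pa x_i}\bigl(x_{n+1}^{1-2\sigma}a^{ij}(x)\dfrac{\pa v_h}{\pa x_j}\bigr)
=-\dfrac{\pa}{\pa x_i}\bigl(x_{n+1}^{1-2\sigma}(\Delta_h^l a^{ij})(x)\,\pa_j u(\cdot+he_l)\bigr), & \mathcal B^+_{3/2},\\[4pt]
-\dlim_{x_{n+1}\to 0^+}x_{n+1}^{1-2\sigma}a^{n+1,n+1}\pa_{x_{n+1}}v_h
= b(\cdot+he_l)\,v_h+(\Delta_h^l b)\,u(\cdot+he_l)+\Delta_h^l f, & \pa'\mathcal B^+_{3/2},
\end{cases}
\]
where $\Delta_h^l$ denotes the tangential difference quotient. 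Since $a^{ij}\in C^\infty$ and $b,f\in C^k$, the quantities $\Delta_h^l a^{ij}$, $\Delta_h^l b$, $\Delta_h^l f$ are uniformly bounded in $h$. Moreover $u\in L^\infty(\mathcal B^+_{7/4})$ by Theorem~\ref{thm: L-infty}. I then apply a minor divergence-form variant of Theorem~\ref{thm: L-infty} (whose proof is the same Moser iteration, using Proposition~\ref{prop: 1} for the boundary term and the $A_2$-weighted Sobolev inequality in the interior) to obtain a bound
\[
\|v_h\|_{L^\infty(\mathcal B^+_1)}\le C\bigl(\|u\|_{L^2(x_{n+1}^{1-2\sigma},\mathcal B^+_2)}+\|f\|_{C^1(B_2)}\bigr),
\]
uniformly in $h$. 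Sending $h\to 0$ gives the $L^\infty$ bound on $\pa_{x_l}u$ for each $l\le n$.

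\textbf{Inductive step.} Suppose the claim holds for all orders up to $j-1<k$. Applying the difference-quotient argument again to the equation satisfied by a tangential derivative of order $j-1$, or equivalently differentiating the equation $j$ times in the tangential variables, I see that $\nabla^j_{x'}u$ solves a divergence-form degenerate equation of the same type. The interior source is a linear combination of terms of the form $\pa_i\bigl(x_{n+1}^{1-2\sigma}(\nabla^m_{x'}a^{pq})\nabla^{j-m}_{x'}\pa_q u\bigr)$ with $1\le m\le j$, which is admissible for the divergence-form Moser iteration since the factors $\nabla^m_{x'}a^{pq}$ are smooth and $\nabla^{j-m}_{x'}u$ is bounded by the inductive hypothesis (for $m\ge 1$), while the $m=j$ term involves only $\pa_q u$, handled as in the base case. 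The new boundary data consists of $b\,\nabla^j_{x'}u$ plus lower-order terms built from $\nabla^{\le j}_{x'}b$, $\nabla^{\le j}_{x'}f$, and $\nabla^{\le j-1}_{x'}u$, all in $L^\infty$. One more application of the same $L^\infty$ estimate closes the induction.

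\textbf{Main obstacle.} The technical point is that \eqref{app: main eq} as stated carries no interior source, whereas the tangentially differenced equation does. The required extension of Theorem~\ref{thm: L-infty}, allowing a divergence-form right-hand side $\pa_i(x_{n+1}^{1-2\sigma}F^i)$ with $F^i\in L^\infty$ (or with $F^i$ bounded in $L^p_{\mathrm{weighted}}$ for $p$ large, which is what the $m=j$ term produces via Theorem~\ref{thm: schauder}), is however a routine modification of the proof in \cite{JLX1}: one simply absorbs the source term at each stage of the Moser iteration using Young's inequality, which is legitimate because $x_{n+1}^{1-2\sigma}$ is an $A_2$ weight. All other ingredients — the uniform boundedness of tangential difference quotients of smooth functions, and the commutation of tangential translation with the weight — are immediate.
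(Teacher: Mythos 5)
The tangential-differencing strategy is plausible and is presumably what the cited Proposition~2.5 of \cite{JLX1} does; the paper itself gives no detail beyond that reference, so the comparison must be with what the argument actually needs. Your treatment of the interior source term produced by differencing is wrong at an essential point, and the error is exactly where the structure condition $a^{i,n+1}=a^{n+1,i}=0$ has to be used.

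After differencing, $v_h$ solves an equation with interior source $-\pa_i\bigl(x_{n+1}^{1-2\sigma}F^i_h\bigr)$ where $F^i_h=(\Delta_h^l a^{ij})\,\pa_j u(\cdot+he_l)$. Because of the structure condition the only nonzero contributions are $i,j\le n$ and $i=j=n+1$, and $F^{n+1}_h=(\Delta_h^l a^{n+1,n+1})\,\pa_{n+1}u(\cdot+he_l)$ contains the \emph{normal} derivative of $u$. By Theorem~\ref{thm: schauder} this behaves like $x_{n+1}^{2\sigma-1}$ near $\pa'\mathcal B^+$, so $F^{n+1}_h\notin L^\infty$, contradicting the hypothesis of the ``divergence-form variant of Theorem~\ref{thm: L-infty}'' you invoke. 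Your fall-back claim that Theorem~\ref{thm: schauder} puts $F^i$ in weighted $L^p$ for $p$ large is also false: $\int_0^1 x_{n+1}^{1-2\sigma}\,x_{n+1}^{p(2\sigma-1)}\,\ud x_{n+1}<\infty$ forces $p<\frac{3-2\sigma}{1-2\sigma}$ when $\sigma<\tfrac12$, while a Moser iteration to $L^\infty$ in the $A_2$-weighted setting needs $p$ above the effective dimension $n+2-2\sigma$; for $n\ge 2$ and small $\sigma$ these ranges are disjoint, so the iteration does not close as you describe. (A secondary issue of the same type: for $i\le n$, $F^i_h$ involves $\nabla_{x'}u$, which a priori is only in $L^2(x_{n+1}^{1-2\sigma})$, so asserting $F^i\in L^\infty$ in the base case is circular. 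That one is repairable by a standard integrability bootstrap, but it should be acknowledged rather than assumed away.)

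The genuine missing idea is to eliminate $\pa_{n+1}u$ from the source using the PDE itself. Set $G:=x_{n+1}^{1-2\sigma}a^{n+1,n+1}\pa_{n+1}u$; the equation gives $\pa_{n+1}G=-\sum_{i,j\le n}\pa_i\bigl(x_{n+1}^{1-2\sigma}a^{ij}\pa_j u\bigr)$, and $G\in L^\infty$ by Theorem~\ref{thm: schauder}. Writing $c=\pa_l a^{n+1,n+1}/a^{n+1,n+1}$, one has
\[
\pa_{n+1}\bigl(x_{n+1}^{1-2\sigma}(\pa_l a^{n+1,n+1})\pa_{n+1}u\bigr)=\pa_{n+1}(cG)=(\pa_{n+1}c)\,G+c\,\pa_{n+1}G,
\]
which re-expresses the problematic normal source as a bounded function plus a tangential divergence whose density involves only $\nabla_{x'}u$. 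This is the step at which the structure condition $a^{i,n+1}=0$ actually pays off, and it is not a ``routine modification'' of Theorem~\ref{thm: L-infty} by Young's inequality; your sketch omits it, so as written the proof has a gap.
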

\begin{proof}
It follows from a modification of the proof of Proposition 2.5 in \cite{JLX1}.
\end{proof}

\subsection{Degenerate elliptic equations with conormal boundary conditions involving measures} 

We start with some Sobolev embeddings. For every $p\in [1,+\infty)$, we define $W^{1,p}(\rho^{1-2\sigma}, M)$ as the closure of $C^\infty(\overline{M})$ under the norm
\[
 \|u\|_{W^{1,p}(\rho^{1-2\sigma}, M)}=\left(\int_{M}\rho^{1-2\sigma}(|u|^p+|\nabla u|^p)\,\ud v_g\right)^{\frac 1p},
\]
where $\ud v_g$ denote the volume form of $(M,g)$. $W^{1,p}(\rho^{1-2\sigma}, M)$ is a Banach space for all $p\in [1,+\infty)$ (see \cite{K}). The following Proposition follows directly from Theorem 8.8 and Theorem 8.12 in \cite{GO}.

\begin{prop}\label{sharp sobolev embedding}
Let $\Omega$ be a bounded domain in $\R^{n+1}$ with Lipschitz boundary $\pa\Omega$. Let $\sigma\in (0,1)$, $1\leq p\leq q<\infty$ with $\frac{1}{n+1}>\frac{1}{p}-\frac{1}{q}$ and $d(x)$ be the distance from $x$ to $\pa\Omega$.

(i) Suppose that $2-2\sigma\le p$. Then $W^{1,p}(d^{1-2\sigma},\Omega)$ is compactly embedded in $L^q(d^{1-2\sigma},\Omega)$ if
\[
\frac{2-2\sigma}{p(n+2-2\sigma)}>\frac{1}{p}-\frac 1q.
\]

(ii) Suppose that $2-2\sigma> p$. Then $W^{1,p}(d^{1-2\sigma},\Omega)$ is compactly embedded in $L^q(d^{1-2\sigma},\Omega)$ if and only if
\[
\frac{1}{n+2-2\sigma}>\frac{1}{p}-\frac 1q.
\]
\end{prop}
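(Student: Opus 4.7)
The plan is to reduce the claim to classical weighted-Sobolev embedding results. The proposition is essentially a specialization of Theorems 8.8 and 8.12 of \cite{GO} to the concrete weight $d^{1-2\sigma}$ on a bounded Lipschitz domain $\Omega$, so the main task is hypothesis verification rather than constructing a new proof from scratch.

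First I would confirm that, for such $\Omega$ and exponent $1-2\sigma \in (-1,1)$, the weight $d^{1-2\sigma}$ is admissible in the sense of \cite{GO}: it is a power of the distance to $\partial\Omega$, the condition $1-2\sigma > -1$ gives local integrability near $\partial\Omega$, and the weight lies in a sufficiently good Muckenhoupt-type class for the Banach space $W^{1,p}(d^{1-2\sigma},\Omega)$ (defined here as the closure of $C^\infty(\overline\Omega)$) to coincide with the distributional weighted Sobolev space treated in \cite{GO}. Density of smooth functions up to the boundary is standard for power weights of this type via mollification combined with Lipschitz-graph cutoffs.

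Next I would unravel the two cases (i) and (ii). The threshold $p = 2-2\sigma$ separates two regimes. When $p \geq 2-2\sigma$ the weight is mild relative to the $L^p$ scaling and one obtains a Sobolev-type exponent relation with the modified factor $\frac{2-2\sigma}{p(n+2-2\sigma)}$, reflecting the interaction of the weight with the gradient term through a trace/interpolation mechanism. When $p < 2-2\sigma$ a weighted Hardy inequality effectively absorbs the singularity of the weight and one recovers an essentially unweighted embedding in the effective dimension $n+2-2\sigma$, whence the condition $\frac{1}{n+2-2\sigma} > \frac{1}{p} - \frac{1}{q}$. The blanket requirement $\frac{1}{n+1} > \frac{1}{p} - \frac{1}{q}$ ensures the standard Rellich--Kondrachov embedding is available on compact subsets of $\Omega$, where the weight is bounded away from $0$ and $\infty$.

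The main work, carried out in \cite{GO}, is in upgrading continuity to compactness. A partition of unity splits $\Omega$ into an interior piece, where a classical Rellich--Kondrachov argument applies, and a boundary collar $\{0<d(x)<\epsilon\}$ where the strict inequalities in (i) and (ii) provide uniform tightness of the weighted $L^q$ norm, preventing concentration near $\partial\Omega$ along a weakly convergent sequence. Verifying this tightness in the weighted setting is the step I expect to be the main obstacle, but it is precisely what Theorems 8.8 and 8.12 of \cite{GO} handle, so I would simply invoke them to close the argument.
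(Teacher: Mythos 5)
Your proposal takes essentially the same route as the paper: the paper's proof of this proposition is a one-line citation of Theorems 8.8 and 8.12 of Gurka--Opic \cite{GO}, and your plan is to invoke precisely those two theorems after verifying that the power weight $d^{1-2\sigma}$ (with $1-2\sigma\in(-1,1)$) is admissible on a bounded Lipschitz domain. The extra heuristics you give (the threshold $p=2-2\sigma$, the effective dimension $n+2-2\sigma$, the partition-of-unity plus boundary-collar tightness argument for compactness) are consistent with the structure of the cited results and add useful context, but they do not change the argument.
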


\begin{cor}\label{SSE on manifold}
For $n\geq 2$,  let $(M,g)$ be an $n+1$ dimensional, compact, smooth Riemannian manifold with smooth boundary $\pa M$.
Let $\sigma\in (0,1)$, and $\rho$ be a defining function of $M$ with $|\nabla_g \rho|=1$ on $\pa M$. Let $1\leq p\leq q<\infty$ with $\frac{1}{n+1}>\frac{1}{p}-\frac{1}{q}$.

(i) Suppose that $2-2\sigma\le p$. Then $W^{1,p}(\rho^{1-2\sigma},M)$ is compactly embedded in $L^q(d^{1-2\sigma},M)$ if
\[
\frac{2-2\sigma}{p(n+2-2\sigma)}>\frac{1}{p}-\frac 1q.
\]

(ii) Suppose that $2-2\sigma> p$. Then $W^{1,p}(d^{1-2\sigma}, M)$ is compactly embedded in $L^q(d^{1-2\sigma}, M)$ if and only if
\[
\frac{1}{n+2-2\sigma}>\frac{1}{p}-\frac 1q.
\]
\end{cor}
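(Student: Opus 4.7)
The plan is a standard localization argument reducing Corollary \ref{SSE on manifold} to Proposition \ref{sharp sobolev embedding}. Because $\rho\in C^\infty(\overline M)$ is positive in $M$, vanishes on $\partial M$, and $|\nabla_g\rho|=1$ on $\partial M$, a Taylor expansion along geodesics issuing normally from $\partial M$ gives $\rho(x)=d(x)+O(d(x)^2)$ in some tubular neighborhood $U$ of $\partial M$, so that $C^{-1}d(x)\le\rho(x)\le C d(x)$ on $U$. On the complement $M\setminus U$, both $\rho$ and $d$ are bounded above and below by positive constants. Hence $\rho^{1-2\sigma}$ and $d^{1-2\sigma}$ are pointwise comparable on $M$, and the spaces $W^{1,p}(\rho^{1-2\sigma},M)$, $L^q(\rho^{1-2\sigma},M)$ coincide with $W^{1,p}(d^{1-2\sigma},M)$, $L^q(d^{1-2\sigma},M)$ as sets, with equivalent norms.

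Next, I would fix a finite cover $\{\Omega_k\}_{k=1}^N$ of $\overline M$ by coordinate charts and a smooth partition of unity $\{\eta_k\}$ subordinate to it. For each interior chart $\Omega_k$ with $\overline{\Omega_k}\cap\partial M=\emptyset$, the weight $\rho^{1-2\sigma}$ is bounded above and below by positive constants on $\Omega_k$, so the weighted spaces reduce to the unweighted ones, and the classical Rellich--Kondrachov theorem yields the compact embedding $W^{1,p}(\Omega_k)\hookrightarrow L^q(\Omega_k)$ under $\frac{1}{n+1}>\frac{1}{p}-\frac{1}{q}$. For each boundary chart I would take Fermi coordinates $\varphi_k:\Omega_k\to\mathcal{B}^+_{\delta}(0)$ with $\varphi_k(\Omega_k\cap\partial M)=B_{\delta}(0)$; in these coordinates the pullback metric is uniformly comparable to the Euclidean one, and the pullback of $d$ to $\mathcal{B}^+_{\delta}$ is comparable to $x_{n+1}$. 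Combined with the first step, the weighted norms on $\Omega_k$ transfer to equivalent weighted norms on $\mathcal{B}^+_{\delta}$ with weight $x_{n+1}^{1-2\sigma}$, so Proposition \ref{sharp sobolev embedding} applies and gives the compact embedding $W^{1,p}(x_{n+1}^{1-2\sigma},\mathcal{B}^+_{\delta})\hookrightarrow L^q(x_{n+1}^{1-2\sigma},\mathcal{B}^+_{\delta})$ under the hypothesis of case (i) or (ii) respectively.

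Given these local compactness statements, the global one follows routinely: for any bounded sequence $\{u_m\}\subset W^{1,p}(\rho^{1-2\sigma},M)$, the products $\eta_k u_m$ are uniformly bounded in the corresponding local weighted Sobolev space after pullback, so a diagonal extraction produces a subsequence along which each $\eta_k u_m$ converges in the local weighted $L^q$; summing gives convergence of $u_m$ in $L^q(\rho^{1-2\sigma},M)$. For the necessity in case (ii), if $\frac{1}{n+2-2\sigma}\le\frac{1}{p}-\frac{1}{q}$, Proposition \ref{sharp sobolev embedding}(ii) provides a sequence in $C_c^\infty(\mathcal{B}^+_{\delta/2})$ bounded in $W^{1,p}(x_{n+1}^{1-2\sigma},\mathcal{B}^+_{\delta})$ with no $L^q(x_{n+1}^{1-2\sigma})$-convergent subsequence; pushing it forward via $\varphi_k^{-1}$ and extending by zero gives a bounded sequence in $W^{1,p}(\rho^{1-2\sigma},M)$ with no convergent $L^q(\rho^{1-2\sigma},M)$-subsequence, so the manifold embedding cannot be compact either. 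The only technical point worth checking carefully is the equivalence $\rho\sim d\sim x_{n+1}$ throughout the Fermi chart, which is precisely where the normalization $|\nabla_g\rho|=1$ on $\partial M$ is used.
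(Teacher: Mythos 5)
Your proposal is correct and follows essentially the same route as the paper, whose proof is the one-line statement ``follows from Proposition \ref{sharp sobolev embedding} and partition of unity''; your argument fills in the standard details of that sentence. One small caveat worth adding: when you invoke Proposition \ref{sharp sobolev embedding} on a boundary half-ball $\mathcal{B}^+_\delta$, that Proposition weights by $\mathrm{dist}(x,\partial\mathcal{B}^+_\delta)$, which coincides with $x_{n+1}$ only away from the curved part $\partial''\mathcal{B}^+_\delta$; this is harmless provided the cutoffs $\eta_k$ are chosen with support in $\overline{\mathcal{B}^+_{\delta/2}}\cup B_{\delta/2}$ (there $\mathrm{dist}(x,\partial\mathcal{B}^+_\delta)=x_{n+1}$), and this, rather than the comparability $\rho\sim d\sim x_{n+1}$ you flag at the end --- which is immediate from the definition of Fermi coordinates together with $|\nabla_g\rho|=1$ --- is the one genuinely delicate point in the localization.
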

\begin{proof}
It follows from Proposition \ref{sharp sobolev embedding} and partition of unity.
\end{proof}

\begin{prop}\label{poincare inequality}
For $n\geq 2$,  let $(M,g)$ be an $n+1$ dimensional, compact, smooth Riemannian manifold with smooth boundary $\pa M$.
Let $\sigma\in (0,1)$, $\rho$ be a defining function of $M$ with $|\nabla_g \rho|=1$ on $\pa M$, and $(u)_{M,\rho}=\int_M\rho^{1-2\sigma}u\,\ud V_g/\int_M\rho^{1-2\sigma}\ud V_g$. Let $1<p<\infty$. Then there exists a constant $C$, depending only on $M,g,p,n,\sigma$ and $\rho$, such that
\be\label{eq:poincare}
\|u-(u)_{M,\rho}\|_{L^p(\rho^{1-2\sigma}, M)}\leq C \|\nabla_g u\|_{L^p(\rho^{1-2\sigma}, M)}
\ee
for every function $u\in W^{1,p}(\rho^{1-2\sigma},M)$.
\end{prop}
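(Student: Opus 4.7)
The plan is to prove \eqref{eq:poincare} by a standard compactness/contradiction argument, using the compact embedding provided by Corollary \ref{SSE on manifold}. Suppose the inequality fails. Then there exists a sequence $\{u_k\} \subset W^{1,p}(\rho^{1-2\sigma}, M)$ which, after subtracting its weighted mean and normalizing, satisfies
\[
(u_k)_{M,\rho}=0, \qquad \|u_k\|_{L^p(\rho^{1-2\sigma},M)}=1, \qquad \|\nabla_g u_k\|_{L^p(\rho^{1-2\sigma},M)}\to 0.
\]
In particular, $\{u_k\}$ is bounded in $W^{1,p}(\rho^{1-2\sigma},M)$.

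Next I apply Corollary \ref{SSE on manifold} with $q=p$. Both of the trivial gap conditions $\frac{1}{n+1}>0$ and $\frac{1}{p}-\frac{1}{q}=0$ are satisfied, and in each of cases (i) and (ii) the strict inequality characterizing compactness reduces to requiring that a strictly positive quantity (namely $\tfrac{2-2\sigma}{p(n+2-2\sigma)}$ in case (i), resp.\ $\tfrac{1}{n+2-2\sigma}$ in case (ii)) exceed $0$. Hence $W^{1,p}(\rho^{1-2\sigma},M)\hookrightarrow L^p(\rho^{1-2\sigma},M)$ compactly, so after passing to a subsequence $u_k\to u$ strongly in $L^p(\rho^{1-2\sigma},M)$ for some $u$. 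Continuity of the weighted-mean and norm functionals on $L^p(\rho^{1-2\sigma},M)$ yields $(u)_{M,\rho}=0$ and $\|u\|_{L^p(\rho^{1-2\sigma},M)}=1$. Since $\nabla_g u_k\to 0$ in $L^p(\rho^{1-2\sigma},M)$ and $u_k\to u$ in $L^p(\rho^{1-2\sigma},M)$, the sequence is Cauchy in $W^{1,p}(\rho^{1-2\sigma},M)$, so $u\in W^{1,p}(\rho^{1-2\sigma},M)$ and $\nabla_g u=0$ a.e.\ in $M$.

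Finally, since $\rho$ is smooth and strictly positive on the connected open set $M\setminus\pa M$, the weight $\rho^{1-2\sigma}$ is bounded and bounded away from zero on every compact subset of $M\setminus \pa M$. Thus $u\in W^{1,p}_{\mathrm{loc}}(M\setminus\pa M)$ in the usual unweighted sense, and $\nabla_g u=0$ a.e.\ combined with connectedness of $M\setminus\pa M$ forces $u$ to be a.e.\ constant on $M$ (the boundary carries no $\rho^{1-2\sigma}\,\ud v_g$-mass). The condition $(u)_{M,\rho}=0$ then gives $u\equiv 0$, contradicting $\|u\|_{L^p(\rho^{1-2\sigma},M)}=1$.

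The only nontrivial point is the compact embedding step, and it is supplied by Corollary \ref{SSE on manifold}; the rest is a routine application of the Rellich--type contradiction scheme adapted to the Muckenhoupt weight $\rho^{1-2\sigma}$, $\sigma\in(0,1)$.
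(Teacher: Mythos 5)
Your proof is correct and follows essentially the same route as the paper's: a normalization-and-compactness contradiction argument built on Corollary \ref{SSE on manifold} applied with $q=p$. The only difference is cosmetic — you argue strong $W^{1,p}$-convergence via the Cauchy property and then spell out explicitly why $\nabla_g u = 0$ with zero weighted mean forces $u\equiv 0$ on the connected manifold, whereas the paper uses weak lower semicontinuity of $\|\nabla_g(\cdot)\|_{L^p}$ and leaves the final "constant $\Rightarrow$ zero" step implicit.
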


\begin{proof}
We argue by contradiction. Were the stated estimate false, there would exist for each integer $k=1,2,\cdots$ a function 
$u_k\in W^{1,p}(\rho^{1-2\sigma},M)$ satisfying
\[
\|u_k-(u_k)_{M,\rho}\|_{L^p(\rho^{1-2\sigma}, M)}>k \|\nabla_g u_k\|_{L^p(\rho^{1-2\sigma}, M)}.
\]
For each $k$, define
\[
v_k:=\frac{u-(u)_{M,\rho}}{\|u-(u)_{M,\rho}\|_{L^p(\rho^{1-2\sigma}, M)}}.
\]
Then
\[
(v_k)_{M,\rho}=0,\quad \|v_k\|_{L^p(\rho^{1-2\sigma}, M)}=1,\quad \|\nabla_g v_k\|_{L^p(\rho^{1-2\sigma}, M)}<1/k.
\]
By Corollary \ref{SSE on manifold}, there exists a subsequence of $\{v_k\}$, which is still denoted as $\{v_k\}$, and a function
$v\in L^p(\rho^{1-2\sigma},M)$ such that
\[
v_k\to v \ \mbox{ in }\  L^p(\rho^{1-2\sigma},M), \quad v_k\rightharpoonup v \ \mbox{ in }\  W^{1,p}(\rho^{1-2\sigma},M).
\]
Consequently, 
\[
(v)_{M,\rho}=0,\quad \|v\|_{L^p(\rho^{1-2\sigma}, M)}=1, \quad \|\nabla_g v\|_{L^p(\rho^{1-2\sigma}, M)}\le\liminf_{k\to\infty}\|\nabla_g v_k\|_{L^p(\rho^{1-2\sigma}, M)}=0.
\]
We reach a contradiction.
\end{proof}

\begin{cor}\label{poincare inequality2}
For $n\geq 2$,  let $(M,g)$ be an $n+1$ dimensional, compact, smooth Riemannian manifold with smooth boundary $\pa M$.
Let $\sigma\in (0,1)$, $\rho$ be a defining function of $M$ with $|\nabla_g \rho|=1$ on $\pa M$, and $(u)_{M,\rho}=\int_M\rho^{1-2\sigma}u\,\ud V_g/\int_M\rho^{1-2\sigma}\ud V_g$. Let $1<p<\infty$. Then there exists a constant $\delta_0$ depending only on $n, \sigma, p$ such that for any $1\le k\le 1+\delta_0$, 
\be\label{eq:poincare2}
\|u-(u)_{M,\rho}\|_{L^{kp}(\rho^{1-2\sigma}, M)}\leq C \|\nabla_g u\|_{L^p(\rho^{1-2\sigma}, M)}
\ee
for every function $u\in W^{1,p}(\rho^{1-2\sigma},M)$, where $C$ is a positive constant depending only on $M,g,p,n,\sigma$ and $\rho$,
\end{cor}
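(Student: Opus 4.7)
The plan is to bootstrap the standard Poincar\'e inequality of Proposition \ref{poincare inequality} using the compact Sobolev embedding of Corollary \ref{SSE on manifold}. The key observation is that as $k \to 1^+$, the gap $\frac{1}{p} - \frac{1}{kp} = \frac{k-1}{kp}$ tends to zero, so the Sobolev embedding $W^{1,p}(\rho^{1-2\sigma}, M) \hookrightarrow L^{kp}(\rho^{1-2\sigma}, M)$ holds for all $k$ sufficiently close to $1$.

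First, I would fix $\delta_0 = \delta_0(n, \sigma, p) > 0$ small enough that for every $k \in [1, 1+\delta_0]$, the pair $(p, kp)$ satisfies the hypotheses of Corollary \ref{SSE on manifold}. Concretely, $\frac{k-1}{kp} < \frac{1}{n+1}$ holds for $\delta_0$ small, and the relevant condition
\[
\frac{2-2\sigma}{p(n+2-2\sigma)} > \frac{k-1}{kp} \qquad \text{or} \qquad \frac{1}{n+2-2\sigma} > \frac{k-1}{kp}
\]
(depending on whether $p \ge 2-2\sigma$ or $p < 2-2\sigma$) likewise holds for all $k \in [1, 1+\delta_0]$ provided $\delta_0$ is small enough. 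Corollary \ref{SSE on manifold} then yields a continuous embedding
\[
\|v\|_{L^{kp}(\rho^{1-2\sigma}, M)} \le C \bigl( \|v\|_{L^p(\rho^{1-2\sigma}, M)} + \|\nabla_g v\|_{L^p(\rho^{1-2\sigma}, M)} \bigr)
\]
for every $v \in W^{1,p}(\rho^{1-2\sigma}, M)$, with $C$ depending only on $M, g, n, \sigma, \rho, p$ (and on $\delta_0$, hence on $n,\sigma,p$).

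Finally, I apply this embedding to $v := u - (u)_{M,\rho}$, which lies in $W^{1,p}(\rho^{1-2\sigma}, M)$ and satisfies $\nabla_g v = \nabla_g u$. Combining with Proposition \ref{poincare inequality} applied to $u$, which gives $\|v\|_{L^p(\rho^{1-2\sigma}, M)} \le C \|\nabla_g u\|_{L^p(\rho^{1-2\sigma}, M)}$, yields
\[
\|u - (u)_{M,\rho}\|_{L^{kp}(\rho^{1-2\sigma}, M)} \le C \|\nabla_g u\|_{L^p(\rho^{1-2\sigma}, M)},
\]
which is the desired inequality. There is no real obstacle here; the only care needed is the elementary bookkeeping in choosing $\delta_0$ so that both alternatives of Corollary \ref{SSE on manifold} are available uniformly for $k \in [1, 1+\delta_0]$.
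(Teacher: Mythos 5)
Your proposal is correct and takes essentially the same route as the paper: apply the Sobolev embedding of Corollary \ref{SSE on manifold} to $u-(u)_{M,\rho}$ to get $\|u-(u)_{M,\rho}\|_{L^{kp}}\le C(\|\nabla_g u\|_{L^p}+\|u-(u)_{M,\rho}\|_{L^p})$ for $k$ close to $1$, then absorb the second term via Proposition \ref{poincare inequality}. The bookkeeping you describe for choosing $\delta_0$ is exactly what the paper implicitly does.
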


\begin{proof}
By Corollary \ref{SSE on manifold}, there exists a constant $\delta_0$ depending only on $n, \sigma, p$ such that for any $1\le k\le 1+\delta_0$, 
\[
\begin{split}
\|u-(u)_{M,\rho}\|_{L^{kp}(\rho^{1-2\sigma}, M)}&\leq C \|\nabla_g u\|_{L^p(\rho^{1-2\sigma}, M)}+ C \| u-(u)_{M,\rho}\|_{L^p(\rho^{1-2\sigma}, M)}\\
& \leq C \|\nabla_g u\|_{L^p(\rho^{1-2\sigma}, M)}
\end{split}
\]
where in the last inequality we have used Proposition \ref{poincare inequality}.
\end{proof}

Let $(M,g)$, $\rho$ be as in Theorem \ref{thm: main thm A}. For $\sigma\in (0,1)$, we consider 
\be\label{nb1}
\begin{cases}
& \mathrm{div}_g(\rho^{1-2\sigma} \nabla_g u)=0,\quad \mbox{in } M\\
&\lim_{y\to x\in \pa M}\rho(y)^{1-2\sigma}\frac{\pa_g u}{\pa \nu}=f(x) \quad \mbox{on }\pa M.
\end{cases}
\ee
We say $u\in W^{1,1}(\rho^{1-2\sigma}, M)$ is a weak solution of \eqref{nb1} if
\be\label{eq:dwsm}
\int_{M}\rho^{1-2\sigma}\langle\nabla_g u,\nabla_g \varphi \rangle\,\ud v_g=\int_{\pa' M}f\varphi \,\ud s_g
\ee
for all $\varphi\in C^{\infty}(\overline M)$.
Define $\tilde{H}^1:=\{u\in H^1(\rho^{1-2\sigma}, M):\int_{M}\rho^{1-2\sigma}u\,\ud v_g=0\}$. 
\begin{lem}\label{lem:nb1}
 Let $f\in H^{-\sigma}(\pa M):=(H^{\sigma}(\pa M))^*$, the dual of $H^{-\sigma}(\pa M)$, such that 
$\langle f,1\rangle=0$. Then \eqref{nb1} admits a unique weak solution $u\in \tilde{H}^1$.
\end{lem}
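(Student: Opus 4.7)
The plan is to apply the Lax--Milgram theorem on the closed subspace $\tilde H^1$ of $H^1(\rho^{1-2\sigma},M)$. Consider the symmetric bilinear form
\[
a(u,v):=\int_{M}\rho^{1-2\sigma}\langle\nabla_g u,\nabla_g v\rangle_g\,\ud v_g,\qquad u,v\in \tilde H^1,
\]
and the linear functional $L(v):=\langle f,T v\rangle_{H^{-\sigma}(\pa M),H^{\sigma}(\pa M)}$, where $T\colon H^1(\rho^{1-2\sigma},M)\to H^\sigma(\pa M)$ is the continuous trace operator mentioned in the introduction. Continuity of $a$ and of $L$ on $\tilde H^1$ are immediate from Cauchy--Schwarz and the continuity of $T$, respectively.

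The coercivity is where the zero-mean constraint enters. For $v\in \tilde H^1$ one has $(v)_{M,\rho}=0$, so Proposition \ref{poincare inequality} with $p=2$ gives
\[
\|v\|_{L^2(\rho^{1-2\sigma},M)}=\|v-(v)_{M,\rho}\|_{L^2(\rho^{1-2\sigma},M)}\le C\|\nabla_g v\|_{L^2(\rho^{1-2\sigma},M)}.
\]
Hence $a(v,v)\ge c\|v\|_{H^1(\rho^{1-2\sigma},M)}^2$ on $\tilde H^1$. By Lax--Milgram, there is a unique $u\in\tilde H^1$ such that $a(u,v)=L(v)$ for every $v\in\tilde H^1$.

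Next I would upgrade this to the weak formulation \eqref{eq:dwsm} against arbitrary $\varphi\in C^\infty(\overline M)$. Given such a $\varphi$, set $c:=(\varphi)_{M,\rho}$ and $\tilde\varphi:=\varphi-c\in\tilde H^1$. Since $\nabla_g\varphi=\nabla_g\tilde\varphi$ and $T\varphi=T\tilde\varphi+c$, the hypothesis $\langle f,1\rangle=0$ yields
\[
\int_M\rho^{1-2\sigma}\langle\nabla_g u,\nabla_g\varphi\rangle_g\,\ud v_g=a(u,\tilde\varphi)=L(\tilde\varphi)=\langle f,T\varphi\rangle-c\langle f,1\rangle=\int_{\pa M}f\,\varphi\,\ud s_g,
\]
where the last equality is the usual dual pairing extending the integral notation for $f\in H^{-\sigma}(\pa M)$. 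Thus $u$ is a weak solution of \eqref{nb1} in the sense of \eqref{eq:dwsm}.

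Uniqueness within $\tilde H^1$ is then straightforward: if $u_1,u_2\in\tilde H^1$ are two such solutions and $w:=u_1-u_2$, testing \eqref{eq:dwsm} against $w+c$ for constants $c$ (which lies in $C^\infty(\overline M)$) and using $\langle f,1\rangle=0$ gives $a(w,w)=0$, hence $\nabla_g w\equiv 0$; connectedness of $M$ forces $w$ to be constant, and the constraint $(w)_{M,\rho}=0$ yields $w\equiv 0$. The only nontrivial point in the whole argument is the coercivity, which is where Proposition \ref{poincare inequality} does the work; the rest is a bookkeeping exercise with the zero-mean normalization.
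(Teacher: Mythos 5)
Your argument is correct and is precisely the route the paper intends: it cites Proposition \ref{poincare inequality} for coercivity of the Dirichlet form on $\tilde H^1$ and then invokes Lax--Milgram, which is exactly what you carry out, together with the routine step of passing from test functions in $\tilde H^1$ to arbitrary $\varphi\in C^\infty(\overline M)$ using $\langle f,1\rangle=0$. The paper simply leaves those details implicit.
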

\begin{proof}
 The lemma follows immediately from Proposition \ref{poincare inequality} and the Lax-Milgram theorem.  
\end{proof}

\begin{lem} \label{lem:nb2}
 Let $f\in L^2(\pa M)$ with zero mean value, $u\in \tilde{H}^1$ be the weak solution of \eqref{nb1}. Then for any $\theta>1$,
\[
 \int_{M}\rho^{1-2\sigma}\frac{|\nabla_g u|^2}{(1+|u|)^\theta}\,\ud v_g\leq \frac{1}{\theta-1} \|f\|_{L^1(\pa M)}. 
\]
\end{lem}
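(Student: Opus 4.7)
The plan is to test the weak formulation \eqref{eq:dwsm} against a bounded Lipschitz function of $u$ whose derivative produces the weight $(1+|u|)^{-\theta}$. Concretely, define
\[
\Phi(t)=\mathrm{sgn}(t)\,\frac{1-(1+|t|)^{1-\theta}}{\theta-1},\qquad t\in\R.
\]
A direct computation shows $\Phi\in C^1(\R)$ with $\Phi'(t)=(1+|t|)^{-\theta}$, and $\|\Phi\|_{L^\infty(\R)}\leq (\theta-1)^{-1}$. In particular $\Phi$ is globally Lipschitz with Lipschitz constant $1$.

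Since $u\in\tilde H^1\subset H^1(\rho^{1-2\sigma},M)$ and $\rho^{1-2\sigma}$ is an $A_2$-weight, the chain rule for Lipschitz composition gives that $\Phi(u)\in H^1(\rho^{1-2\sigma},M)$ with $\nabla_g\Phi(u)=\Phi'(u)\nabla_g u$ a.e.\ in $M$, and the trace of $\Phi(u)$ on $\pa M$ equals $\Phi(u|_{\pa M})$. Next I would extend the weak formulation \eqref{eq:dwsm} from $\varphi\in C^\infty(\overline M)$ to $\varphi\in H^1(\rho^{1-2\sigma},M)$ by density, using that $C^\infty(\overline M)$ is dense in $H^1(\rho^{1-2\sigma},M)$ by definition, the continuity of the trace into $H^\sigma(\pa M)\hookrightarrow L^2(\pa M)$, and the assumption $f\in L^2(\pa M)$ which permits pairing with $L^2$ traces.

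Plugging $\varphi=\Phi(u)$ into the extended identity and using $\nabla_g\Phi(u)=\Phi'(u)\nabla_g u$ yields
\[
\int_M\rho^{1-2\sigma}\frac{|\nabla_g u|^2}{(1+|u|)^\theta}\,\ud v_g=\int_{\pa M}f\,\Phi(u)\,\ud s_g.
\]
The right-hand side is bounded above by $\|\Phi(u)\|_{L^\infty(\pa M)}\|f\|_{L^1(\pa M)}\leq(\theta-1)^{-1}\|f\|_{L^1(\pa M)}$, which yields the claim.

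The only nontrivial point is justifying that $\Phi(u)$ is an admissible test function: one needs (i) the chain rule for Lipschitz composition in the weighted Sobolev space $H^1(\rho^{1-2\sigma},M)$, which is standard for $A_2$-weights, and (ii) the extension of \eqref{eq:dwsm} to $H^1$ test functions, which is an immediate density argument. Once these are in hand, the estimate is a one-line consequence of the pointwise bound on $\Phi$.
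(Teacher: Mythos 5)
Your proof is correct and follows essentially the same approach as the paper: the paper tests the weak formulation against $\varphi_\theta:=\phi_\theta(u)$ where $\phi_\theta(r)=\int_0^r(1+t)^{-\theta}\,\ud t$ for $r\geq 0$ (extended oddly), which is precisely your $\Phi$. You simply spell out the density and Lipschitz chain-rule justifications that the paper summarizes with ``it is easy to see.''
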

\begin{proof}
 In our proofs of this and the next lemma, we adapt some arguments from \cite{BG} and \cite{GS}. For $\theta>0$, let 
$\phi_\theta(r)= \int_{0}^r\frac{\ud t}{(1+t)^\theta}$if $r\geq 0$ and $\phi_\theta(r)=-\phi_\theta(-r)$ if $r<0$. It is easy to see that $\varphi_{\theta}:=\phi_{\theta}(u)\in H^1(\rho^{1-2\sigma}, M)$ and 
$|\varphi_\theta|\leq 1/(\theta-1)$ on $\overline M$ if $\theta>1$. Hence, the Lemma follows from multiplying \eqref{eq:dwsm} by letting $\varphi=\varphi_\theta$.  
\end{proof}

\begin{lem}\label{lem:nb3}
  Let $f\in L^2(\pa M)$ with zero mean value, $u\in \tilde{H}^1$ be the weak solution of \eqref{nb1}.  Then there exists $\va_0>0$ depending only on $n$ and $\sigma$ such that for any $1\leq \tau\leq 1+\va_0$, we have
\[
 \|u\|_{W^{1,\tau}(\rho^{1-2\sigma}, M)}\leq C,
\]
where $C>0$ depends only on $M,g, \sigma, \rho, \|f\|_{L^1(\pa M)}$.  
\end{lem}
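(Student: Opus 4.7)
The plan is to adapt the Boccardo--Gallouet ``inverse power'' test-function technique to the weighted setting. Lemma \ref{lem:nb2} already provides the key weighted a priori estimate
\[
\int_M \rho^{1-2\sigma}\frac{|\nabla_g u|^2}{(1+|u|)^\theta}\,\ud v_g \leq \frac{\|f\|_{L^1(\pa M)}}{\theta-1}, \quad \theta>1,
\]
and the hypothesis $u\in\tilde H^1$ gives $(u)_{M,\rho}=0$, so the weighted Poincar\'e inequality of Corollary \ref{poincare inequality2} applies without any mean-value correction.

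First, for $\tau\in(1,2)$ I would split $\int_M \rho^{1-2\sigma}|\nabla_g u|^\tau\,\ud v_g$ via H\"older's inequality with conjugate exponents $2/\tau$ and $2/(2-\tau)$ against the measure $\rho^{1-2\sigma}\,\ud v_g$: writing $|\nabla_g u|^\tau = [|\nabla_g u|^\tau(1+|u|)^{-\theta\tau/2}]\cdot (1+|u|)^{\theta\tau/2}$, this gives
\[
\int_M \rho^{1-2\sigma}|\nabla_g u|^\tau\,\ud v_g \leq \left(\int_M \rho^{1-2\sigma}\frac{|\nabla_g u|^2}{(1+|u|)^\theta}\,\ud v_g\right)^{\tau/2}\!\left(\int_M \rho^{1-2\sigma}(1+|u|)^{\theta\tau/(2-\tau)}\,\ud v_g\right)^{(2-\tau)/2}.
\]
Lemma \ref{lem:nb2} controls the first factor by a multiple of $\|f\|_{L^1(\pa M)}^{\tau/2}$. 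For the second factor, set $p := \theta\tau/(2-\tau)$; using $(1+|u|)^p\le 2^{p-1}(1+|u|^p)$ together with Corollary \ref{poincare inequality2} at exponent $\tau$ (allowed since $(u)_{M,\rho}=0$) yields
\[
\left(\int_M \rho^{1-2\sigma}(1+|u|)^p\,\ud v_g\right)^{1/p}\leq C\bigl(1+\|\nabla_g u\|_{L^\tau(\rho^{1-2\sigma},M)}\bigr),
\]
provided $p\le (1+\delta_0)\tau$, where $\delta_0=\delta_0(n,\sigma,\tau)>0$ is the constant furnished by Corollary \ref{poincare inequality2}.

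Now I would fix $\theta\in(1,2)$ and $\va_0>0$ so small that for every $\tau\in(1,1+\va_0]$ the two constraints $\theta<2$ and $\theta\le (1+\delta_0(n,\sigma,\tau))(2-\tau)$ hold simultaneously; both can be met because $\delta_0$ is bounded below uniformly for $\tau$ near $1$. Feeding the two estimates back into the H\"older split gives
\[
\|\nabla_g u\|_{L^\tau(\rho^{1-2\sigma},M)}^\tau \leq C\|f\|_{L^1(\pa M)}^{\tau/2}\bigl(1+\|\nabla_g u\|_{L^\tau(\rho^{1-2\sigma},M)}\bigr)^{\theta\tau/2}.
\]
Since $\theta\tau/2<\tau$, Young's inequality absorbs the right-hand side into the left and produces the desired uniform bound $\|\nabla_g u\|_{L^\tau(\rho^{1-2\sigma},M)}\le C$. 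One more application of Corollary \ref{poincare inequality2} supplies the matching $L^\tau$ bound on $u$ itself; the case $\tau=1$ follows by weighted H\"older against the fixed weighted measure $\rho^{1-2\sigma}\,\ud v_g$, since $\rho^{1-2\sigma}$ is $A_2$ and $M$ is compact.

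The main obstacle is the joint calibration of the two perturbation parameters $\theta-1$ and $\tau-1$: they both must be strictly positive, $\theta$ must remain below $2$ to permit the absorption step, and they must additionally satisfy $\theta\le(1+\delta_0(n,\sigma,\tau))(2-\tau)$ so that the Poincar\'e step closes the loop. Everything else is a direct weighted analogue of the classical $L^1$-data linear theory, and no new regularity input beyond Lemma \ref{lem:nb2} and Corollary \ref{poincare inequality2} is required.
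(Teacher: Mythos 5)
Your argument is correct and stays within the same Boccardo--Gallou\"et framework as the paper (same H\"older split $|\nabla_g u|^\tau=[|\nabla_g u|^\tau(1+|u|)^{-\theta\tau/2}]\cdot(1+|u|)^{\theta\tau/2}$, same use of Lemma \ref{lem:nb2} for the first factor), but the closing step is genuinely different. The paper applies the weighted Poincar\'e--Sobolev inequality of Corollary \ref{poincare inequality2} at exponent $p=2$ to the auxiliary function $\varphi_{\theta/2}=\phi_{\theta/2}(u)$, whose gradient $\nabla_g u/(1+|u|)^{\theta/2}$ is exactly what Lemma \ref{lem:nb2} controls in $L^2(\rho^{1-2\sigma},M)$; after observing $\phi_{\theta/2}(r)\approx |r|^{1-\theta/2}$ for large $|r|$, this converts the second H\"older factor into a power of $\int_M\rho^{1-2\sigma}|u|\,\ud v_g$, and a one-dimensional absorption first produces the absolute bound $\|u\|_{L^1(\rho^{1-2\sigma},M)}\le C$ and then the gradient bound. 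You instead apply Corollary \ref{poincare inequality2} directly to $u$ at the same exponent $\tau$ that appears on the left, which yields the self-referential estimate $\|\nabla_g u\|_{L^\tau(\rho^{1-2\sigma},M)}^\tau\le C\bigl(1+\|\nabla_g u\|_{L^\tau(\rho^{1-2\sigma},M)}\bigr)^{\theta\tau/2}$ and then absorb via Young since $\theta<2$. Both routes are sound; yours is a bit shorter, while the paper's detour through $\varphi_{\theta/2}$ is the more classical shape and yields the intermediate $L^1$ bound as a byproduct. Two small points worth making explicit in your version: (i) the absorption step presupposes that $\|\nabla_g u\|_{L^\tau(\rho^{1-2\sigma},M)}$ is finite, which does hold here because $u\in\tilde H^1\subset W^{1,2}(\rho^{1-2\sigma},M)\subset W^{1,\tau}(\rho^{1-2\sigma},M)$ on the compact $M$ when $\tau\le 2$; and (ii) the constant $\delta_0$ in Corollary \ref{poincare inequality2} depends on the exponent, so the assertion that it stays bounded below for $\tau$ near $1$ should be tied to the explicit embedding conditions in Corollary \ref{SSE on manifold}, which are continuous in $p$.
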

\begin{proof}
By the H\"older inequality, 
\be\label{nb2}
 \begin{split}
  &\int_{M}\rho^{1-2\sigma}|\nabla_g u|^\tau\,\ud v_g\\&
\leq  \left( \int_{M}\rho^{1-2\sigma}\frac{|\nabla_g u|^2}{(1+|u|)^\theta}\,\ud v_g\right)^{\tau/2}\left(\int_{M}\rho^{1-2\sigma}(1+|u|)^{\frac{\tau \theta}{2-\tau}}\ud v_g\right)^{(2-\tau)/2}\\
&\leq C(\theta)\left(\int_{M}\rho^{1-2\sigma}(1+|u|)^{\frac{\tau \theta}{2-\tau}}\ud v_g\right)^{(2-\tau)/2},
 \end{split}
\ee
where we used Lemma \ref{lem:nb2} in the last inequality and $\theta\in (1,2)$ will be chosen later. Applying Corollary \ref{poincare inequality2} (see also \cite{FKS1}) to  $\varphi_{\theta/2}$ yields that for any $1\leq k\leq 1+\delta_0$
\be\label{nb2-1}
\left(\int_{M}\rho^{1-2\sigma}\big|\varphi_{\theta/2} -\dashint_{M}\rho^{1-2\sigma}\varphi_{\theta/2}\,\ud v_g\big|^{2k}\ud v_g\right)^{1/k}\leq C\int_{M}\rho^{1-2\sigma}\frac{|\nabla_g u|^2}{(1+|u|)^\theta}\,\ud v_g,
\ee
where $\delta_0>0$ depends only on $n, \sigma$, and $C$ depends only on $M,g, \sigma, \rho,$ $k$. 
Since $\phi_{\theta/2}(r) \approx |r|^{1-\frac{\theta}{2}}$ for $|r|$ large, it follows from \eqref{nb2-1} and Lemma \ref{lem:nb2} that
\be\label{nb2-2}
 \left(\int_{M}\rho^{1-2\sigma}|u|^{k(2-\theta)}\right)^{1/2k}\,\ud v_g \leq C+ C \int_{M}\rho^{1-2\sigma}|u|^{1-\frac{\theta}{2}}\,\ud v_g.
\ee
Choosing $\theta$ close to $1$ such that $k(2-\theta)=\frac{\tau\theta}{2-\tau}$ (this can be achieved as long as $\tau$ is closed to $1$) and inserting \eqref{nb2-2} to \eqref{nb2}, we obtain 
\be \label{nb3}
 \begin{split}
  \left(\int_{M}\rho^{1-2\sigma}|\nabla_g u|^\tau\,\ud v_g\right)^{1/\tau}&
\leq C\left(1+ \int_{M}\rho^{1-2\sigma}|u|^{1-\frac{\theta}{2}}\,\ud v_g\right)^{\frac{\theta}{2-\theta}}\\&
\leq C+ C\left(\int_{M}\rho^{1-2\sigma}|u|\,\ud v_g\right)^{\frac{\theta}{2}}
 \end{split}
\ee
Since $\int_{M}\rho^{1-2\sigma}u\,\ud v_g=0$, by the Poincar\'e-Sobolev inequality, H\"older inequality and \eqref{nb3}, we have 
\[
 \|u\|_{L^1(\rho^{1-2\sigma}, M)}\leq C \int_{M}\rho^{1-2\sigma}|\nabla_g u|\,\ud v_g\leq C(1+\|u\|^{\frac{\theta}{2}}_{L^1(\rho^{1-2\sigma}, M)}).
\]
Thus, $\|u\|_{L^1(\rho^{1-2\sigma}, M)}\leq C$ because $\frac{\theta}{2}<1$. Therefore, the lemma follows immediately from \eqref{nb3} and the Poincar\'e-Sobolev inequality. 

\end{proof}

\begin{thm}\label{thm:nb1}
For any bounded radon measure $f$ defined on $\pa M$ with $\langle f, 1\rangle =0$, 
there exists a weak solution  $u\in W^{1,1+\va_0}(\rho^{1-2\sigma},M)$ of \eqref{nb1}. 
\end{thm}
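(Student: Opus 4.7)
The plan is to approximate the measure $f$ by smooth zero-mean functions, solve the resulting problems via Lemma \ref{lem:nb1}, and extract a weak limit using the uniform bound of Lemma \ref{lem:nb3}. First, I would regularize $f$ on $\pa M$ to produce a sequence $f_k\in C^\infty(\pa M)$ with $\int_{\pa M} f_k\,\ud s_g=0$, $f_k\rightharpoonup f$ weakly-$*$ as Radon measures on $\pa M$, and the uniform bound $\|f_k\|_{L^1(\pa M)}\leq |f|(\pa M)$. A convenient construction is $f_k:=e^{-t_k\Delta_{\pa M}}f$ with $t_k\to 0^+$, where $\Delta_{\pa M}$ is the Laplace--Beltrami operator of $(\pa M, g|_{\pa M})$; the heat semigroup preserves total integrals (hence the zero-mean condition), does not increase the $L^1$ total variation, and converges to the identity in the weak-$*$ sense.

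Each $f_k\in L^2(\pa M)$ has zero mean, so Lemma \ref{lem:nb1} produces a unique $u_k\in\tilde{H}^1$ solving \eqref{nb1} with datum $f_k$. Lemma \ref{lem:nb3} then yields the uniform estimate
\[
 \|u_k\|_{W^{1,1+\va_0}(\rho^{1-2\sigma}, M)}\leq C,
\]
with $C$ depending only on $M,g,\sigma,\rho$ and on $\sup_k\|f_k\|_{L^1(\pa M)}\leq |f|(\pa M)$. Since $W^{1,1+\va_0}(\rho^{1-2\sigma},M)$ is reflexive (as $\rho^{1-2\sigma}$ is an $A_2$ weight so this weighted Sobolev space is a uniformly convex Banach space), I can pass to a subsequence with $u_k\rightharpoonup u$ weakly in this space for some $u\in W^{1,1+\va_0}(\rho^{1-2\sigma},M)$.

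It remains to pass to the limit in the weak formulation \eqref{eq:dwsm} tested against an arbitrary $\varphi\in C^\infty(\overline M)$. Since $\rho^{1-2\sigma}\in L^1(M)$ and $\nabla_g\varphi$ is bounded, each component of $\nabla_g\varphi$ lies in $L^{(1+\va_0)'}(\rho^{1-2\sigma}\,\ud v_g)$, so the weak convergence of $\nabla_g u_k$ gives
\[
 \int_{M}\rho^{1-2\sigma}\langle\nabla_g u_k,\nabla_g\varphi\rangle\,\ud v_g\longrightarrow \int_{M}\rho^{1-2\sigma}\langle\nabla_g u,\nabla_g\varphi\rangle\,\ud v_g.
\]
Since $\varphi|_{\pa M}\in C(\pa M)$, the weak-$*$ convergence $f_k\rightharpoonup f$ gives $\int_{\pa M} f_k\varphi\,\ud s_g\to\langle f,\varphi\rangle$, identifying $u$ as a weak solution of \eqref{nb1}. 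The only delicate point I anticipate is arranging the approximants to inherit exactly the zero-mean constraint without spoiling the uniform $L^1$ bound; this is automatic for the heat-semigroup regularization above, and for a partition-of-unity mollification it would require only a harmless small correction enabled by $\langle f,1\rangle=0$.
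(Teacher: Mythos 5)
Your proposal is correct and is essentially the paper's intended argument: the paper states only that Theorem \ref{thm:nb1} ``follows from Lemma \ref{lem:nb1} and \ref{lem:nb3} and some standard approximating procedure,'' and what you have written out---smooth zero-mean approximants with a uniform $L^1$ bound, existence via Lemma \ref{lem:nb1}, the uniform $W^{1,1+\va_0}$ estimate from Lemma \ref{lem:nb3}, weak compactness, and passage to the limit in \eqref{eq:dwsm}---is precisely that procedure. The heat-semigroup regularization is a clean choice that handles the mean-zero constraint automatically, and the limit passage is justified as you say since $\nabla_g\varphi\in L^\infty$ and $\rho^{1-2\sigma}\in L^1(M)$ put $\nabla_g\varphi$ in the dual Lebesgue space, while $\varphi|_{\pa M}\in C(\pa M)$ pairs with the weak-$*$ convergence of measures.
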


\begin{proof}
 The proof follows from Lemma \ref{lem:nb1} and \ref{lem:nb3} and some standard approximating procedure, see, e.g., \cite{GS}. We omit the details here. 
\end{proof}

\begin{thm}\label{thm:nb2} For $x_0\in \pa M$, let $f=\delta_{x_0}-\frac{1}{|\pa M|_g}$, where $|\pa M|_g$ is the area of $\pa M$ with respect to the induced metric $g$.
Then there exists a weak solution $u\in W^{1,1+\va_0}(\rho^{1-2\sigma},M)$ of \eqref{nb1} with mean value zero and for all $x\in \overline{M}\backslash\{x_0\}$,
\begin{subequations}
\begin{align}
 A_1 \mathrm{dist}_g(x, x_0)^{2\sigma-n}-A_0\leq u(x)&\leq A_2 \mathrm{dist}_g(x, x_0)^{2\sigma-n},\label{nb4}\\
 |\nabla_{tan} u|&\le A_3 \mathrm{dist}_g(x, x_0)^{2\sigma-n-1}, \label{nb4.1}\\
 |\frac{\pa u}{\pa \nu}|&\le A_4\rho^{2\sigma-1}\mathrm{dist}_g(x, x_0)^{-n}, \label{nb4.2}
\end{align}
\end{subequations}
where $A_0, A_1, A_2, A_3, A_4$ are positive constants depending only on $M, g, n, \sigma, \rho$.
\end{thm}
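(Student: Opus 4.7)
The existence of a weak solution $u\in W^{1,1+\va_0}(\rho^{1-2\sigma},M)$ follows directly from Theorem \ref{thm:nb1}, applied to the bounded Radon measure $f=\delta_{x_0}-\frac{1}{|\pa M|_g}$ which has total mass zero; adjusting by a constant normalizes $\int_M\rho^{1-2\sigma}u\,\ud v_g=0$. The substance is the quantitative bounds, for which the plan is to compare $u$ with the explicit flat-space fundamental solution in Fermi coordinates near $x_0$ and defer everything else to the regularity theory of the appendix.

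In Fermi coordinates $(x',x_{n+1})$ on a half-ball $\mathcal{B}^+_\delta(x_0)$, the assumption $|\nabla_g\rho|=1$ on $\pa M$ gives $\rho(x)=x_{n+1}+O(x_{n+1}^2)$, and Lemma \ref{taylor expansion of fermi} gives $g^{ij}(x)=\delta^{ij}+O(|x|)$. The Euclidean model
\[
\Phi(x) := c_{n,\sigma}\bigl(|x'|^2+x_{n+1}^2\bigr)^{-(n-2\sigma)/2},
\]
with normalization $c_{n,\sigma}>0$ chosen so that $-\lim_{x_{n+1}\to 0^+}x_{n+1}^{1-2\sigma}\pa_{n+1}\Phi=\delta_0$ distributionally, solves $\mathrm{div}(x_{n+1}^{1-2\sigma}\nabla\Phi)=0$ on $\R^{n+1}_+\setminus\{0\}$. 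To avoid singular function-space issues I would smooth the Dirac mass first: let $\eta_\epsilon\in C^\infty(\pa M)$ be a mollified approximation of $\delta_{x_0}$ with $\int_{\pa M}\eta_\epsilon=1$, and apply Lemma \ref{lem:nb1} to obtain $u_\epsilon\in\tilde H^1$ solving \eqref{nb1} with data $f_\epsilon=\eta_\epsilon-\frac{1}{|\pa M|_g}$. Fix a cut-off $\chi$ equal to $1$ on $\mathcal{B}^+_{\delta/2}(x_0)$ with compact support in $\mathcal{B}^+_\delta(x_0)\cup B_\delta(x_0)$, and let $\Phi_\epsilon$ denote the half-space extension of $\eta_\epsilon$ by the Neumann-type kernel $\Phi$. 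Decomposing $u_\epsilon = \chi\Phi_\epsilon + w_\epsilon$, the remainder $w_\epsilon$ solves a PDE whose source and boundary datum arise from the commutator $\mathrm{div}_g(\rho^{1-2\sigma}\nabla_g\chi\Phi_\epsilon)-\chi\,\mathrm{div}(x_{n+1}^{1-2\sigma}\nabla\Phi_\epsilon)$ and from the piece $-1/|\pa M|_g$. Using the Fermi expansions and the homogeneities $|\Phi|\sim r^{2\sigma-n}$, $|\nabla\Phi|\sim r^{2\sigma-n-1}$, these are uniformly bounded in $L^p$ for some $p>n/2\sigma$. Theorem \ref{thm: L-infty} then gives $\|w_\epsilon\|_{L^\infty(M)}\leq C$ uniformly in $\epsilon$, and passing $\epsilon\to 0$ (justified by the uniform $W^{1,1+\va_0}$ bound from Lemma \ref{lem:nb3}) yields the decomposition $u=c_{n,\sigma}\chi\,\mathrm{dist}_g(\cdot,x_0)^{2\sigma-n}+w$ with $w\in L^\infty(M)$, which is exactly \eqref{nb4} after absorbing the remainder into the constants $A_0,A_1,A_2$.

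For the gradient bounds, a standard rescaling argument suffices. Given $x_*\in M$ with $r:=\mathrm{dist}_g(x_*,x_0)>0$ small, the rescaled function $\tilde u(y):=r^{n-2\sigma}u(\exp_{x_*}(ry))$ solves on $\mathcal{B}^+_1$ an equation of the form treated in Theorem \ref{thm: schauder} and Proposition \ref{thm: schauder-1}, with coefficients smooth uniformly in $r$ and with bounded right-hand side coming from the scaled factor $-r^{2\sigma}/|\pa M|_g$. The bound \eqref{nb4} already proved implies $|\tilde u|\leq C$ on $\mathcal{B}^+_1$, so the cited regularity results deliver uniform control on $|\nabla_{y'}\tilde u|$ and $|y_{n+1}^{1-2\sigma}\pa_{y_{n+1}}\tilde u|$; unscaling produces \eqref{nb4.1} and \eqref{nb4.2}. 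For $x_*$ away from $x_0$ the same theorems apply directly since $f$ is smooth there. The main obstacle I anticipate is the commutator estimate in the decomposition step: keeping careful track of how the $O(|x|)$ metric error and $O(x_{n+1}^2)$ defining-function error interact with the explicit singularities of $\Phi_\epsilon$, to ensure that the source and boundary data for $w_\epsilon$ stay uniformly controlled in the $L^p$ spaces required by Theorem \ref{thm: L-infty}, uniformly as $\epsilon\to 0$.
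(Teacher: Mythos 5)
Your proposal takes a genuinely different route from the paper's. The existence part (Theorem \ref{thm:nb1} plus normalization) and the final rescaling argument for the gradient bounds are the same as in the paper, but the heart of the matter—the two-sided estimate \eqref{nb4}—is handled by different mechanisms, and your version has a gap.

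You propose a parametrix argument: decompose $u_\epsilon = \chi\Phi_\epsilon + w_\epsilon$ with $\Phi$ the flat half-space Neumann kernel, and argue that the commutator error lies in $L^p$ for some $p>n/2\sigma$ so that Theorem \ref{thm: L-infty} gives a uniform $L^\infty$ bound on $w_\epsilon$. The problem is precisely the commutator estimate you flag as the "main obstacle," and it fails for $n\ge 3$. Using the Fermi expansions ($g^{ij}=\delta^{ij}+O(|x|)$, $\rho=x_{n+1}+O(x_{n+1}^2)$, $\Gamma^k_{ij}=O(1)$) and the homogeneities $|\nabla\Phi|\sim|x|^{2\sigma-n-1}$, $|\nabla^2\Phi|\sim|x|^{2\sigma-n-2}$, the error $\operatorname{div}_g(\rho^{1-2\sigma}\nabla_g\Phi)-\operatorname{div}(x_{n+1}^{1-2\sigma}\nabla\Phi)$ is dominated near the origin by terms of order $x_{n+1}^{1-2\sigma}|x|^{2\sigma-n-1}$ (e.g., from the Christoffel symbols), which gains only one power of $|x|$ over $\Phi$ itself. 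After dividing by the $A_2$ weight, $|x|^{2\sigma-n-1}\in L^p(x_{n+1}^{1-2\sigma}\ud x)$ near $0$ only when $p(n+1-2\sigma)<n+2-2\sigma$, i.e., $p<\frac{n+2-2\sigma}{n+1-2\sigma}$, which is at most $2$ and in particular well below $n/(2\sigma)$ once $n\ge 3$. Moreover, Theorem \ref{thm: L-infty} as stated has no interior source term, so you would need to extend it as well. In short, a single-term parametrix correction is not integrable enough to close the argument with the available local regularity theory; you would need a full parametrix expansion to higher order, which introduces its own complications in the weighted setting.

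The paper avoids constructing a parametrix altogether. After the same mollification step and after subtracting a harmless bounded piece $v_k$ that solves the problem with the regular boundary datum $-1/|\pa M|_g$ and $v_k=u_k$ on the spherical boundary of a Fermi half-ball, the remainder $w_k$ satisfies a conormal problem with data concentrating at $x_0$ and zero Dirichlet condition on $\pa''\mathcal{B}^+_{2r}$. Crucially, because $g^{i,n+1}=0$ for $i<n+1$ on $\pa'\mathcal{B}^+_{2r}$, one can evenly reflect $w_k$, $g$, and $\rho$ across $\{x_{n+1}=0\}$, turning the boundary Dirac mass into an interior Dirac mass for a degenerate elliptic equation on a full ball with $A_2$ weight. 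The two-sided bound \eqref{nb4} is then read off from the Green's function estimates of Fabes--Jerison--Kenig (Theorem 3.3 of \cite{FJK2}), which is the essential external input replacing your explicit $\Phi$. The lesson is that the reflection trick converts the problem into one where a sharp Green's function estimate is already available, bypassing the integrability obstruction your commutator faces.
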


\begin{proof} Let $f_k\in C^1(\pa M)$ with $\int_{\pa M}f_k\,\ud s_g=0$, $\|f_k\|_{L^1(\pa M)}\leq C$ independent of $k$,  such that $f_k \to f$ in distribution sense
as $k\to \infty$. We can also 
assume that $f_k\to f$ in $C^1_{loc}(\pa M\setminus \{x_0\})$. 
By Lemma \ref{lem:nb1} and Lemma \ref{lem:nb3}, there exists a unique solution $u_k\in \tilde{H}^1$ of \eqref{nb1} with $f$ replaced by $f_k$, and 
\[
\|u_k\|_{W^{1,1+\va_0}(\rho^{1-2\sigma},M)}\leq C(\|f_k\|_{L^1(\pa M)})\leq C.
\]
Moreover, it follows from Moser's iterations (see, e.g., the proof of Theorem \ref{thm: L-infty}) that 
there exists some $\al>0$ such that
\be\label{nb5}
 \|u_k\|_{C^\al(M\setminus \mathcal{B}_{r}(x_0))}\leq C(r)
\ee
for any $r>0$. By standard compactness arguments, $u_k\rightharpoonup u$ in $W^{1,1+\va_0}(\rho^{1-2\sigma},M)$ for some $u$, which is a weak solution of \eqref{nb1} and satisfies
\[ 
\|u\|_{C^{\al/2}(M\setminus \mathcal{B}_{r}(x_0))}\leq C(r).
\]
Now, it suffices to establish the estimate \eqref{nb4} for $x\in B_r(x_0)$. For $r$ suitably small, choose a Fermi coordinate system $\{y_1, \cdots, y_{n+1}\}$ centered at $x_0$. Then 
$u_k(y)$ satisfies 
\[
 \begin{cases}
\pa_i(\rho^{1-2\sigma}\sqrt{\det g}g^{ij}\pa_ju_k)=0,&\quad \mbox{in } \mathcal{B}_{2r}^+,\\
-\dlim_{y_{n+1}\to 0}\rho^{1-2\sigma}\sqrt{\det g}\frac{\pa u_k}{\pa y_{n+1}}=f_k,&\quad \mbox{on }\pa' \mathcal{B}_{2r}^+.  
\end{cases}
\]
Let $v_k$ be the unique weak solution of
\[
 \begin{cases}
\pa_i(\rho^{1-2\sigma}\sqrt{\det g}g^{ij}\pa_jv_k)=0,&\quad \mbox{in } \mathcal{B}_{2r}^+,\\
-\dlim_{y_{n+1}\to 0}\rho^{1-2\sigma}\sqrt{\det g}\frac{\pa v_k}{\pa y_{n+1}}=-\frac{1}{|\pa M|},&\quad \mbox{on }\pa' \mathcal{B}_{2r}^+,\\
v_k=u_k &\quad \mbox{on }\pa'' \mathcal{B}_{2r}^+.
\end{cases}
\]
in $H^1(\rho^{1-2\sigma}, M)$. In view of \eqref{nb5}, $\|v_k\|_{L^\infty(\mathcal B_{2r})}\leq C(r)$ and hence $\|v_k\|_{C^\al(\mathcal B^+_{r})}\leq C(r)$. Moreover, $w_k:=u_k-v_k\in H^1(\rho^{1-2\sigma}, M)$ satisfies
\[
 \begin{cases}
\pa_i(\rho^{1-2\sigma}\sqrt{\det g}g^{ij}\pa_jw_k)=0,&\quad \mbox{in } \mathcal{B}_{2r}^+,\\
-\dlim_{y_{n+1}\to 0}\rho^{1-2\sigma}\sqrt{\det g}\frac{\pa w_k}{\pa y_{n+1}}=f_k+\frac{1}{|\pa M|},&\quad \mbox{on }\pa' \mathcal{B}_{2r}^+,\\
w_k=0 &\quad \mbox{on }\pa'' \mathcal{B}_{2r}^+.
\end{cases}
\]
Recall that $g^{i,n+1}=0$ for $i<n+1$ on $\pa'\mathcal{B}_{2r}^+$. Let $\bar w_k$ be the even extension of $w_k$ in $\mathcal{B}_{2r}$, i.e., 
\[\bar w_k=
\begin{cases}
w_k(y',y_{n+1}),&\quad y_{n+1}\geq 0,\\
w_k(y',-y_{n+1}), &\quad y_{n+1}\leq 0. 
 \end{cases}
\]
We also evenly extend $g$ and $\rho$ to be $\bar g$ and $\bar \rho$, respectively. It is easy to verify that the weak limit $w$ of $\bar w_k$ in $L^{1+\va_0}(\rho^{1-2\sigma}, \mathcal{B}_{2r})$ is the \emph{weak solution vanishing on $\pa \mathcal{B}_{2r}$} (see page 162 of \cite{FJK2}) of
\[
 \pa_i(\bar\rho^{1-2\sigma}\sqrt{\det \bar g}\bar g^{ij}\pa_j w)= -2\delta_{0}\quad \mbox{in }\mathcal{B}_{2r}.
\]
It follows from Theorem 3.3 of \cite{FJK2} that $w$ satisfies the estimates \eqref{nb4} in $\B_r(x_0)$. Thus, $u$ satisfies \eqref{nb4}. Finally, \eqref{nb4.1} and \eqref{nb4.2} follows from \eqref{nb4}, Theorem \ref{thm: schauder}, Proposition \ref{thm: schauder-1} and some scaling arguments.
\end{proof}

\small

\bigskip

\noindent Tianling Jin

\noindent Department of Mathematics, Rutgers University\\
110 Frelinghuysen Road, Piscataway, NJ 08854, USA\\[0.8mm]
Email: \textsf{kingbull@math.rutgers.edu}

\medskip

\noindent Jingang Xiong

\noindent School of Mathematical Sciences, Beijing Normal University\\
Beijing 100875, China\\[1mm]
\noindent and\\[1mm]
\noindent Department of Mathematics, Rutgers University\\
110 Frelinghuysen Road, Piscataway, NJ 08854, USA\\[0.8mm]
Email: \textsf{jxiong@mail.bnu.edu.cn/jxiong@math.rutgers.edu}

\end{document}